\numberwithin{equation}{section}
\newtheorem{theorem}{Theorem}[section]
\newtheorem{corollary}[theorem]{Corollary}
\newtheorem{proposition}[theorem]{Proposition}
\theoremstyle{definition}
\newtheorem{definition}[theorem]{Definition}
\newtheorem{remark}[theorem]{Remark}
\DeclareMathOperator{\re}{Re}
\DeclareMathOperator{\sign}{sign}
\DeclareMathOperator{\ind}{\mathds{1}}
\newcommand{\op}{L}
\newcommand{\dn}{K}
\newcommand{\pr}{\mathds{P}}
\newcommand{\ex}{\mathds{E}}
\newcommand{\exc}{\mathbf{n}}
\newcommand{\exit}{\mathbf{N}}
\newcommand{\C}{\mathds{C}}
\newcommand{\R}{\mathds{R}}
\newcommand{\ph}{\varphi}
\newcommand{\eps}{\varepsilon}
\newcommand{\thet}{\vartheta}
\newcommand{\qv}[1]{\langle #1 \rangle}
\renewcommand{\le}{\leqslant}
\renewcommand{\ge}{\geqslant}
\newcommand{\formula}[2][nolabel]%
{%
 \ifthenelse{\equal{#1}{nolabel}}%
 {\begin{align*} #2 \end{align*}}%
 {%
  \ifthenelse{\equal{#1}{}}%
  {\begin{align} #2 \end{align}}%
  {\begin{align} \label{#1} #2 \end{align}}%
 }%
}
\newcommand{\ignore}[1]{}
\begin{document}

\title[Boundary traces of shift-invariant diffusions in half-plane]{Boundary traces of shift-invariant \\ diffusions in half-plane}
\author{Mateusz Kwaśnicki}
\thanks{Work supported by the Polish National Science Centre (NCN) grant no.\@ 2015/19/B/ST1/01457}
\address{Mateusz Kwaśnicki \\ Faculty of Pure and Applied Mathematics \\ Wrocław University of Science and Technology \\ ul. Wybrzeże Wyspiańskiego 27 \\ 50-370 Wrocław, Poland}
\email{mateusz.kwasnicki@pwr.edu.pl}
\date{\today}
\keywords{diffusion; Lévy process; trace process; Dirichlet-to-Neumann operator; elliptic equation; non-local operator; Krein's string}
\subjclass[2010]{60J60 60J75 (35J25 35J70 35R11 47G20)}

\begin{abstract}
We study boundary traces of shift-invariant diffusions: two-dimensional diffusions in the upper half-plane $\R \times [0, \infty)$ (or in $\R \times [0, R)$) invariant under horizontal translations. We prove that the corresponding trace processes are Lévy processes with completely monotone jumps, and, conversely, every Lévy process with completely monotone jumps is a boundary trace of some shift-invariant diffusion. Up to some natural transformations of space and time, this correspondence is bijective. We also reformulate this result in the language of additive functionals of the Brownian motion in $[0, \infty)$ (or in $[0, R)$), and Brownian excursions. Our main tool is the recent extension of Krein's spectral theory of strings, due to Eckhardt and Kostenko.
\end{abstract}

\maketitle

%
%

\section{Introduction}
\label{sec:intro}

Building upon recent work~\cite{ek} of Eckhardt and Kostenko, the author identified in~\cite{k:hx} Dirichlet-to-Neumann operators corresponding to appropriate elliptic equations in the half-plane with a class of non-local operators with completely monotone kernels. Here we apply this result to study boundary traces of two-dimensional diffusions in a half-plane $\R \times (0, \infty)$ which are invariant under translations parallel to the boundary. Roughly speaking, we prove that such traces are real-valued Lévy processes with completely monotone jumps, and conversely, to every Lévy process with completely monotone jumps there corresponds a (unique, up to some natural transformations) diffusion in the half-plane; see Theorem~\ref{thm:main}. Our result can be rephrased in the language of functionals of Brownian local times or excursions; see Corollaries~\ref{cor:loc} and~\ref{cor:exc}. These results extend a number of previous works in the area, discussed after the statement of our main results.

We begin with the definition of the class of diffusions considered in this paper.

\begin{definition}
\label{def:diff}
Suppose that $(X(t), Y(t))$ is a stochastic process with values in $\R \times [0, R)$ for some $R \in (0, \infty]$. Let $\pr^{(x,y)}$ denote the probability corresponding to the process started at $(X(0), Y(0)) = (x, y)$. We say that $(X(t), Y(t))$ is a \emph{shift-invariant diffusion} if it is a continuous strong Markov process in $\R \times [0, R)$ with the following properties:
\begin{enumerate}[label=(\alph*)]
\item the law of $(p + X(t), Y(t))$ (under $\pr^{(x, y)}$) is the same as the law of $(X(t), Y(t))$ (under $\pr^{(p + x, y)}$) for every $p \in \R$;
\item if the life-time of the process $Y(t)$ is finite, then $Y(t)$ approaches $R$ at its life-time.
\end{enumerate}
We say that $(X(t), Y(t))$ is a \emph{regular shift-invariant diffusion} if in addition:
\begin{enumerate}[resume*]
\item $Y(t)$ is the Brownian motion (reflected at $0$ and, if $R \in (0, \infty)$, killed at $R$);
\item $X(t)$ is a local martingale.
\end{enumerate}
\end{definition}

Note that every shift-invariant diffusion $(X(t), Y(t))$ is a continuous Markov additive process, with the regulator part $Y(t)$ and the additive part $X(t)$.

Recall that a Lévy process is a one-dimensional stochastic process $Z(t)$ started at $0$, with independent and stationary increments, and càdlàg paths. We allow $Z(t)$ to be killed at a uniform rate (or, equivalently, run only up to an independent, exponentially distributed random life-time). We say that $Z(t)$ has \emph{completely monotone jumps} if the Lévy measure of $Z(t)$ (which describes the intensity of jumps of given magnitude in the paths of $Z(t)$) has a density function $\nu(x)$ on $\R \setminus \{0\}$, and $\nu(x)$ and $\nu(-x)$ are completely monotone functions of $x \in (0, \infty)$.

The following is the main result of this paper.

\begin{theorem}
\label{thm:main}
If $(X(t), Y(t))$ is a shift-invariant diffusion in $\R \times [0, R)$ as in Definition~\ref{def:diff}, $L(t)$ is the local time of $Y(t)$ at $0$ and $L^{-1}(t)$ denotes the right-continuous generalised inverse of $L(t)$, then the trace $Z(t) = X(L^{-1}(t))$ left by $(X(t), Y(t))$ on the boundary $\R \times \{0\}$ is a Lévy process with completely monotone jumps.

Conversely, if $Z(t)$ is a Lévy process with completely monotone jumps, then there is a unique regular shift-invariant diffusion $(X(t), Y(t))$ as in Definition~\ref{def:diff} such that $Z(t)$ is equal in law to the trace $X(L^{-1}(t))$ left by $(X(t), Y(t))$ on the boundary $\R \times \{0\}$.
\end{theorem}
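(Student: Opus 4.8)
The plan is to treat the two directions separately, letting Itô's excursion theory handle the ``Lévy process'' claim and letting the identification of Dirichlet-to-Neumann operators from~\cite{k:hx} handle the ``completely monotone jumps'' claim.

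For the forward direction, I would first record that a shift-invariant diffusion $(X(t), Y(t))$ is a continuous Markov additive process: the regulator $Y(t)$ is an autonomous one-dimensional diffusion on $[0, R)$, and by property~(a) the conditional law of the increments of $X(t)$ depends on the past only through $Y(t)$. I would then apply excursion theory to $Y(t)$ at the boundary point $0$: the inverse local time $L^{-1}(t)$ is a subordinator, and it slices the trajectory into the zero set of $Y$ together with a Poisson family of excursions of $Y$ away from $0$. By the strong Markov property applied on the zero set of $Y$ and by shift-invariance~(a), the increments of $X$ accumulated over disjoint slices are independent and stationary. Hence the trace $Z(t) = X(L^{-1}(t))$ has stationary independent increments and càdlàg paths, i.e.\ it is a Lévy process; its continuous part comes from the motion of $X$ while $Y = 0$, and its jumps are produced exactly by those excursions during which $X$ undergoes a net displacement, so that the Lévy measure is the push-forward of the excursion measure under this displacement.

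To compute the exponent of $Z(t)$ and establish complete monotonicity, I would pass to the Fourier transform in the $x$-variable. Shift-invariance means the generator of $(X, Y)$ commutes with horizontal translations, so for each frequency $\xi$ it reduces to a one-dimensional Krein-string type operator in the $y$-variable on $[0, R)$. The Fourier exponent $\psi(\xi)$ of $Z(t)$ is then the symbol of the Dirichlet-to-Neumann operator associated with this family: solving the corresponding ordinary differential equation with the boundary behaviour at $R$ dictated by property~(b) and reading off the normal derivative at $0$ yields $\psi(\xi)$. The main result of~\cite{k:hx}, built on the spectral theory of~\cite{ek}, identifies precisely the Dirichlet-to-Neumann operators arising from such equations with the non-local operators having completely monotone kernels; equivalently, the admissible exponents $\psi(\xi)$ are exactly the Fourier exponents of Lévy processes with completely monotone jumps. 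This closes the forward implication.

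For the converse I would run the correspondence backwards. Given a Lévy process $Z(t)$ with completely monotone jumps, its exponent $\psi(\xi)$ lies in the class characterised in~\cite{k:hx}, so there is a one-dimensional Krein-string datum whose Dirichlet-to-Neumann symbol equals $\psi(\xi)$; the normalisation~(c)--(d), requiring $Y$ to be reflected Brownian motion and $X$ a local martingale, pins this datum down uniquely. I would then assemble $(X(t), Y(t))$ as the diffusion generated by the corresponding shift-invariant operator, verify properties~(a)--(d) of Definition~\ref{def:diff}, and use the first two paragraphs to check that its boundary trace equals $Z(t)$ in law; uniqueness of the regular diffusion follows from the uniqueness in the~\cite{k:hx} correspondence together with the rigidity imposed by~(c)--(d). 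I expect the principal obstacle, in each direction, to be the precise matching of the trace exponent with the Dirichlet-to-Neumann symbol and the invocation of the delicate theory of~\cite{ek,k:hx}; the excursion-theoretic ``Lévy process'' step is comparatively routine.
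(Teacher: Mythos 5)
Your high-level architecture --- Fourier transform in the $x$-variable, reduction to a one-parameter family of Krein-string ODEs, and the characterisation from~\cite{k:hx} built on~\cite{ek} --- is the same as the paper's, and your excursion-theoretic argument for the bare Lévy property of the trace is sound (the paper uses exactly this picture, via exit systems, when computing the Lévy measure in Corollary~\ref{cor:exc}). But the step you declare to be the heart of the matter and then treat as an invocation --- ``the Fourier exponent $\psi(\xi)$ of $Z(t)$ is the symbol of the Dirichlet-to-Neumann operator'' --- is precisely the step the paper says is \emph{not} available from the folklore identification in this generality: when $a(dy)$ is only a locally finite measure and $b(y)$ only locally square-integrable, the formal generator $\op = \tfrac{1}{2}\bigl(a(dy)+(b(y))^2\bigr)\partial_{xx} + b(y)\partial_{xy} + \tfrac{1}{2}\partial_{yy}$ is already problematic to define, and none of the known trace/Dirichlet-to-Neumann results covers Definition~\ref{def:diff}. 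Asserting the identification does not prove it. The paper's actual mechanism is a pathwise martingale computation that bypasses generators entirely: it sets $\hat X(t) = \ind_{\{t<T_R\}}\ex_W e^{i\xi X(t)} = e^{i\xi B(t) - \xi^2 A(t)/2}$, multiplies by $\Phi(t)=\ph(Y(t))$ where $\ph$ solves the ODE of Theorem~\ref{thm:ode}\ref{thm:ode:a}, and uses the Itô--Tanaka formula (Corollary~\ref{cor:it}), integration by parts and explicit integrability estimates to show that $\hat X(t)\Phi(t) + \psi(\xi)\int_0^t \hat X(s)\,dL_0(s)$ is a martingale; evaluating at $t = L_0^{-1}(u)$ and taking expectations yields $\ex\hat Z(u) = e^{-u\psi(\xi)}$, hence $\Psi=\psi$, and Theorem~\ref{thm:ode}\ref{thm:ode:b} gives complete monotonicity. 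Your proposal contains no substitute for this computation.

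A second gap: before any ODE can even be written down, you must extract the data $(R, a(dy), b(y))$ from the abstract diffusion of Definition~\ref{def:diff}, and your proposal never does this. The paper's reduction (Section~\ref{sec:two:red}) is essential in both directions: change of scale and change of time to make $Y(t)$ the reflected Brownian motion, Çinlar's theorem on continuous Markov additive processes (Proposition~\ref{prop:map}) to write $X(t)-X(0) = W(A(t))+B(t)$ with $W$ an independent Brownian motion, shearing to remove the $\tau(Y(t))$ component so that $X(t)$ becomes a local martingale, and the Tanaka-type representations (Corollaries~\ref{cor:paf} and~\ref{cor:af}) to obtain $A(t)=\int_{[0,R)} L_y(t)\,a(dy)$ and $B(t)=\int_0^t b(Y(s))\,d\dot Y(s)$. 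Without this, the claim that ``for each frequency $\xi$ the generator reduces to a Krein-string type operator with data $(a,b)$'' has no content, and the uniqueness assertion in your converse is unsupported: uniqueness follows from the uniqueness in Theorem~\ref{thm:ode}\ref{thm:ode:c} only after regular shift-invariant diffusions have been faithfully parametrised by triples $(R, a(dy), b(y))$, which is exactly what the reduction establishes.
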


Observe that a regular shift-invariant diffusion has infinite life-time if and only if $R = \infty$. More precisely, with a proper normalisation of the local time $L(t)$, when $(X(t), Y(t))$ is a regular shift-invariant diffusion, then the corresponding trace $Z(t)$ is killed at a uniform rate equal to $1 / R$.

Theorem~\ref{thm:main} can be rephrased in terms of functionals of the one-dimensional Brownian motion in $[0, \infty)$, or excursions of this process.

\begin{corollary}
\label{cor:loc}
Suppose that $R \in (0, \infty]$, $a(dy)$ is a locally finite measure on $[0, R)$, and $b(y)$ is a locally square-integrable function on $[0, R)$. Let $X(t)$ and $Y(t)$ be independent Brownian motions in $\R$ and $[0, \infty)$, respectively, and suppose that $Y(t)$ is reflected at $0$. Let $L_y(t)$ denote the continuous local time of $Y(t)$ at $y$, and let $T_R$ be the hitting time of $R$ for $Y(t)$ (if $R = \infty$, we understand that $T_R = \infty$). Finally, let $Z(t)$ be a stochastic process with lifetime $L_0(T_R)$, given for $t < L_0(T_R)$ by
\formula[eq:thm:loc]{
 Z(t) & = X\biggl(\int_{[0, R)} L_y(L_0^{-1}(t)) a(dy)\biggr) + \int_0^{L_0^{-1}(t)} b(Y(s)) d\dot{Y}(s) ;
}
here $L_0^{-1}(t)$ is the right-continuous generalised inverse of $L_0(t)$, and $\dot{Y}(t) = Y(t) - \tfrac{1}{2} L_0(t)$ is the martingale part in the canonical decomposition of the semi-martingale $Y(t)$. Then $Z(t)$ is a Lévy process with completely monotone jumps.

Conversely, if $Z(t)$ is a Lévy process with completely monotone jumps, then there is a unique number $R \in (0, \infty]$, a unique locally finite measure $a$ on $[0, R)$, and a unique locally square-integrable function $b$ on $[0, R)$ such that $Z(t)$ is equal in law to the process described by the right-hand side of~\eqref{eq:thm:loc}.
\end{corollary}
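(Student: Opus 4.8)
The plan is to deduce the corollary from Theorem~\ref{thm:main} by matching the additive part of a regular shift-invariant diffusion with the explicit functional on the right-hand side of~\eqref{eq:thm:loc}. For the first (direct) assertion, given $R$, $a$ and $b$ I would set $\dot Y(t) = Y(t) - \tfrac{1}{2} L_0(t)$ (the martingale part of the reflected Brownian motion $Y$, itself a standard Brownian motion, with $\langle \dot Y\rangle(t) = t$) and define the two-dimensional process $(X^*(t), Y(t))$ by $X^*(t) = X\bigl(\int_{[0,R)} L_y(t)\,a(dy)\bigr) + \int_0^t b(Y(s))\,d\dot Y(s)$, where $Y$ is reflected at $0$ and killed at $R$. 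This $(X^*, Y)$ is a regular shift-invariant diffusion in the sense of Definition~\ref{def:diff}: continuity is clear, $Y$ is by construction the reflected Brownian motion killed at $R$, and $X^*$ is a sum of a time-changed Brownian motion and a stochastic integral against the continuous martingale $\dot Y$, hence a continuous local martingale. Shift-invariance holds because the post-$s$ increments of $X^*$ are measurable functions of the post-$s$ trajectory of $Y$ and of the increments of the independent Brownian motion $X$, not of the current value of $X^*$. The only nonroutine point is the strong Markov property, which follows from the fact that $(X^*, Y)$ is a Markov additive process: conditionally on $Y$, the increment of $X^*$ over $[s, s+u]$ is centred Gaussian with variance $\int_{[0,R)} (L_y(s+u) - L_y(s))\,a(dy) + \int_s^{s+u} b(Y(r))^2\,dr$, a functional of the increment of $Y$ alone. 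The forward part of Theorem~\ref{thm:main} then applies with $L = L_0$, and since the trace $X^*(L_0^{-1}(t))$ is exactly the process in~\eqref{eq:thm:loc}, it is a Lévy process with completely monotone jumps.

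For the converse, given $Z$ with completely monotone jumps, Theorem~\ref{thm:main} supplies a unique regular shift-invariant diffusion $(X(t), Y(t))$ whose trace $X(L^{-1}(t))$ is equal in law to $Z$; the task is to represent its additive part $X$ in the form appearing in~\eqref{eq:thm:loc}. Since $X$ is a continuous local martingale and an additive functional, both its quadratic variation $\langle X\rangle(t)$ and its covariation $\langle X, \dot Y\rangle(t)$ are continuous additive functionals of the reflected Brownian motion $Y$, so by the standard correspondence between such functionals and their Revuz measures one has $\langle X\rangle(t) = \int_{[0,R)} L_y(t)\,m(dy)$ and $\langle X, \dot Y\rangle(t) = \int_{[0,R)} L_y(t)\,c(dy)$ for a nonnegative locally finite measure $m$ and a signed measure $c$. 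Because $\langle \dot Y\rangle(t) = t = \int_{[0,R)} L_y(t)\,dy$ has Lebesgue Revuz measure, the Kunita--Watanabe inequality forces $c$ to be absolutely continuous, $c(dy) = b(y)\,dy$ with $b(y)^2\,dy \le m(dy)$; in particular $b$ is locally square-integrable and $a(dy) := m(dy) - b(y)^2\,dy$ is a nonnegative locally finite measure on $[0,R)$.

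I would then decompose $X(t) = \int_0^t b(Y(s))\,d\dot Y(s) + X^\perp(t)$, where $X^\perp$ is a continuous local martingale with $\langle X^\perp, \dot Y\rangle = 0$ and $\langle X^\perp\rangle(t) = \int_{[0,R)} L_y(t)\,a(dy)$. Applying Knight's multivariate extension of the Dambis--Dubins--Schwarz theorem to the orthogonal pair $(X^\perp, \dot Y)$ produces \emph{independent} Brownian motions after the respective time changes; writing $X^\perp(t) = X(\langle X^\perp\rangle(t))$ for the resulting Brownian motion $X$, its independence from $\dot Y$ — and hence from $Y$, which is adapted to $\dot Y$ via $\dot Y = Y - \tfrac{1}{2} L_0$ — yields precisely $X(t) = X\bigl(\int_{[0,R)} L_y(t)\,a(dy)\bigr) + \int_0^t b(Y(s))\,d\dot Y(s)$. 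Substituting $t = L_0^{-1}(t)$ identifies the trace with the right-hand side of~\eqref{eq:thm:loc}. Uniqueness of $R$, $a$ and $b$ follows from the uniqueness of $(X, Y)$ in Theorem~\ref{thm:main} together with the uniqueness of the Revuz measures: $b$ is determined $dy$-almost everywhere as the density of $c$, then $a = m - b^2\,dy$, and $R$ is read off from the lifetime.

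I expect the main obstacle to be this converse representation step, namely upgrading the mere orthogonality $\langle X^\perp, \dot Y\rangle = 0$ to genuine probabilistic independence of the driving Brownian motion $X$ from $Y$, for which Knight's theorem is the decisive tool, and controlling the regularity of $a$ and $b$ through the Kunita--Watanabe inequality so that they land in the stated classes (locally finite measure, locally square-integrable function). A secondary technical point, in the direct assertion, is the careful verification that the constructed process $(X^*, Y)$ is a strong Markov process and therefore a legitimate shift-invariant diffusion to which Theorem~\ref{thm:main} may be applied.
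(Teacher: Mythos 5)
Your direct half is essentially the paper's own argument: the paper proves Corollary~\ref{cor:loc} by observing that the process in~\eqref{eq:thm:loc} is literally the representation~\eqref{eq:xt:zt} with which the whole proof of Theorem~\ref{thm:main} operates, so both assertions follow from that proof. Your extra verification that $(X^*,Y)$ is a regular shift-invariant diffusion is a legitimate (and slightly more careful) way of making the appeal to Theorem~\ref{thm:main} rigorous.

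The converse is where you diverge from the paper, and where there is a genuine gap. You define $b$ and $a$ through Revuz measures of $\langle X\rangle(t)$ and $\langle X,\dot{Y}\rangle(t)$, asserting that these brackets are continuous additive functionals \emph{of $Y$} ``since $X$ is a continuous local martingale and an additive functional''. That premise is false: the additive part $X$ of a Markov additive process is \emph{not} an additive functional of $Y$, because it is not adapted to the natural filtration of $Y$ --- indeed, in the very representation you are trying to establish, $X$ contains a Brownian motion independent of $Y$, so it cannot be $Y$-adapted. Consequently the brackets $\langle X\rangle$ and $\langle X,\dot{Y}\rangle$ are, a priori, only adapted to the joint filtration of $(X,Y)$, and the Revuz correspondence of Corollary~\ref{cor:paf}, which is stated for additive functionals of $Y$, cannot be invoked. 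Proving that these brackets are nonetheless indistinguishable from $Y$-adapted additive functionals is precisely the nontrivial content of Çinlar's structure theorem for continuous Markov additive processes (Proposition~\ref{prop:map}), which is exactly where the paper begins its reduction in Section~\ref{sec:two:red}. And once Proposition~\ref{prop:map} is invoked, one has $X(t)=X(0)+W(A(t))+B(t)$ with $W$ an independent Brownian motion and $A$, $B$ additive functionals of $Y$ outright; Corollaries~\ref{cor:paf} and~\ref{cor:af} (together with the local-martingale property of $X$, which forces the $\tau$-term to vanish) then give~\eqref{eq:at} and~\eqref{eq:bt}, and your appeal to Knight's theorem becomes superfluous. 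Knight's theorem cannot substitute for Proposition~\ref{prop:map} here: it would produce a Brownian motion independent of $\dot{Y}$ running on the clock $\langle X^{\perp}\rangle$, but without the $Y$-adaptedness of the brackets you can neither define $b$ in the form $b(Y(t))$ nor identify $\langle X^{\perp}\rangle(t)$ with $\int_{[0,R)}L_y(t)\,a(dy)$.

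A secondary difference: you derive uniqueness of $(R,a,b)$ from the uniqueness of the diffusion in Theorem~\ref{thm:main} plus uniqueness of Revuz measures, which again leans on the gapped step. The paper gets uniqueness for free from the analytic side, namely Theorem~\ref{thm:ode}\ref{thm:ode:c} (unique representation of Rogers functions): the proof of the converse of Theorem~\ref{thm:main} constructs the diffusion already in the form~\eqref{eq:thm:loc}, so for the paper there is nothing left to represent.
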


\begin{corollary}
\label{cor:exc}
Suppose that $R \in (0, \infty]$, $a(dy)$ is a locally finite measure on $[0, R)$, and $b(y)$ is a locally square-integrable function on $[0, R)$. Suppose that $\exc(d\eps)$ is the excursion measure for the Brownian motion in $[0, \infty)$. Denote by $\zeta$ the life-time of the Brownian excursion $\eps(t)$, let $L_y(t)$ denote the continuous local time of $\eps(t)$ at $y$, and let $E$ be the collection of all excursions that never reach the level $R$. Finally, let $X(t)$ be the Brownian motion independent from the excursion process $\eps(t)$, and let $\pr_X$ denote the probability corresponding to $X(t)$. Then the measure:
\formula[eq:thm:exc]{
 \nu(dx) & = \exc\biggl( E ; \ex_X \biggl(X\biggl(\int_{(0, R)} L_y(\zeta) a(dy)\biggr) + \int_0^\zeta b(\eps(s)) d\eps(s) \in dx\biggr) \biggr)
}
is a Lévy measure on $\R \setminus \{0\}$, with a density function $\nu(x)$ such that $\nu(x)$ and $\nu(-x)$ are completely monotone functions of $x > 0$.

Conversely, if $\nu(dx)$ is a Lévy measure with a density function as above, then there is a number $R \in (0, \infty]$, a locally finite measure $a$ on $[0, R)$, and a locally square-integrable function $b$ on $[0, R)$ such that $\nu$ has the representation~\eqref{eq:thm:exc}. If we furthermore require that $R = \infty$ and $a(\{0\}) = 0$, then the measure $a(dy)$ is determined uniquely, while the function $b(y)$ is determined up to addition by a constant.
\end{corollary}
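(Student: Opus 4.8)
The plan is to derive this excursion representation directly from the local-time representation of Corollary~\ref{cor:loc} by means of Itô's theory of Brownian excursions. Recall that the excursions of the reflected Brownian motion $Y(t)$ away from $0$ form, after reparametrisation by the local time $L_0(t)$, a Poisson point process on $(0, \infty)$ taking values in the space of excursions, with intensity measure $dt \otimes \exc(d\eps)$; the inverse local time $L_0^{-1}(t)$ is the associated subordinator, and its jumps are exactly the lengths $\zeta$ of the successive excursions. First I would examine how the two terms on the right-hand side of~\eqref{eq:thm:loc} behave under this decomposition. Writing $A(t) = \int_{[0, R)} L_y(t) a(dy)$, the atom at $0$ contributes $a(\{0\}) L_0(t)$, so that $A(L_0^{-1}(t)) = a(\{0\}) t + \int_{(0, R)} L_y(L_0^{-1}(t)) a(dy)$: the first summand is a continuous drift of the time-change (producing, after composition with $X$, the Gaussian part of $Z$), while the second is a pure-jump subordinator whose increment over the excursion $\eps$ equals $\int_{(0, R)} L_y(\zeta) a(dy)$. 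The stochastic integral $\int_0^{L_0^{-1}(t)} b(Y(s)) d\dot Y(s)$ accumulates nothing while $Y(s) = 0$ (there $d\dot Y = 0$) and increases over each excursion by $\int_0^\zeta b(\eps(s)) d\eps(s)$. Hence the jump of $Z$ at the local-time instant of the excursion $\eps$ is $X(\int_{(0, R)} L_y(\zeta) a(dy)) + \int_0^\zeta b(\eps(s)) d\eps(s)$, where the increment of $X$ over a deterministic time of length $s$ is, conditionally on $\eps$, an independent centred Gaussian of variance $s$.

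It then follows from the mapping theorem for Poisson point processes that the jumps of $Z$ again form a Poisson point process, whose intensity is the push-forward of $\exc(d\eps)$, convolved with the independent Gaussian increment of $X$, under this jump map. When $R < \infty$, an excursion reaching level $R$ triggers the killing of $(X, Y)$, and hence of $Z$, rather than a jump; this is precisely why the formula integrates only over the set $E$ of excursions that never reach $R$. Taking $X$ inside via $\ex_X$ therefore yields exactly~\eqref{eq:thm:exc} for the Lévy measure of the jump part of $Z$. Since $Z$ is, by Corollary~\ref{cor:loc}, a Lévy process with completely monotone jumps, this $\nu(dx)$ has the asserted completely monotone density, which proves the direct implication.

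For the converse and the uniqueness statement I would invoke the converse part of Corollary~\ref{cor:loc}. Given a Lévy measure $\nu$ with completely monotone density, let $Z$ be the driftless, Gaussian-free, non-killed Lévy process with jump measure $\nu$; it has completely monotone jumps, so Corollary~\ref{cor:loc} supplies data $(R, a, b)$, and the excursion computation above exhibits $\nu$ in the form~\eqref{eq:thm:exc}. The normalisations isolate the three pieces of information invisible to $\nu$: the killing rate $1/R$ (forcing $R = \infty$), the Gaussian coefficient governed by $a(\{0\})$ (forcing $a(\{0\}) = 0$), and the drift. The last is the reason $b$ is pinned down only up to an additive constant: replacing $b$ by $b + c$ changes each excursion integral by $c(\eps(\zeta) - \eps(0)) = 0$, since every excursion begins and ends at $0$, so it leaves $\nu$ unchanged, while it alters the drift of $Z$ by $-c/2$ through the boundary term $\dot Y(L_0^{-1}(t)) = -\tfrac12 t$. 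With $R = \infty$ and $a(\{0\}) = 0$ fixed, matching $\nu$ is equivalent to matching the jump part of $Z$, so uniqueness of $a$ and of $b$ modulo constants descends from the uniqueness of $(R, a, b)$ in Corollary~\ref{cor:loc}.

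The main obstacle I expect is the rigorous justification of the excursion-level identity, namely verifying that the jumps of the time-changed process $Z$ are genuinely the marks of a Poisson point process with the stated intensity, together with the attendant integrability and compensation bookkeeping: one must check that $\int \min(1, |x|^2) \, \nu(dx) < \infty$ so that~\eqref{eq:thm:exc} really defines a Lévy measure, handle the interaction between the independent Gaussian increments of $X$ and the excursion measure so that $\ex_X$ may legitimately be taken under $\exc$, and treat with care the killing mechanism when $R < \infty$, where the excursions reaching $R$ must be excised from the jump intensity and reassigned to the lifetime of $Z$.
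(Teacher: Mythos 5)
Your proposal is correct and follows essentially the same route as the paper: identify $Z$ with the boundary trace via Corollary~\ref{cor:loc} and Theorem~\ref{thm:main}, recognise each jump of $Z$ as $W\bigl(\int_{(0,R)} L_y(\zeta)\,a(dy)\bigr) + \int_0^\zeta b(\eps(s))\,d\eps(s)$ for the corresponding excursion marked by an independent Brownian increment, push the excursion measure $\exc$ forward (restricted to $E$ when $R < \infty$, since excursions reaching $R$ produce killing rather than a jump), and settle uniqueness by observing that adding a constant to $b$ changes only the drift. The only difference is technical scaffolding: where you invoke the Poisson point process marking and mapping theorems, the paper runs the same computation through the Lévy--Itô theorem combined with the exit-system formula of Maisonneuve/Sharpe, conditioning on the path of $Y$ to express the exit system of $(X(t),Y(t))$ in terms of $\exc$.
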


\begin{remark}
If $b(y)$ has locally bounded variation in $[0, R)$, then the Itô integral in Corollary~\ref{cor:loc} can be expressed in terms of the local times $L_y(t)$:
\formula{
 Z(u) & = X\biggl(\int_{[0, R)} L_y(L_0^{-1}(t)) a(dy)\biggr) - \frac{1}{2} \int_0^{L_0^{-1}(t)} L_y(t) db(y) ;
}
see Corollary~\ref{cor:it} below. In the general case, the Itô integral in Corollary~\ref{cor:loc} can be expressed as the generalised integral of the local time:
\formula{
 \int_0^t b(Y(s)) d\dot{Y}(s) & = B(Y(t)) - B(Y(0)) + \frac{1}{2} \int_{(0, R)} b(t) d_y L_y(t) ,
}
where $B(y) = \int_0^y b(x) dx$; we refer to~\cite{rogers:walsch,walsch} for a rigorous discussion. Since $Y(L_0^{-1}(t)) = Y(0) = 0$, we have $B(Y(t)) - B(Y(0)) = 0$, and consequently
\formula{
 Z(u) & = X\biggl(\int_{[0, R)} L_y(L_0^{-1}(t)) a(dy)\biggr) + \frac{1}{2} \int_0^{L_0^{-1}(t)} b(y) d_y L_y(t)
}
when $t < L_0(T_R)$.
\end{remark}

\begin{remark}
As a consequence of Corollary~\ref{cor:loc}, every Lévy process with completely monotone jumps can be approximated by Itô integrals with respect to the martingale part of the Brownian motion $Y(t)$ in $[0, R)$, measured at the inverse local time of $Y(t)$ at $0$. More precisely, there is a sequence $b_n(t)$ of locally square integrable functions on $[0, R)$ such that
\formula{
 Z(t) & = \lim_{n \to \infty} \int_0^{L_0^{-1}(t)} b_n(Y(s)) d\dot{Y}(s) ,
}
with the limit understood in the sense of weak convergence of finite-dimensional distributions. Indeed, it suffices to take any sequence $b_n(t)$ such that $b_n(t)$ converges to $b(t)$ weakly in $L^2([0, r))$ for every $r < R$, and $(b_n(t))^2 dt$ converges vaguely to $a(dt) + (b(t))^2 dt$ on $[0, R)$. A rigorous proof of this result, however, is beyond the scope of this article.
\end{remark}

In principle, our proof of Theorem~\ref{thm:main} identifies the generator of $(X(t), Y(t))$ with an appropriate second-order elliptic differential operator $L$, and the generator of the trace process $Z(t)$ with the Dirichlet-to-Neumann operator $K$ corresponding to $L$. A complete description of such operators $K$ is given by the author in~\cite{k:hx}. The key ingredient in that paper is a recent extension of Krein's spectral theory of strings due to Eckhardt and Kostenko, given in~\cite{ek}; we also refer to~\cite{kl,lw} for less general results of the same kind.

Identification of the generator of the trace left by a diffusion on the boundary of a domain with the Dirichlet-to-Neumann operator corresponding to the generator of the diffusion was already given by Molchanov in~\cite{molchanov1,molchanov2}, under rather restrictive regularity assumptions on the diffusion. Boundary traces of reflecting Brownian motions in smooth domains in $\R^d$ are thoroughly studied in~\cite{hsu}. Representation of traces of symmetric diffusions in terms of the local time is given in~\cite{kolsrud}; see also~\cite{bo}. The case of a diffusion in a half-space with horizontal and vertical components independent is treated in detail in~\cite{ah}. Traces of regular diffusions in irregular domains are discussed in~\cite{bcr}. For results on traces of general symmetric Markov processes, see~\cite{cfy}. Another study of traces of Markov process is given in~\cite{ksv}. However, the author failed to find a general result of that kind, which would include a rather wide class of diffusions introduced in Definition~\ref{def:diff}. For this reason, our argument does not strictly follow the strategy described in the previous paragraph, and we need to work directly with a result from~\cite{ek} (or, strictly speaking, with its reformulation given as an intermediate result in~\cite{k:hx}).

A special case of Theorem~\ref{thm:main}, which links the two-dimensional Brownian motion $(X(t), Y(t))$ in $\R \times [0, \infty)$ with the Cauchy process (symmetric stable process of index $1$) $Z(t)$ on $\R$ was already given by Spitzer in~\cite{spitzer}. In fact, in this result one allows $X(t)$ to be the Brownian motion in $\R^d$. An extension to symmetric stable Lévy processes $Z(t)$ of arbitrary index is a classical result due to Molchanov and Ostrovski, see~\cite{mo}. For analytical counterparts of this result, see~\cite{cs,ms}. An extension of this result to symmetric shift-invariant diffusions and symmetric Lévy processes with continuous jumps was carried out in the analytical context in~\cite{km}. A probalistic approach to the same problem is presented in~\cite{ah}, where in fact the process $X(t)$ is allowed to be an arbitrary diffusion in $\R^d$.

Certain functionals of Brownian local times are studied in detail by Biane and Yor in~\cite{by}. In particular, their work provides a representation of stable Lévy processes in terms of principal values of the local time for the Brownian motion on $\R$ --- rather than usual integrals on $[0, \infty)$, as in Corollary~\ref{cor:loc}. Almost sure convergence and convergence in probability of such principal value integrals is studied in~\cite{cherny}; Remark~(iv) under Theorem~3.2 therein essentially cover a special case of Corollary~\ref{cor:loc} for subordinators (non-negative Lévy processes) $Z(t)$.

We conclude the introduction with a short description of the structure of this article. Section~\ref{sec:pre} briefly introduces main tools used in the proofs of our main results. One-dimensional diffusions and Krein's spectral theory of strings are discussed in the short Section~\ref{sec:one}. The proof of Theorems~\ref{thm:main} and both corollaries is given in Section~\ref{sec:two}. Finally Section~\ref{sec:ex} contains a number of examples.

%
%

\section{Preliminaries}
\label{sec:pre}

We occasionally write $a \wedge b$ for $\min\{a, b\}$. In what follows, we usually spell out explicitly arguments of functions. For example, the symbol $W(t)$ can mean both the entire process (if $t$ is a free variable) and a single random variable (if $t$ is fixed). The meaning will always be clear from the context.

\subsection{Lévy processes with completely monotone jumps}

A \emph{Lévy process} $Z(t)$ is a stochastic process with independent and stationary increments and càdlàg paths, started at $Z(0) = 0$. In this work we only consider real-valued Lévy processes, and we allow a Lévy process to be killed at a uniform rate: there is a killing rate $\gamma \ge 0$ such that $Z(t)$ is equal to the cemetery state with probability $1 - e^{-\gamma t}$. A Lévy process is completely determined by its characteristic exponent $\Psi(\xi)$:
\formula{
 \ex e^{i \xi Z(t)} & = e^{-t \Psi(\xi)}
}
for $t \ge 0$ and $\xi \in \R$, and $\Psi(\xi)$ is given by the Lévy--Khintchine formula:
\formula{
 \Psi(\xi) & = \alpha \xi^2 - i \beta \xi + \gamma + \int_{\R \setminus \{0\}} (1 - e^{i \xi x} + i \xi (1 - e^{-|x|}) \sign x) \nu(dx) .
}
Here $\alpha \ge 0$ is the Gaussian coefficient, $\beta \in \R$ is the drift, $\gamma \ge 0$ is the rate of killing, and $\nu(dx)$ is a non-negative measure on $\R \setminus \{0\}$ such that $\int_{\R \setminus \{0\}} (1 \wedge x^2) \nu(dx)$ is finite --- the Lévy measure of $Z(t)$. For a general account on Lévy processes, we refer to~\cite{applebaum,bertoin,sato}.

Recall that a function $f(x)$ is \emph{completely monotone} on $(0, \infty)$ if $f$ is smooth on $(0, \infty)$ and $(-1)^n f^{(n)}(x) \ge 0$ for every $n = 0, 1, 2, \ldots$ and $x > 0$. By Bernstein's theorem, $f(x)$ is completely monotone on $(0, \infty)$ if and only if $f(x)$ is the Laplace transform of a locally finite non-negative measure in $[0, \infty)$.

A Lévy process $Z(t)$ is said to have \emph{completely monotone jumps} if $\nu(dx)$ is absolutely continuous with respect to the Lebesgue measure, and the density function of $\nu(dx)$, denoted by the same symbol $\nu(x)$, has the following property: $\nu(x)$ and $\nu(-x)$ are completely monotone functions of $x > 0$. This class of Lévy processes first appeared in the work of Rogers, who observed in~\cite{rogers} that Wiener--Hopf factorisation for Lévy processes with completely monotone jumps has a particularly nice structure. Fluctuation theory for this class of processes was further developed by the author in~\cite{k:cm}. We remark that the class of Lévy processes with completely monotone jumps appears naturally in the theory of bell-shape in~\cite{k:bell:1,k:bell:2}; see also~\cite{jk,kk,k:hl,kmr,km,mucha} for other related results. For further discussion of Lévy processes with completely monotone jumps, we refer to~\cite{k:cm}, from where we take the following result.

\begin{proposition}[Theorem~3.3 in~\cite{k:cm}]
\label{prop:rogers}
Suppose that $\Psi(\xi)$ is a continuous function on $\R$, satisfying $\Psi(-\xi) = \overline{\Psi(\xi)}$ for all $\xi \in \R$. The following conditions are equivalent:
\begin{enumerate}[label=\rm (\alph*)]
\item
$\Psi$ is the characteristic exponent of a Lévy process with completely monotone jumps;
\item
for all $\xi \in \R$ we have
\formula{
 \Psi(\xi) & = \alpha \xi^2 - i \beta \xi + \gamma + \frac{1}{\pi} \int_{\R \setminus \{0\}} \biggl(\frac{\xi}{\xi + i s} + \frac{i \xi \sign s}{1 + |s|}\biggr) \frac{\mu(ds)}{|s|}
}
for some non-negative measure $\mu$ on $\R \setminus \{0\}$ which satisfies the integrability condition $\int_{\R \setminus \{0\}} \min\{|s|^{-1}, |s|^{-3}\} \mu(ds) < \infty$;
\item
either $\Psi(\xi) = 0$ for all $\xi \in \R$ or for all $\xi \in \R$ we have
\formula[eq:dn:symbol:exp]{
 \Psi(\xi) & = c \exp\biggl(\frac{1}{\pi} \int_{-\infty}^\infty \biggl(\frac{\xi}{\xi + i s} - \frac{1}{1 + |s|}\biggr) \frac{\thet(s)}{|s|} \, ds\biggr)
}
for some $c > 0$ and some Borel function $\thet$ on $\R$ with values in $[0, \pi]$;
\item
$\Psi$ extends to a holomorphic function in the right complex half-plane $\{\xi \in \C : \re \xi > 0\}$ and $\re (\Psi(\xi) / \xi) \ge 0$ whenever $\re \xi > 0$ (that is, $\Psi(\xi) / \xi$ is a \emph{Nevanlinna--Pick function} in the right complex half-plane).
\end{enumerate}
\end{proposition}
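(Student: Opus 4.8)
The plan is to prove the chain of equivalences by routing everything through the integral representation in~(b): I would establish (a)~$\Leftrightarrow$~(b) by a direct computation resting on Bernstein's theorem, then (b)~$\Leftrightarrow$~(d) using the classical Herglotz--Nevanlinna representation of holomorphic self-maps of the right half-plane, and finally (c)~$\Leftrightarrow$~(d) via the exponential (Nevanlinna factorisation) representation in which the function $\thet$ records the boundary phase of $\Psi$.

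For (a)~$\Rightarrow$~(b) I would start from the Lévy--Khintchine form of $\Psi$ and use Bernstein's theorem to write the one-sided densities as Laplace transforms, $\nu(x) = \int_{(0,\infty)} e^{-sx} \mu_+(ds)$ and $\nu(-x) = \int_{(0,\infty)} e^{-sx} \mu_-(ds)$ for $x > 0$. Assembling $\mu_+$ and $\mu_-$ into a single measure $\mu$ on $\R \setminus \{0\}$, with the sign of $s$ distinguishing the two half-lines, I would substitute into the Lévy--Khintchine integral and interchange the order of integration by Fubini. The inner integral over the positive side, $\int_0^\infty (1 - e^{i\xi x} + i \xi (1 - e^{-x})) e^{-sx} dx = \frac{1}{s}\bigl(\frac{\xi}{\xi + is} + \frac{i \xi}{1 + s}\bigr)$, is elementary and reproduces exactly the $s > 0$ part of the kernel in~(b), weighted by $\frac{1}{|s|}$; the negative side is handled symmetrically and yields the $s < 0$ part. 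The integrability $\int_{\R \setminus \{0\}} (1 \wedge x^2) \nu(dx) < \infty$ translates into $\int_{\R \setminus \{0\}} \min\{|s|^{-1}, |s|^{-3}\} \mu(ds) < \infty$, while the Gaussian coefficient $\alpha$, drift $\beta$ and killing rate $\gamma$ carry over unchanged. Reading the computation backwards gives (b)~$\Rightarrow$~(a), since for each fixed $s$ the kernel is the exponent of a process with one-sided exponential jumps.

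For (b)~$\Rightarrow$~(d) I would divide the representation by $\xi$ and check term by term that $\re(\Psi(\xi)/\xi) \ge 0$ whenever $\re \xi > 0$: since $s$ is real, $\re(\xi + is) = \re \xi > 0$, hence $\re \frac{1}{\xi + is} = \frac{\re \xi}{|\xi + is|^2} \ge 0$, while $\re(\alpha \xi) \ge 0$ and $\re(\gamma / \xi) \ge 0$, and the remaining subtraction terms together with $-i\beta$ are purely imaginary; holomorphy in the right half-plane is visible from the representation. For the converse (d)~$\Rightarrow$~(b) I would apply the Herglotz--Nevanlinna representation theorem to the self-map $\Psi(\xi)/\xi$ of the right half-plane (equivalently, after the rotation $\xi \mapsto -i\xi$, a Pick function on the upper half-plane). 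This produces a linear term, a constant and an integral against a measure on the imaginary axis; rewriting the measure in the variable $s$ recovers $\mu$, the mass at the origin recovers $\gamma$, the slope at infinity recovers $\alpha$, and the real constant recovers $\beta$. The hypothesis $\Psi(-\xi) = \overline{\Psi(\xi)}$ forces these data to be real and the measure to split correctly across $s > 0$ and $s < 0$.

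Finally, for (c)~$\Leftrightarrow$~(d), assuming $\Psi \not\equiv 0$, a Nevanlinna--Pick function of this type is non-vanishing in the open right half-plane, so $\log \Psi$ is well defined there, and because $\Psi(\xi)/\xi$ maps into the right half-plane the argument of $\Psi$ along the imaginary axis stays in an interval of length $\pi$. Applying the Herglotz representation to $\log \Psi$ yields the exponential formula in~(c), with $\thet(s) \in [0, \pi]$ equal to the boundary phase of $\Psi$ (a Sokhotski--Plemelj computation at the pole $s = -\tau$ identifies $\arg \Psi(i\tau)$ with $\thet(-\tau)$) and $c > 0$ a normalising constant. I expect the main obstacle to be not any single conceptual step but the careful bookkeeping that makes all three correspondences exact and mutually consistent: pinning down the subtraction and normalisation terms (the choices $1 - e^{-|x|}$ and $\frac{1}{1 + |s|}$) so that every integral converges, correctly separating the boundary atom at the origin from the behaviour at infinity in the representation theorems, and --- for~(c) --- justifying the existence of non-tangential boundary values and identifying $\thet$ with the phase by Fatou-type arguments.
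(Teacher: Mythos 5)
The first thing to note is that the paper contains no proof of this statement to compare against: Proposition~\ref{prop:rogers} is quoted as a black box from Theorem~3.3 of~\cite{k:cm}, and the present paper never argues it. So your proposal can only be judged on its own merits and against that cited source, whose proof proceeds through the same circle of ideas you chose (Bernstein's theorem plus classical Nevanlinna--Pick theory). Your key computations are correct: $\int_0^\infty(1-e^{i\xi x})e^{-sx}\,dx=\frac{1}{s}\cdot\frac{\xi}{\xi+is}$ and $i\xi\int_0^\infty(1-e^{-x})e^{-sx}\,dx=\frac{1}{s}\cdot\frac{i\xi}{1+s}$ do reproduce the kernel in~(b); $\int_0^\infty(1\wedge x^2)e^{-sx}\,dx\asymp\min\{s^{-1},s^{-3}\}$ gives exactly the stated integrability; dividing~(b) by $\xi$ makes every term manifestly of nonnegative real part in $\{\re\xi>0\}$; and the converse (d)$\Rightarrow$(b) via the Pick representation of $\Psi(\xi)/\xi$ works as you say, with the atom of the representing measure at the origin giving $\gamma$, the linear term giving $\alpha$, and the discrepancy between the Pick compensator $it/(1+t^2)$ and the compensator $i\sign t/(1+|t|)$ in~(b) integrating to a finite purely imaginary constant that is absorbed into $\beta$.

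Three points would need care in a write-up, though none is a fatal gap. (i)~You treat (d)$\Rightarrow$(c) in detail but leave (c)$\Rightarrow$(d) implicit; that direction requires a Poisson-kernel sandwich: since $\im\bigl(\xi/(\xi+is)\bigr)=-s\,\re\xi/|\xi+is|^2$, formula~\eqref{eq:dn:symbol:exp} expresses $\arg\Psi(\xi)$ as a Poisson average of $-\sign(s)\thet(s)$, and $0\le\thet\le\pi$ together with $\frac{1}{\pi}\int_{-\infty}^0\re\xi\,|\xi+is|^{-2}\,ds=\frac{1}{2}+\frac{\arg\xi}{\pi}$ yields $|\arg\Psi(\xi)-\arg\xi|\le\pi/2$, which is~(d). (ii)~Your claim that the argument of $\Psi$ along the imaginary axis stays in an interval of length $\pi$ is not accurate as stated: it is $\arg\bigl(\Psi(\xi)/\xi\bigr)$ that lies in $[-\pi/2,\pi/2]$, while $\arg\Psi$ lies in $[0,\pi]$ on the upper imaginary half-axis and in $[-\pi,0]$ on the lower one; correspondingly the boundary identification carries a sign, $\arg\Psi(i\tau)=\sign(\tau)\,\thet(-\tau)$, and this sign is precisely what makes $\thet$ a single $[0,\pi]$-valued function on all of $\R$. (iii)~In (a)$\Rightarrow$(b), Bernstein's theorem a priori produces measures $\mu_\pm$ on $[0,\infty)$; an atom at $s=0$ must be excluded (it would force $\nu(x)\ge c>0$ for all $x>0$, contradicting $\int_{|x|>1}\nu(x)\,dx<\infty$), so that $\mu$ genuinely lives on $\R\setminus\{0\}$. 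With these refinements your outline is sound and, as far as one can tell, coincides in substance with the argument in the cited source.
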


In~\cite{k:cm}, holomorphic extensions of characteristic exponents $\Psi(\xi)$ of Lévy processes with completely monotone jumps are called \emph{Rogers functions}. We follow this terminology here.

Recall that a \emph{subordinator} $Z(t)$ is a non-negative (and hence non-decreasing) Lévy process. In this case it is more convenient to use the Laplace exponent $\Phi(\xi)$ rather than the characteristic (Lévy--Khintchine) exponent $\Psi(\xi)$: we have $\ex e^{-\xi Z(t)} = e^{-t \Phi(\xi)}$ whenever $\re \xi \ge 0$. Clearly, $\Psi(\xi) = \Phi(-i \xi)$ for $\xi \in \R$. Laplace exponents $\Phi(\xi)$ of subordinators with completely monotone jumps are known as \emph{complete Bernstein functions} or \emph{operator monotone functions}.

If $X(t)$ is the Brownian motion, $S(t)$ is a subordinator, and $X(t)$ and $S(t)$ are independent processes, then $Z(t) = X(S(t))$ is a Lévy process called \emph{subordinate Brownian motion}. We will use the following simple fact from~\cite{k:hl}; further discussion can be found in~\cite{k:hx}.

\begin{proposition}[see Proposition~2.14 in~\cite{k:hl}]
\label{prop:sbm}
If $S(t)$ is a subordinator with completely monotone jumps, then the corresponding subordinate Brownian motion $Z(t)$ has completely monotone jumps. Conversely, every symmetric Lévy process with completely monotone jumps is a subordinate Brownian motion corresponding to a subordinator $S(t)$ with completely monotone jumps.
\end{proposition}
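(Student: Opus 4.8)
The plan is to reduce both implications to the subordination identity for characteristic exponents and then read everything off from the integral representation in Proposition~\ref{prop:rogers}(b). First I would record the elementary computation behind subordination: if $X(t)$ is the Brownian motion normalised so that $\ex e^{i\xi X(t)} = e^{-t\xi^2}$ and $S(t)$ is an independent subordinator with Laplace exponent $\Phi$, then conditioning on the value of $S(t)$ gives $\ex e^{i\xi Z(t)} = \ex e^{-S(t)\xi^2} = e^{-t\Phi(\xi^2)}$. Hence the characteristic exponent of $Z(t) = X(S(t))$ is $\Psi(\xi) = \Phi(\xi^2)$ (the precise normalisation of $X$ only rescales $\xi$ and is immaterial). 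Recalling that the Laplace exponents of subordinators with completely monotone jumps are exactly the complete Bernstein functions, the whole proposition becomes the statement that the map $\Phi(\lambda) \mapsto \Phi(\xi^2)$ is a bijection between complete Bernstein functions and characteristic exponents of \emph{symmetric} Lévy processes with completely monotone jumps.

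For the forward implication I would use the canonical representation of a complete Bernstein function, $\Phi(\lambda) = a + b\lambda + \int_{(0,\infty)} \tfrac{\lambda}{\lambda+u}\,\rho(du)$ with $a, b \ge 0$ and $\int_{(0,\infty)} (1+u)^{-1}\rho(du) < \infty$. Substituting $\lambda = \xi^2$, changing the variable $u = s^2$, and using $\tfrac{\xi^2}{\xi^2+s^2} = \tfrac12\bigl(\tfrac{\xi}{\xi+is} + \tfrac{\xi}{\xi-is}\bigr)$, I would rewrite $\Psi(\xi) = \Phi(\xi^2)$ in precisely the shape of condition (b) of Proposition~\ref{prop:rogers}, with $\beta = 0$, $\alpha = b$, $\gamma = a$, and a symmetric measure $\mu$ obtained by pushing $\rho$ forward under $u \mapsto \sqrt u$; the correction terms $\tfrac{i\xi\sign s}{1+|s|}$ enter only formally and cancel between $s$ and $-s$. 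Proposition~\ref{prop:rogers} then identifies $\Psi$ as the characteristic exponent of a Lévy process with completely monotone jumps. A quicker, representation-free route to the same conclusion is via condition (d): a complete Bernstein function satisfies $|\arg\Phi(z)| \le |\arg z|$ on $\C\setminus(-\infty,0]$, so for $\re\xi > 0$ one has $\xi^2 \in \C \setminus (-\infty, 0]$ and $\arg(\Phi(\xi^2)/\xi) = \arg\Phi(\xi^2) - \arg\xi$ lies in $[-|\arg\xi|, |\arg\xi|] \subset (-\tfrac\pi2, \tfrac\pi2)$, whence $\re(\Psi(\xi)/\xi) \ge 0$.

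For the converse I would simply reverse this computation. Given a symmetric Lévy process $Z$ with completely monotone jumps, its characteristic exponent $\Psi$ is real-valued and even on $\R$, and Proposition~\ref{prop:rogers}(b) supplies a representation; reality and evenness force $\beta = 0$ and $\mu$ symmetric, so that pairing $s$ with $-s$ collapses the kernel to $\tfrac{2\xi^2}{\xi^2+s^2}$ and cancels the correction terms. Setting $\Phi(\lambda) = \Psi(\sqrt\lambda)$ and substituting $u = s^2$ then produces the canonical complete Bernstein representation, so $\Phi$ is the Laplace exponent of a subordinator $S$ with completely monotone jumps; by the subordination identity, $X(S(t))$ has characteristic exponent $\Phi(\xi^2) = \Psi(\xi)$ and is therefore equal in law to $Z$. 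I expect the main obstacle to be purely bookkeeping rather than conceptual: one must check that the symmetry of $Z$ is genuinely equivalent to $\beta = 0$ together with symmetry of $\mu$ (so that the odd, imaginary part of $\Psi$ vanishes), and that the two substitutions $\lambda = \xi^2$ and $u = s^2$ carry the integrability condition $\int \min\{|s|^{-1}, |s|^{-3}\}\mu(ds) < \infty$ of Proposition~\ref{prop:rogers}(b) exactly onto the condition $\int (1+u)^{-1}\rho(du) < \infty$ defining complete Bernstein functions, matching the behaviour of the measures both near $0$ and near $\infty$; the degenerate case $\Psi \equiv 0$ (that is, $Z \equiv 0$ and $S \equiv 0$) is handled separately and trivially.
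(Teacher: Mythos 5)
The paper itself offers no proof of this proposition: it is imported verbatim as Proposition~2.14 of~\cite{k:hl}, so there is nothing internal to compare against, and your argument is in effect a self-contained reconstruction of that cited result from Proposition~\ref{prop:rogers}. It is correct. The subordination identity $\Psi(\xi)=\Phi(\xi^2)$, the matching of the complete Bernstein representation $a+b\lambda+\int_{(0,\infty)}\lambda(\lambda+u)^{-1}\rho(du)$ with condition (b) under the substitution $u=s^2$ (where the two integrability conditions do transform into one another, since $\min\{|s|^{-1},|s|^{-3}\}\,|s|\asymp(1+s^2)^{-1}$), and the shortcut via condition (d) (using $|\arg\Phi(z)|\le|\arg z|$ together with preservation of the sign of the argument, so that $\arg\Phi(\xi^2)$ lies between $0$ and $2\arg\xi$) all check out. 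One point deserves more care: in the converse you assert that reality and evenness of $\Psi$ \emph{force} $\beta=0$ and $\mu$ symmetric, which tacitly invokes uniqueness of the representing triple in Proposition~\ref{prop:rogers}(b). Uniqueness does hold (it is established in~\cite{k:cm}), but you can avoid relying on it: the kernel $K(\xi,s)=\tfrac{\xi}{\xi+is}+\tfrac{i\xi\sign s}{1+|s|}$ satisfies $K(-\xi,s)=K(\xi,-s)$, so replacing $(\beta,\mu)$ by $(0,\tfrac12(\mu+\check{\mu}))$, with $\check{\mu}$ the reflection of $\mu$, represents the symmetrisation $\tfrac12(\Psi(\xi)+\Psi(-\xi))$, which equals $\Psi$ by assumption; this produces a representation with $\beta=0$ and $\mu$ symmetric without any uniqueness claim. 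With that small patch (and the immaterial Brownian normalisation constant, which you correctly flag), your proof is complete, and it has the advantage over the paper's bare citation of staying entirely inside the toolkit the paper already develops.
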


\subsection{Additive functionals of the Brownian motion}

A pair of stochastic processes $(X(t), Y(t))$ is said to be a \emph{Markov additive process}, with $Y(t)$ playing the role of the regulator and $X(t)$ being the additive part, if $X(t)$ is a real-valued process, $Y(t)$ and $(X(t), Y(t))$ are strong Markov processes, and the law of $(p + X(t), Y(t))$ under $\pr^{(x, y)}$ is the same as the law of $(X(t), Y(t))$ under $\pr^{(p + x, y)}$. We do not discuss the general theory of Markov additive processes here, and we only cite later in this section the relevant result from~\cite{cinlar:1} and~\cite{cinlar:2}.

If $(X(t), Y(t))$ is a Markov additive process and additionally $X(t)$ is adapted to the natural filtration of $Y(t)$, then $X(t)$ is said to be a (homogeneous, strong Markov) \emph{additive functional} of $Y(t)$. If $X(t)$ is non-decreasing (which is a standing assumption in most works), it is a \emph{positive additive functional}, while if the paths of $X(t)$ are continuous, it is a \emph{continuous additive functional}.

Suppose that $Y(t)$ is the Brownian motion. Every positive continuous additive functional of $Y(t)$ can be written as an integral average of local times of $Y(t)$. More precisely, there exists a family of positive continuous additive functionals $L_y(t)$ of $Y(t)$, which depend continuously on $y \in \R$, and which are characterised by the \emph{Tanaka's formula}: the process $|Y(t) - y| - L_y(t)$ is a martingale. The functional $L_y(t)$ is said to be the \emph{local time} of $Y(y)$ at $y$, and, informally, it measures the amount of time spent by $Y(t)$ in an infinitesimal neighbourhood of $y$ up to time $t$. This is made precise by the following occupation time formula.

\begin{proposition}[Theorem~22.5 in~\cite{kallenberg}]
\label{prop:bm:otf}
If $Y(t)$ is the Brownian motion and $a(y)$ is locally integrable on $\R$, then, with probability $1$,
\formula{
 \int_0^t a(Y(s)) ds & = \int_{-\infty}^\infty a(y) L_y(t) dy .
}
\end{proposition}

We have the following classical representation theorem, due to Volkonsky, McKean and Tanaka.

\begin{proposition}[Theorem~22.25 in~\cite{kallenberg}, Corollary~X.2.13 in~\cite{ry}]
\label{prop:bm:paf}
If $Y(t)$ is the Brownian motion, $L_y(t)$ is the local time of $Y(t)$ at $y$, and $X(t)$ is a positive continuous additive functional of $Y(t)$, then there exists a locally finite non-negative measure $a(dy)$ on $\R$ such that, with probability $1$,
\formula{
 X(t) & = \int_\R L_y(t) a(dy) .
}
Furthermore, if $b(y)$ is a Borel function on $\R$ such that $b(y) a(dy)$ is a locally finite measure on $\R$, then, with probability $1$,
\formula{
 \int_0^t b(Y(s)) dX(s) & = \int_{\R} L_y(t) b(y) a(dy) .
}
\end{proposition}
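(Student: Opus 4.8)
The plan is to route everything through the $1$-potentials of positive continuous additive functionals (PCAFs) and the fact that the local times $L_y(t)$ are precisely the PCAFs attached to the Dirac masses $\delta_y$. The representation of $X$ then amounts to disintegrating a single measure (the Revuz measure of $X$) into Dirac masses and reassembling the associated local times, while the integration formula follows from the support properties of local time.

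First I would attach to each PCAF $A$ of the Brownian motion its $1$-potential $u_A(x) = \ex^x \int_0^\infty e^{-t} dA(t)$. The additivity of $A$ together with the strong Markov property shows that $u_A$ is $1$-excessive and, more importantly, that $A$ is uniquely determined (up to an evanescent set) by $u_A$; this uniqueness is the backbone of the argument and the place where the genuine work sits. For the local time one computes directly, using $d_t \ex^x L_y(t) = p_t(x, y) dt$, that $u_{L_y}(x) = \int_0^\infty e^{-t} p_t(x, y) dt = g_1(x, y)$ is the $1$-resolvent density of the Brownian motion, so that the $L_y$ are exactly the PCAFs whose potentials are the Green kernels $g_1(\cdot, y)$.

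Next I would invoke the Riesz-type representation of the potential $u_X$ of our given PCAF $X$: it can be written as $u_X(x) = \int_\R g_1(x, y) a(dy)$ for a non-negative measure $a$, the Revuz measure of $X$. Stopping $X$ at the exit times from bounded intervals keeps the corresponding potentials bounded and, comparing with $g_1$, forces $a$ to be locally finite. The candidate functional $\tilde X(t) = \int_\R L_y(t) a(dy)$ is well-defined and continuous because $y \mapsto L_y(t)$ is jointly continuous in $(t, y)$ and supported on the compact range of the path; by Fubini its $1$-potential equals $\int_\R u_{L_y}(x) a(dy) = \int_\R g_1(x, y) a(dy) = u_X(x)$. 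The uniqueness from the previous step then yields $X(t) = \int_\R L_y(t) a(dy)$ almost surely, which is the first assertion.

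For the integration formula I would differentiate this representation: since $dX(s) = \int_\R d_s L_y(s) \, a(dy)$, Fubini gives $\int_0^t b(Y(s)) dX(s) = \int_\R \bigl(\int_0^t b(Y(s)) d_s L_y(s)\bigr) a(dy)$. The inner integral collapses to $b(y) L_y(t)$, because the measure $d_s L_y(s)$ is carried by the level set $\{s : Y(s) = y\}$; this holds first for continuous $b$ and extends to Borel $b$ with $b(y) a(dy)$ locally finite by a monotone-class argument. Collecting terms gives $\int_0^t b(Y(s)) dX(s) = \int_\R L_y(t) b(y) a(dy)$. Throughout, the main obstacle is the pair of standard but substantial facts from additive-functional theory — that a PCAF is determined by its $1$-potential, and that $1$-excessive potentials are represented by measures against $g_1$ — after which the remaining steps are interchanges of integration justified by the joint continuity and support properties of Brownian local time.
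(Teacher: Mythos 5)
The paper itself gives no proof of this proposition: it is quoted as a classical result of Volkonsky--McKean--Tanaka, with references to Kallenberg (Theorem~22.25) and Revuz--Yor (Corollary~X.2.13). Your potential-theoretic argument is, in substance, exactly the proof found in those references: Revuz--Yor phrase it through Revuz measures (the Revuz measure of $L_y$ is $\delta_y$, and two PCAFs with the same Revuz measure coincide), which is the same mechanism as your pairing of $1$-potentials with the kernel $g_1(x,y)$ and the uniqueness-from-potentials lemma; your second part (Fubini plus the support property of $d_sL_y(s)$, upgraded to all $y$ simultaneously via joint continuity, then a monotone-class step) is also the standard route. One point of order needs repair: you invoke the Riesz representation $u_X(x)=\int_\R g_1(x,y)\,a(dy)$ before localising, but for a PCAF such as $X(t)=\int_0^t e^{(Y(s))^4}ds$ the $1$-potential is identically $+\infty$, so that step fails as written. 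The fix is the stopping you already mention, applied first rather than last: represent the functional killed at the exit time of $(-n,n)$ (whose potential against the killed Green kernel is finite), obtain a measure $a_n$ on $(-n,n)$, check consistency in $n$, and let $a=\lim_n a_n$; with that reordering your argument is complete and coincides with the cited proofs.
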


For further information about local times and positive additive functionals, we refer to Chapter~22 of Kallenberg's monograph~\cite{kallenberg}. Basic references for Brownian motion, its local times and related concepts include~\cite{as,im,kallenberg,ks,rw,rw}. Various properties of Brownian local times can be found in the book~\cite{bs}. The paper~\cite{py} contains a detailed analysis of local times of diffusions from a perspective which is closely related to our work. We also mention~\cite{di}, which dicusses the mean value of the local time of Brownian excursions and meanders up to time $t$.

The following result due to Tanaka provides a complete description of signed continuous additive functionals of the Brownian motion.

\begin{proposition}[see~\cite{tanaka}]
\label{prop:bm:af}
If $Y(t)$ is the Brownian motion and $X(t)$ is a continuous additive functional of $Y(t)$, then, with probability $1$,
\formula{
 X(t) & = \tau(Y(t)) - \tau(Y(0)) + \int_0^t b(Y(s)) dY(s)
}
for a continuous function $\tau(y)$ on $\R$ such that $\tau(0) = 0$, and a locally square integrable function $b(y)$ on $\R$.
\end{proposition}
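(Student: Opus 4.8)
The plan is to isolate, inside the continuous additive functional $X(t)$, a pure \emph{position part} of the form $\tau(Y(t)) - \tau(Y(0))$ and a \emph{martingale part} of the form $\int_0^t b(Y(s))\,dY(s)$. Two observations set up the strategy. First, for any continuous $\tau$ the process $\tau(Y(t)) - \tau(Y(0))$ is itself a continuous additive functional, since additivity follows from the telescoping identity $\tau(Y(t+s)) - \tau(Y(0)) = (\tau(Y(s)) - \tau(Y(0))) + (\tau(Y(t)) - \tau(Y(0)))\circ\theta_s$; and for $b \in L^2_{\loc}$ the stochastic integral $\int_0^t b(Y(s))\,dY(s)$ is a continuous (local) martingale additive functional. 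Thus the statement asserts that these two families together exhaust all continuous additive functionals. Second, every continuous \emph{martingale} additive functional $M(t)$ of $Y(t)$ already has the required form: by the predictable representation property of Brownian motion $M(t) = \int_0^t H(s)\,dY(s)$ for a predictable $H$, while additivity together with the Markov property forces $H(s) = b(Y(s))$ for a Borel function $b$; since $\langle M\rangle(t) = \int_0^t b(Y(s))^2\,ds$ is a positive continuous additive functional, its Revuz measure is simultaneously some locally finite $\mu(dy)$ (Proposition~\ref{prop:bm:paf}) and $b(y)^2\,dy$ (Proposition~\ref{prop:bm:otf}), whence $b \in L^2_{\loc}$.

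Granting these, the core of the argument is to choose $\tau$ so that the remainder $M(t) := X(t) - (\tau(Y(t)) - \tau(Y(0)))$ is a martingale additive functional, after which the previous paragraph concludes. Here I would work with expectations. Writing $g_t(y) := \ex^y[X(t)]$ (finite after a routine localisation in the state variable), additivity and the Markov property give the cocycle identity $g_{t+s} = g_t + P_t g_s$, where $P_t$ is the transition semigroup of $Y(t)$; dividing by $s$ and letting $s \downarrow 0$ yields $\partial_t g_t = P_t \rho$ with $\rho := \lim_{t \downarrow 0} t^{-1} g_t$, so that $g_t = \int_0^t P_s \rho\,ds$. If $\rho$ can be written as $\tfrac12 \tau''$ in the distributional sense for a continuous $\tau$, then the heat-kernel smoothing gives $\partial_t(P_t\tau - \tau) = P_t(\tfrac12\tau'') = P_t \rho$ for $t > 0$, with the same initial value $0$, so $g_t = P_t \tau - \tau = \ex^y[\tau(Y(t)) - \tau(Y(0))]$. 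Consequently $\ex^y[M(t)] = 0$ for all $y$ and $t$; since $M$ is a continuous additive functional, the Markov property gives $\ex^y[M(t+s) \mid \mathcal{F}_t] = M(t) + \ex^{Y(t)}[M(s)] = M(t)$, so $M$ is a martingale additive functional, as needed. Subtracting a constant arranges $\tau(0) = 0$, and adding an affine function to $\tau$ only shifts $b$ by a constant, which accounts for the harmless non-uniqueness of the representation.

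The step I expect to be the main obstacle is precisely the construction of $\tau$, that is, showing that the limit $\rho = \lim_{t\downarrow 0} t^{-1}\ex^{\cdot}[X(t)]$ exists and is the second distributional derivative of a \emph{continuous} function. The difficulty is that a general continuous additive functional need \emph{not} be a semimartingale --- already $\tau(Y(t)) - \tau(Y(0))$ fails to be one when $\tau$ is merely continuous --- so Itô's formula is unavailable and one cannot simply read off $\tau$ by differentiating $X$; everything must instead be extracted from the additive and Markovian structure through the functions $g_t$. One must use the continuity and additivity of $X$ to control $g_t$ as $t \downarrow 0$ and to verify that $\rho$ lies in exactly the class ``second derivative of a continuous function'', which is what makes the equation $\tau'' = 2\rho$ solvable with $\tau \in \cont(\R)$, unique up to an affine term. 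For finite-variation $X$ this reduces to the Itô--Tanaka formula, with $\rho = \tfrac12\tau''$ a signed measure and $\tau$ a difference of convex functions; the general case genuinely requires the expectation (energy) control rather than a bound on the variation. Once $\tau$ is produced, the decomposition $X(t) = (\tau(Y(t)) - \tau(Y(0))) + \int_0^t b(Y(s))\,dY(s)$ follows by combining the two paragraphs above.
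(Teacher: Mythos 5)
The paper offers no proof of this proposition --- it is quoted directly from Tanaka's 1963 paper --- so your attempt must stand on its own as a complete proof of Tanaka's theorem. It does not, and the gap sits exactly where you flag it: the construction of $\tau$. The surrounding architecture (take expectations to kill the martingale part; check that $M(t) = X(t) - \tau(Y(t)) + \tau(Y(0))$ with $\ex^y[M(t)] = 0$ is a martingale additive functional via the Markov property; represent martingale additive functionals as $\int_0^t b(Y(s))\,dY(s)$ with $b \in L^2_\loc$ via predictable representation plus the Revuz measure of the bracket) is correct in outline. But all of it is conditional on producing a continuous $\tau$ with $\tfrac12 \tau'' = \rho$, and that existence statement \emph{is} the theorem: it is precisely what distinguishes continuous additive functionals of one-dimensional Brownian motion from general functionals. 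Writing that ``one must use the continuity and additivity of $X$ to control $g_t$'' restates the problem rather than solving it.

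Moreover, the specific mechanism you propose for extracting $\rho$ fails on the most basic examples. You define $\rho := \lim_{t \downarrow 0} t^{-1} \ex^{\cdot}[X(t)]$ as a pointwise limit. Take $X(t) = L_0(t)$, the local time at $0$ --- a positive continuous additive functional covered by the statement, with $\tau(y) = |y|$ and $b(y) = -\sign(y)$ by Tanaka's formula. Then $t^{-1}\ex^x[L_0(t)] = t^{-1}\int_0^t p_s(x,0)\,ds \to 0$ for every $x \neq 0$, while at $x = 0$ it behaves like $\sqrt{2/(\pi t)} \to \infty$; the pointwise limit is zero Lebesgue-almost everywhere and retains no information about $X$. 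The limit must be taken in a weak (distributional) sense --- for positive functionals this is the Revuz correspondence --- and for signed functionals the object $\rho$ need not even be a signed measure: since $\tau$ is merely continuous, $\tfrac12\tau''$ can be a distribution of order two (take $\tau$ nowhere differentiable, so that $\tau(Y(t)) - \tau(Y(0))$ is a continuous additive functional that is not a semimartingale, as you yourself note). So identifying the class in which the limit lives cannot be done by pointwise limits plus heat-kernel smoothing; one needs a genuinely different construction, e.g.\ expectations of $X$ at exit times of compact intervals or resolvent expectations, where finiteness and continuity in the starting point can actually be established and the Dirichlet-type problem for $\tau$ can be solved locally and patched together. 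Two further steps in the same paragraph are asserted without justification and are nontrivial in exactly this generality: the finiteness of $g_t(y) = \ex^y[X(t)]$ (for a signed functional with no variation or integrability hypotheses, ``routine localisation'' is not routine --- stopping destroys additivity, and there is no monotone convergence to fall back on), and the differentiability in $t$ of $g_t$ used to write $\partial_t g_t = P_t \rho$.
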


Noteworthy, if $X(t)$ is a positive continuous additive functional of $Y(t)$, then in the above two representations (Propositions~\ref{prop:bm:paf} and~\ref{prop:bm:af}) $b(y)$ is a convex function, and we have $b(y) = -\tau'(y)$ and $a(dy) = \tfrac{1}{2} \tau''(dy)$ (in the sense of distributions). This is a consequence of the following Itô--Tanaka formula, which is due to Meyer and Wang.

\begin{proposition}[Theorem~22.5 in~\cite{kallenberg}]
\label{prop:bm:it}
If $Y(t)$ is the Brownian motion and $\tau(y)$ is a function such that the second distributional derivative of $\tau(y)$ corresponds to a locally finite measure on $\R$, denoted by $\tau''(dy)$, then
\formula{
 \tau(Y(t)) & = \tau(Y(0)) + \int_0^t \tau'(Y(s)) dY(s) + \frac{1}{2} \int_\R L_y(t) \tau''(dy) .
}
\end{proposition}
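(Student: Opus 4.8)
The plan is to establish the identity first for smooth $\tau$ and then reach the general case by mollification. First I would treat the case where $\tau$ is of class $C^2$. Then $\tau''(dy) = \tau''(y)\,dy$ with $\tau''$ continuous, and the classical Itô formula gives
\formula{
 \tau(Y(t)) & = \tau(Y(0)) + \int_0^t \tau'(Y(s))\,dY(s) + \frac{1}{2}\int_0^t \tau''(Y(s))\,ds .
}
Applying the occupation time formula of Proposition~\ref{prop:bm:otf} to the locally integrable function $\tau''$ rewrites the last term as $\tfrac{1}{2}\int_\R L_y(t) \tau''(y)\,dy = \tfrac{1}{2}\int_\R L_y(t)\,\tau''(dy)$, which is exactly the claimed formula.

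For the general case I would exploit the hypothesis that $\tau''$ is a locally finite signed measure: this means $\tau$ is locally a difference of convex functions, so that $\tau'$ (taken, say, as the right-hand derivative) exists everywhere, is locally of bounded variation, and is therefore locally bounded and continuous off a countable set. Fixing a mollifier $\rho_n$ and setting $\tau_n = \tau * \rho_n$, each $\tau_n$ is of class $C^2$, and the smooth case already proved yields
\formula{
 \tau_n(Y(t)) & = \tau_n(Y(0)) + \int_0^t \tau_n'(Y(s))\,dY(s) + \frac{1}{2}\int_\R L_y(t) \tau_n''(y)\,dy .
}
I would then pass to the limit $n \to \infty$ term by term. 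Since $\tau$ is continuous, $\tau_n \to \tau$ locally uniformly, so $\tau_n(Y(t)) \to \tau(Y(t))$ and $\tau_n(Y(0)) \to \tau(Y(0))$ almost surely. For the final term I would use that, for fixed $t$, the map $y \mapsto L_y(t)$ is continuous with compact support almost surely, whereas $\tau_n''(y)\,dy = (\tau'' * \rho_n)(dy)$ converges vaguely to $\tau''(dy)$; pairing the continuous compactly supported function against the vaguely convergent measures gives $\tfrac{1}{2}\int_\R L_y(t)\tau_n''(y)\,dy \to \tfrac{1}{2}\int_\R L_y(t)\,\tau''(dy)$ almost surely.

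The main obstacle, and the only genuinely probabilistic step, is the convergence of the stochastic integrals $\int_0^t \tau_n'(Y(s))\,dY(s)$. Here I would again invoke the occupation time formula, now pathwise: almost surely,
\formula{
 \int_0^t \abs{\tau_n'(Y(s)) - \tau'(Y(s))}^2\,ds & = \int_\R \abs{\tau_n'(y) - \tau'(y)}^2 L_y(t)\,dy .
}
Since $y \mapsto L_y(t)$ is continuous with compact support, while $\tau_n' = \tau' * \rho_n$ converges to $\tau'$ at every continuity point of $\tau'$, hence Lebesgue-almost everywhere, and remains uniformly bounded on compact sets, dominated convergence forces the right-hand side to $0$ almost surely. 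By the standard continuity of the Itô integral with respect to $L^2$-convergence of locally bounded integrands, this gives $\int_0^t \tau_n'(Y(s))\,dY(s) \to \int_0^t \tau'(Y(s))\,dY(s)$ in probability, and along a subsequence almost surely. Passing to the limit in the displayed identity for $\tau_n$ then yields the assertion. The points requiring care are the vague convergence of the mollified measures tested against the continuous compactly supported function $y \mapsto L_y(t)$, and the passage from convergence in probability of the stochastic integrals to an identity holding almost surely; both are routine given the joint continuity and compact support of the Brownian local time.
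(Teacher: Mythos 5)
The paper itself offers no proof of this proposition: it is imported verbatim as Theorem~22.5 of Kallenberg's book, so there is no internal argument to compare against, and the question is whether your self-contained proof is sound and how it relates to the standard one in the cited literature. Your mollification argument is correct, and it is a genuinely different route from the textbook proof: Kallenberg (like Revuz--Yor) reduces by localization and linearity to convex $\tau$, writes $\tau$ locally as an affine function plus $\tfrac{1}{2}\int \lvert x-a\rvert\,\tau''(da)$, and then integrates Tanaka's formula $\lvert Y(t)-a\rvert = \lvert Y(0)-a\rvert + \int_0^t \sign(Y(s)-a)\,dY(s) + L_a(t)$ against $\tau''(da)$, the interchange being justified by a stochastic Fubini theorem; that route never has to pass to a limit in a stochastic integral and yields the identity simultaneously for all $t$ at once. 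Your approach instead leverages only the classical $C^2$ It\^o formula and the occupation time formula, and its one delicate step --- convergence of $\int_0^t \tau_n'(Y(s))\,dY(s)$ --- is exactly the one you isolate; it goes through because $\tau'$ is locally bounded and of locally bounded variation, hence continuous off a countable set, so $\tau_n'\to\tau'$ Lebesgue-a.e.\ with a uniform local bound, and dominated convergence against the continuous, compactly supported local time applies pathwise. Two points are worth making explicit if you write this up in full: first, your limiting identity holds a.s.\ for each fixed $t$, so you should invoke continuity in $t$ of all three terms (the stochastic integral in its continuous version, and $\int_\R L_y(t)\,\tau''(dy)$ by dominated convergence) to obtain a single null set valid for all $t$; second, $\tau'$ is only determined up to a countable set, so you should note that the stochastic integral is independent of the chosen version, since the occupation measure of Brownian motion charges no Lebesgue-null set --- this is again the occupation time formula.
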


We conclude this section with a result on continuous Markov additive processes, proved by by Çinlar; see Theorem~(2.23) in~\cite{cinlar:1} and the discussion following~(1.6) in~\cite{cinlar:2}.

\begin{proposition}[see~\cite{cinlar:1}]
\label{prop:map}
If $(X(t), Y(t))$ is a Markov additive process, where $Y(t)$ is the regulator part and $X(t)$ is the additive part, and additionally $X(t)$ has continuous paths, then
\formula{
 \text{the process $X(t)$ is equal in law to the process $W(A(t)) + B(t)$,}
}
where $W(t)$ is the Brownian motion on $\R$ independent from the process $Y(t)$, $A(t)$ is an appropriate positive continuous additive functional of $Y(t)$, and $B(t)$ is an appropriate (signed) continuous additive functional of $Y(t)$.
\end{proposition}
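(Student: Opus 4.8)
The plan is to build the representation on the defining feature of a Markov additive process, namely that conditionally on the whole trajectory of the regulator $Y(t)$, the increments of the additive part $X(t)$ over disjoint time intervals are independent. I would first extract this \emph{conditional independence of increments} from the shift invariance together with the strong Markov property of $(X(t), Y(t))$: using that translating $X$ by $p \in \R$ does not affect the dynamics, the conditional law of $X(t) - X(s)$ given $Y$ depends only on $Y$ restricted to $[s, t]$, and independence across disjoint intervals follows from the Markov property. Granting this, for almost every fixed path of $Y$ the process $X$ is an additive (independent-increment) process with continuous paths. A continuous process with independent increments has no jump component, so by the Lévy--Khintchine description of additive processes each increment is Gaussian; hence, conditionally on $Y$, the process $X$ is Gaussian and is completely described by its conditional mean $B(t)$ and conditional variance $A(t)$. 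Both are continuous in $t$, and the time-homogeneity encoded in the additive-functional property forces the cocycle identities that make $A(t)$ a positive continuous additive functional of $Y$ and $B(t)$ a signed continuous additive functional of $Y$.

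It remains to realise the centred Gaussian part as a time-changed Brownian motion. Conditionally on $Y$, the process $X(t) - B(t)$ is a continuous centred Gaussian process with independent increments and variance function $A(t)$; equivalently, it is a continuous (conditional) martingale whose quadratic variation equals $A(t)$. The Dambis--Dubins--Schwarz theorem then yields a Brownian motion $W$ with $X(t) - B(t) = W(A(t))$, after enlarging the probability space with an auxiliary independent Brownian motion on the event where $A(\infty) < \infty$. The decisive point is that $W$ may be taken independent of $Y$: the conditional law of $X(t) - B(t)$ given $Y$ is, by construction, that of a standard Brownian motion run along the $Y$-measurable clock $A(t)$, so after undoing the time change the conditional law of $W$ given $Y$ is Wiener measure for every trajectory of $Y$. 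Since this conditional law does not depend on the conditioning, $W$ is a Brownian motion independent of $Y$, and the conditional identity promotes to the unconditional statement that $X(t)$ is equal in law to $W(A(t)) + B(t)$.

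I expect the independence claim, rather than the time change itself, to be the main obstacle. The Dambis--Dubins--Schwarz theorem alone produces a Brownian motion adapted to a possibly enlarged filtration that could, in principle, carry information about $Y$; what guarantees independence is precisely the conditional independence of the increments of $X$ given $Y$, that is, the full Markov additive structure and not merely the semimartingale decomposition of $X$ into its bounded-variation part $B(t)$ and its continuous martingale part $W(A(t))$. Making this rigorous --- verifying the cocycle identities for $A$ and $B$, handling the degenerate stretches where $A$ is flat, and checking that the recovered $W$ is genuinely independent of the regulator --- is the content of Çinlar's theorem, which I would invoke in the form cited above.
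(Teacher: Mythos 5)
The paper gives no proof of this proposition at all --- it is quoted directly from \c{C}inlar (Theorem~(2.23) in~\cite{cinlar:1} and~\cite{cinlar:2}) --- and your proposal, which sketches the conditionally-Gaussian-plus-Dambis--Dubins--Schwarz argument and then explicitly defers the rigorous details to \c{C}inlar's theorem, amounts to the same treatment: a correct gloss on the cited result rather than an independent proof. Your outline is mathematically sound and is indeed the skeleton of \c{C}inlar's own argument (conditional independence of increments given the regulator, hence conditional Gaussianity of the continuous additive part, then a time change with the clock $A(t)$ and drift $B(t)$ inherited as additive functionals of $Y$), so the only caveat is that, taken as a self-contained proof, the final appeal to \c{C}inlar would be circular --- exactly as circular as the paper's own citation.
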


\subsection{Reflected and killed Brownian motion}

The Brownian motion in $[0, \infty)$, reflected at $0$, can be constructed as the absolute value of the Brownian motion in $\R$. That is, if $\tilde{Y}(t)$ is the Brownian motion in $\R$, then $Y(t) = |\tilde{Y}(t)|$ is the Brownian motion in $[0, \infty)$, reflected at $0$. Furthermore, by Tanaka's formula, if $\tilde{L}_y(t)$ is the local time for $\tilde{Y}(t)$ at $y$, then $|\tilde{Y}(t)| - \tilde{L}_0(t)$ is the martingale part in the canonical decomposition of the semi-martingale $Y(t)$. 

Local times for the Brownian motion in $[0, \infty)$, reflected at $0$, can be constructed using the general theory, as discussed in Chapter~22 in~\cite{kallenberg}. Alternatively, with the notation of the previous paragraph, we can define $L_y(t) = \tilde{L}_y(t) + \tilde{L}_{-y}(t)$ for $y \ge 0$. In particular, one should keep in mind that $L_0(t) = 2 \tilde{L}_0(t)$, and therefore $Y(t) - \tfrac{1}{2} L_0(t)$ is the martingale part in the canonical decomposition of $Y(t)$.

Suppose now that $R \in (0, \infty]$, and that $Y(t)$ is the Brownian motion in $[0, \infty)$, reflected at $0$. We will often consider the Brownian motion in $[0, R)$, reflected at $0$ and, if $R \in (0, \infty)$, killed at $R$. By this we mean the process $Y(t)$ described above, started at a point in $[0, R)$, and run only up to the first hitting time $T_R$ of $R$. More formally, we define
\formula{
 T_R & = \inf\{t \ge 0 : Y(t) \ge R\} ,
}
and we define $Y_R(t)$ to be equal to $Y(t)$ for $t < T_R$, and equal to the cemetery state for $t \ge T_R$. In the remaining part of the article, we simply call $Y_R(t)$ the \emph{Brownian motion in $[0, R)$}, without specifying the boundary conditions. We will also use the notation $Y(t)$ rather then $Y_R(t)$, and explicitly state that we only consider $t < T_R$ where necessary.

Perfect counterparts of Propositions~\ref{prop:bm:paf} (which we now combine with the occupation time formula of Proposition~\ref{prop:bm:otf}) and~\ref{prop:bm:af} hold for the Brownian motion in $[0, R)$. The proof of the first one is an obvious modification of the argument presented in Theorem~22.25 in~\cite{kallenberg}, and so we simply state the result. The other one is proved in similar way as in~\cite{tanaka}, and we only sketch the argument.

\begin{corollary}
\label{cor:paf}
If $R \in (0, \infty]$, $Y(t)$ is the Brownian motion in $[0, R)$, $L_y(t)$ is the local time of $Y(t)$ at $y$, and $X(t)$ is a positive continuous additive functional of $Y(t)$, then there exists a locally finite non-negative measure $a(dy)$ on $[0, R)$ such that, with probability $1$,
\formula{
 X(t) & = \int_{[0, R)} L_y(t) a(dy)
}
for $t < T_R$. Furthermore, if $b(y)$ is a Borel function on $[0, R)$ such that $b(y) a(dy)$ is a locally finite measure on $[0, R)$, then, with probability $1$,
\formula{
 \int_0^t b(Y(s)) dX(s) & = \int_{[0, R)} L_y(t) b(y) a(dy) .
}
Finally, if $a(dy)$ has a locally integrable density function $a(y)$, then, with probability $1$,
\formula{
 X(t) & = \int_0^t a(Y(s)) ds .
}
\end{corollary}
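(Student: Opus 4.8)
The plan is to reduce the statement to its full-line counterpart, Proposition~\ref{prop:bm:paf}, via the standard construction of the reflected Brownian motion as $Y(t) = \abs{\tilde Y(t)}$, where $\tilde Y(t)$ is a two-sided Brownian motion on $\R$ with local times $\tilde L_y(t)$. Since $X(t)$ is a positive continuous additive functional of $Y(t)$, it depends on the path $\tilde Y$ only through $\abs{\tilde Y}$, and is therefore invariant under the reflection $\tilde Y \mapsto -\tilde Y$. First I would apply Proposition~\ref{prop:bm:paf} to $\tilde Y(t)$ to obtain a locally finite non-negative measure $\tilde a(dy)$ on $\R$ with $X(t) = \int_\R \tilde L_y(t)\, \tilde a(dy)$. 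Applying the measure-preserving reflection of path space (which fixes $X(t)$ and sends $\tilde L_y$ to $\tilde L_{-y}$), I would also obtain $X(t) = \int_\R \tilde L_{-y}(t)\, \tilde a(dy)$ almost surely, so after averaging I may assume that $\tilde a$ is symmetric under $y \mapsto -y$.

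The representation on $[0,R)$ then follows by folding the integral onto the half-line. Using the relation $L_y(t) = \tilde L_y(t) + \tilde L_{-y}(t)$ for $y > 0$ together with $L_0(t) = 2\tilde L_0(t)$, the symmetry of $\tilde a$ gives $\int_\R \tilde L_y(t)\, \tilde a(dy) = \int_{[0,\infty)} L_y(t)\, a(dy)$, where $a(dy)$ coincides with $\tilde a(dy)$ on $(0,\infty)$ and satisfies $a(\{0\}) = \tfrac12\,\tilde a(\{0\})$; the factor $\tfrac12$ at the origin is the one point requiring care. The second formula of the corollary is obtained in exactly the same way: I would apply the corresponding integration formula of Proposition~\ref{prop:bm:paf} to the symmetric Borel function $\tilde b(y) = b(\abs{y})$, and fold the resulting identity onto $[0,R)$. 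The third formula, valid when $a(dy) = a(y)\,dy$, is the occupation time formula of Proposition~\ref{prop:bm:otf} applied to the symmetric locally integrable function $a(\abs{y})$ and again folded onto the half-line.

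The main obstacle is the case $R < \infty$, where $Y(t)$ is killed at $R$ and $X(t)$ is only defined for $t < T_R$, so Proposition~\ref{prop:bm:paf} cannot be applied verbatim. Here I would use that $T_R$ coincides with the first exit time of $\tilde Y$ from the interval $(-R, R)$, so that for $t < T_R$ the path $\tilde Y$ stays in $(-R, R)$ and hence $\tilde L_y(t) = 0$ for all $\abs{y} \ge R$. Consequently only levels $y \in [0, R)$ contribute, and the representing measure is automatically supported on $[0, R)$. To make this rigorous one works with the two-sided Brownian motion killed on exiting $(-R, R)$ and the Revuz correspondence for this killed process, which is the ``obvious modification'' of Theorem~22.25 in~\cite{kallenberg} alluded to above; alternatively, one localises at each level $r < R$, obtains a consistent family of measures $a_r$ on $[0, r)$, and passes to the limit $r \uparrow R$. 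Local finiteness of $a$ on $[0, R)$ then follows from local finiteness of $\tilde a$ on $\R$.
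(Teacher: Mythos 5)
Your reduction-by-folding is correct in substance, but it is worth being precise about where it differs from the paper and where it does not. The paper gives no detailed argument for this corollary: it states that the result follows by an ``obvious modification'' of the proof of Theorem~22.25 in~\cite{kallenberg}, i.e.\@ by re-running that argument directly for the Brownian motion on $[0,R)$ (reflected at $0$, killed at $R$). For $R = \infty$ your route is genuinely different and more self-contained: you deduce the half-line statement from the stated full-line result (Proposition~\ref{prop:bm:paf}) via $Y = |\tilde{Y}|$, symmetrisation of the representing measure, and folding; your bookkeeping at the origin ($a(\{0\}) = \tfrac12 \tilde{a}(\{0\})$, forced by the convention $L_0 = 2\tilde{L}_0$) is exactly right, and the second and third identities follow as you say. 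Incidentally, this folding trick is the one the paper itself uses to prove Corollary~\ref{cor:af}, so the method is very much in the spirit of the text; note also that instead of averaging you could invoke uniqueness of the representing measure, which forces $\tilde{a}$ to be symmetric outright.

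The one weak point is the case $R < \infty$. Folding converts the problem into one about the Brownian motion killed upon exiting $(-R,R)$, but the killing cannot be removed by any reflection: Proposition~\ref{prop:bm:paf} concerns functionals of the Brownian motion on $\R$ defined for all times, while $X$ is defined only for $t < T_R$. Nor can one evade this by extending the stopped functional $X(t \wedge T_R)$ to the unkilled process --- additivity fails as soon as the path returns below $R$ after time $T_R$. So your first remedy, the representation theorem (Revuz correspondence) for the killed process, is the correct one; but observe that it is precisely the same ``obvious modification'' of Kallenberg's argument that the paper appeals to, so at this point the two proofs coincide rather than your argument providing a shortcut. Your fallback localisation, as described, is circular: producing the measure $a_r$ on $[0,r)$ from $X$ restricted to $t < T_r$ is exactly the original problem with $r$ in place of $R$ (a positive continuous additive functional of a Brownian motion reflected at $0$ and killed at $r$), so the limit $r \uparrow R$ cannot get off the ground without the killed-interval representation theorem already in hand.
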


\begin{corollary}
\label{cor:af}
If $R \in (0, \infty]$, $Y(t)$ is the Brownian motion in $[0, R)$, and $X(t)$ is a continuous additive functional of $Y(t)$, then, with probability $1$,
\formula{
 X(t) & = \tau(Y(t)) - \tau(Y(0)) + \int_0^t b(Y(s)) d\dot{Y}(s)
}
for a continuous function $\tau(y)$ on $[0, R)$ such that $\tau(0) = 0$, and a locally square integrable function $b(y)$ on $[0, R)$; here $\dot{Y}(t) = Y(t) - \tfrac{1}{2} L_0(t)$ is the martingale part in the canonical decomposition of the semi-martingale $Y(t)$.
\end{corollary}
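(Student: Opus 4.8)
The plan is to reduce the statement to the already-established real-line version, Proposition~\ref{prop:bm:af}, by realising the reflected Brownian motion as $Y(t) = |\tilde{Y}(t)|$, where $\tilde{Y}(t)$ is the Brownian motion in $\R$, and then transporting the information about $\tilde{Y}$ back to $Y$ by a symmetrisation argument. Throughout, $\tilde{L}_0(t)$ denotes the local time of $\tilde{Y}$ at $0$, so that $L_0(t) = 2 \tilde{L}_0(t)$ and, by Tanaka's formula, the martingale part of $Y$ is $\dot{Y}(t) = |\tilde{Y}(t)| - \tilde{L}_0(t)$, with $d\dot{Y}(s) = \sign(\tilde{Y}(s)) \, d\tilde{Y}(s)$.

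Consider first $R = \infty$. Since $X(t)$ is adapted to the filtration of $Y = |\tilde{Y}|$, which is contained in the filtration of $\tilde{Y}$, and since $X$ is continuous and additive, $X$ is a continuous additive functional of $\tilde{Y}$. Proposition~\ref{prop:bm:af} therefore provides a continuous function $\tilde{\tau}$ on $\R$ with $\tilde{\tau}(0) = 0$ and a function $\tilde{b} \in L^2_{\loc}(\R)$ such that, with probability $1$,
\formula{
 X(t) & = \tilde{\tau}(\tilde{Y}(t)) - \tilde{\tau}(\tilde{Y}(0)) + \int_0^t \tilde{b}(\tilde{Y}(s)) \, d\tilde{Y}(s) .
}
The reflection $\tilde{Y} \mapsto -\tilde{Y}$ preserves the law of $\tilde{Y}$ and leaves $Y = |\tilde{Y}|$, and hence $X$, unchanged. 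Applying it to the displayed identity (the Itô integral transforms in the expected way, as one sees by passing to the limit in Riemann sums) yields a second representation of the same $X$, now in terms of $-\tilde{Y}$. Averaging the two representations allows us to replace $\tilde{\tau}$ by its even part and $\tilde{b}$ by its odd part; we therefore assume $\tilde{\tau}$ is even and $\tilde{b}$ is odd. With this normalisation we set $\tau = \tilde{\tau}|_{[0, \infty)}$ and $b = \tilde{b}|_{[0, \infty)}$; then $\tau(0) = 0$ and $b \in L^2_{\loc}([0, \infty))$. Evenness of $\tilde{\tau}$ gives $\tilde{\tau}(\tilde{Y}(t)) = \tau(Y(t))$, while oddness of $\tilde{b}$ gives $\tilde{b}(\tilde{Y}(s)) = \sign(\tilde{Y}(s)) \, b(|\tilde{Y}(s)|) = b(Y(s)) \sign(\tilde{Y}(s))$, so that
\formula{
 \int_0^t \tilde{b}(\tilde{Y}(s)) \, d\tilde{Y}(s) & = \int_0^t b(Y(s)) \sign(\tilde{Y}(s)) \, d\tilde{Y}(s) = \int_0^t b(Y(s)) \, d\dot{Y}(s) ,
}
and substituting into the representation of $X$ proves the claim for $R = \infty$.

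For finite $R$ the process is killed at $R$, so it suffices to describe $X(t)$ for $t < T_R$; moreover, on $\{t < T_r\}$ with $r < R$, the killed process coincides with the reflected Brownian motion on $[0, \infty)$. I would run the argument above locally: for each $r < R$, the functional $X$ considered before $Y$ leaves $[0, r)$ corresponds through $Y = |\tilde{Y}|$ to a continuous additive functional of the Brownian motion $\tilde{Y}$ killed on exiting $(-r, r)$. The analogue of Proposition~\ref{prop:bm:af} for Brownian motion killed on leaving a bounded interval is obtained exactly as in~\cite{tanaka}: one splits the additive functional into a martingale additive functional, represented by a stochastic integral $\int_0^t \tilde{b}(\tilde{Y}(s)) \, d\tilde{Y}(s)$ (martingale representation, with a deterministic integrand forced by homogeneity), and a continuous additive functional of finite variation, represented as a local-time integral $\tfrac{1}{2} \int L_y(t) \, m(dy)$ via the signed form of Proposition~\ref{prop:bm:paf}, the latter being absorbed into a position term $\tilde{\tau}(\tilde{Y}(t)) - \tilde{\tau}(\tilde{Y}(0))$ by the Itô--Tanaka formula (Proposition~\ref{prop:bm:it}). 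After the same symmetrisation this produces $\tau_r$ on $[0, r]$ and $b_r \in L^2_{\loc}([0, r))$; by uniqueness of the decomposition these are consistent as $r$ varies, and letting $r \uparrow R$ patches them into a continuous $\tau$ on $[0, R)$ with $\tau(0) = 0$ and a function $b \in L^2_{\loc}([0, R))$ representing $X(t)$ for all $t < T_R$.

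The main obstacle is the finite-$R$ case: the localisation and patching just described, and in particular the interval-killed analogue of Proposition~\ref{prop:bm:af}, require care precisely because $\tau$ need only be continuous on the half-open interval $[0, R)$ and $b$ need only be locally square integrable there, so neither is assumed to extend across the killing boundary $R$; this is why I work on each $[0, r)$ separately rather than trying to extend $X$ to an additive functional of the reflected motion on all of $[0, \infty)$. Identifying the finite-variation part of $X$ with an increment $\tau(Y(t)) - \tau(Y(0))$ of a function of the current position is the crux. Once this is in place, the reflection symmetrisation and the matching of the stochastic integral through Tanaka's formula $d\dot{Y}(s) = \sign(\tilde{Y}(s)) \, d\tilde{Y}(s)$ are routine, and the two cases combine to give the asserted representation.
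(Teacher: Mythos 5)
Your proof is correct and follows essentially the same route as the paper's: realise $Y(t) = |\tilde{Y}(t)|$, apply Proposition~\ref{prop:bm:af} to $X(t)$ viewed as a continuous additive functional of $\tilde{Y}(t)$, use the reflection $\tilde{Y} \mapsto -\tilde{Y}$ to reduce to even $\tilde{\tau}$ and odd $\tilde{b}$, and convert $\sign(\tilde{Y}(s))\,d\tilde{Y}(s)$ into $d\dot{Y}(s)$ via Tanaka's formula. The only cosmetic differences are that you symmetrise by averaging the two almost-sure representations where the paper invokes uniqueness of the representing pair $(\tau, b)$, and that for finite $R$ you localise to $[0, r)$ with $r < R$ and patch by uniqueness, where the paper directly cites the variant of Proposition~\ref{prop:bm:af} for the Brownian motion in $(-R, R)$ proved as in Tanaka's paper.
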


\begin{proof}[Sketch of the proof]
Suppose first that $R = \infty$. Changing the probability space if necessary, we may assume that $Y(t) = |\tilde{Y}(t)|$ for a Brownian motion $\tilde{Y}(t)$ in $\R$, and so $X(t)$ is an additive functional of $\tilde{Y}(t)$. By Proposition~\ref{prop:bm:af}, we have
\formula{
 X(t) & = \tau(Y(t)) - \tau(Y(0)) + \int_0^t b(\tilde{Y}(s)) d\tilde{Y}(s) .
}
Furthermore, the functions $\tau(y)$ and $b(y)$ are determined uniquely by $X(t)$ (see~\cite{tanaka}). Observe that if we replace $\tilde{Y}(t)$ by $-\tilde{Y}(t)$, the processes $Y(t)$ and $X(t)$ remain unchanged. Thus, $\tau(y) = \tau(|y|)$ and $b(y) = b(|y|) \sign y$. Finally, $d\tilde{Y}(s) = \sign(\tilde{Y}(s)) dY_s - \tfrac{1}{2} dL_0(s)$, and the desired result follows.

If $R \in (0, \infty)$, the argument is essentially the same, except that we need a variant of Proposition~\ref{prop:bm:af} for the Brownian motion in $(-R, R)$ rather than in $\R$. This is proved exactly in the same way as in~\cite{tanaka}, and so we omit the details.
\end{proof}

The Itô--Tanaka formula given in Proposition~\ref{prop:bm:it} also has its variant for the Brownian motion in $[0, R)$. The proof is a relatively simple modification of the argument given in Theorem~22.5 in~\cite{kallenberg}, and therefore we omit it.

\begin{corollary}
\label{cor:it}
If $R \in (0, \infty]$, $Y(t)$ is the Brownian motion in $[0, R)$, $L_y(t)$ is the local time of $Y(t)$ at $y$, and $\tau(y)$ is a function on $[0, R)$ such that the second distributional derivative of $\tau(y)$ corresponds to a locally finite measure on $[0, R)$, denoted by $\tau''(dy)$, then for $t < T_R$ we have, with probability $1$,
\formula{
 \tau(Y(t)) & = \tau(Y(0)) + \int_0^t \tau'(Y(s)) dY(s) + \frac{1}{2} \int_{(0, R)} L_y(t) \tau''(dy) .
}
\end{corollary}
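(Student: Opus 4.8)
The plan is to deduce the formula from its counterpart on the whole line, Proposition~\ref{prop:bm:it}, by realising the reflected Brownian motion as the modulus of a Brownian motion on $\R$ and carefully accounting for the resulting boundary term at the origin. I first treat the case $R = \infty$. Since $\tau''$ is a locally finite measure, $\tau'$ admits a version of locally bounded variation, and in particular the right limit $\tau'(0^+)$ exists; I write $\tau'(0)$ for this value below. Changing the probability space if necessary, I realise $Y(t) = |\tilde{Y}(t)|$ for a Brownian motion $\tilde{Y}(t)$ on $\R$ with local times $\tilde{L}_y(t)$, so that $L_y(t) = \tilde{L}_y(t) + \tilde{L}_{-y}(t)$ for $y \ge 0$ and, in particular, $L_0(t) = 2 \tilde{L}_0(t)$. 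I then extend $\tau$ to an even function $\tilde{\tau}(y) = \tau(|y|)$ on $\R$.

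Next I would compute the second distributional derivative $\tilde{\tau}''(dy)$. On $(0, \infty)$ it coincides with $\tau''$, on $(-\infty, 0)$ it is the image of $\tau''$ under $y \mapsto -y$, and the kink of $\tilde{\tau}$ at the origin, where the one-sided derivatives are $\pm \tau'(0)$, contributes an atom of mass $2 \tau'(0)$ at $0$. Applying Proposition~\ref{prop:bm:it} to $\tilde{\tau}$ and $\tilde{Y}$ then yields an identity whose three terms I rewrite in turn. The left-hand side is $\tilde{\tau}(\tilde{Y}(t)) = \tau(Y(t))$, and similarly for the initial value. For the stochastic integral, Tanaka's formula combined with $L_0 = 2 \tilde{L}_0$ and $\dot{Y}(t) = Y(t) - \tfrac12 L_0(t)$ gives $d\dot{Y}(s) = \sign(\tilde{Y}(s)) \, d\tilde{Y}(s)$; since $\tilde{\tau}'(y) = \tau'(|y|) \sign y$, the integral $\int_0^t \tilde{\tau}'(\tilde{Y}(s)) \, d\tilde{Y}(s)$ becomes $\int_0^t \tau'(Y(s)) \, d\dot{Y}(s)$. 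For the local-time term $\tfrac12 \int_\R \tilde{L}_y(t) \tilde{\tau}''(dy)$ I split the integral into the atom at $0$ and the two half-lines; the substitution $y \mapsto -y$ on $(-\infty, 0)$ together with $L_y = \tilde{L}_y + \tilde{L}_{-y}$ and $L_0 = 2 \tilde{L}_0$ turns it into $\tfrac12 \tau'(0) L_0(t) + \tfrac12 \int_{(0, R)} L_y(t) \tau''(dy)$.

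Collecting the pieces, the identity takes the form
\formula{
 \tau(Y(t)) & = \tau(Y(0)) + \int_0^t \tau'(Y(s)) \, d\dot{Y}(s) + \tfrac12 \tau'(0) L_0(t) + \tfrac12 \int_{(0, R)} L_y(t) \tau''(dy) .
}
To finish, I convert the martingale integral back to an integral against $Y$: since $d\dot{Y}(s) = dY(s) - \tfrac12 \, dL_0(s)$ and $L_0(s)$ increases only on $\{Y(s) = 0\}$, where $\tau'(Y(s)) = \tau'(0)$, one has $\int_0^t \tau'(Y(s)) \, d\dot{Y}(s) = \int_0^t \tau'(Y(s)) \, dY(s) - \tfrac12 \tau'(0) L_0(t)$. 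Substituting this, the two boundary terms $\tfrac12 \tau'(0) L_0(t)$ cancel and the asserted formula follows. The step I expect to need the most care is exactly this cancellation at the origin: one must keep track of the atom of $\tilde{\tau}''$ at $0$, the factors of two hidden in $L_0 = 2 \tilde{L}_0$ and $\dot{Y} = Y - \tfrac12 L_0$, and the convention $\tau'(0) = \tau'(0^+)$, and verify that the contributions combine so that the remaining local-time integral is over the open interval $(0, R)$.

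Finally, for $R \in (0, \infty)$ the claimed identity only concerns $t < T_R$, for which $\sup_{s \le t} Y(s) < R$, so it is local and I would obtain it by localisation: for each $r < R$ I apply the case already proved to a function that agrees with $\tau$ on $[0, r]$ and has locally finite second derivative on $[0, \infty)$, run the process up to the hitting time $T_r$, and let $r \uparrow R$, noting that $L_y(t)$ vanishes for $y > \sup_{s \le t} Y(s)$.
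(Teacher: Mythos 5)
Your proof is correct, and it takes a somewhat different route from the one the paper has in mind: the paper omits the proof of Corollary~\ref{cor:it} entirely, indicating only that one should modify Kallenberg's proof of Theorem~22.5 in~\cite{kallenberg} (i.e.\@ of Proposition~\ref{prop:bm:it}) to the reflected setting. You instead use Proposition~\ref{prop:bm:it} as a black box and fold the line onto the half-line via $Y = |\tilde{Y}|$, $\tilde{\tau}(y) = \tau(|y|)$ --- which is precisely the device the paper itself employs in its sketch of Corollary~\ref{cor:af} --- so your argument stays within the paper's toolkit while being more self-contained, since it never reopens the approximation argument behind the whole-line Itô--Tanaka formula. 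The bookkeeping at the origin, which is the only delicate point, is handled correctly: the even extension $\tilde{\tau}$ has the atom $2\tau'(0^+)\,\delta_0$ in $\tilde{\tau}''$; with the normalisations $L_0(t) = 2\tilde{L}_0(t)$ and $\dot{Y}(t) = Y(t) - \tfrac{1}{2}L_0(t)$ this yields the term $\tfrac{1}{2}\tau'(0^+)L_0(t)$, which exactly cancels the correction $-\tfrac{1}{2}\tau'(0^+)L_0(t)$ produced when passing from $d\dot{Y}$ back to $dY$. Your explicit convention $\tau'(0) := \tau'(0^+)$ is indeed essential here: the measure $dL_0$ charges $\{s : Y(s) = 0\}$, so, unlike the $d\dot{Y}$- and $d\tilde{Y}$-integrals (which do not see the integrand's value on a Lebesgue-null set of times), this term genuinely depends on the value assigned at $0$. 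The localisation for $R \in (0,\infty)$ is also sound: on $\{t < T_r\}$ the path stays in $[0, r)$, hence $L_y(t) = 0$ for $y \ge r$ and the identity for the extended function coincides term by term with the desired one, and letting $r \uparrow R$ exhausts $\{t < T_R\}$.
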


We conclude this section with the expression for the mean value of the local time of the Brownian motion in $[0, R)$ up to its life-time. It is a straightforward consequence of the occupation time formula in Corollary~\ref{cor:paf}, once the density of the occupation measure is identified with the Green's function for the generator. We omit the proof, and refer to Chapter~24 in~\cite{kallenberg} for further discussion. The result can also be obtained from the expression for the density function of $L_y(T_R)$, see formula~3.2.3.2 in~\cite{bs}.

\begin{proposition}
\label{prop:otd}
If $Y(t)$ is the Brownian motion in $[0, \infty)$, $R > 0$ and $y, z \in [0, R)$, then
\formula{
 \ex^y L_z(T_R) & = (R - y) \wedge (R - z) ,
}
where $\ex^y$ is the expectation corresponding to the process $Y(t)$ started at $Y(0) = y$.
\end{proposition}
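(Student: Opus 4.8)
The plan is to recognise $\ex^y L_z(T_R)$ as the density of the expected occupation measure of the reflected Brownian motion killed at $R$, that is, as the Green's function of its generator $\tfrac12\frac{d^2}{dy^2}$ on $(0,R)$ with a Neumann condition at the reflecting endpoint $0$ and a Dirichlet condition at the killing endpoint $R$. First I would fix a bounded Borel function $f$ on $[0,R)$ and apply the occupation time formula (Corollary~\ref{cor:paf} combined with Proposition~\ref{prop:bm:otf}) at the time $T_R$; taking $\ex^y$ of both sides and using Fubini gives
\[
 \ex^y \int_0^{T_R} f(Y(s))\,ds = \int_{[0,R)} f(z)\, \ex^y L_z(T_R)\, dz .
\]
Thus $z \mapsto \ex^y L_z(T_R)$ is precisely the occupation density, and it remains to compute it.

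Next I would identify the left-hand side with $\int_{[0,R)} G_R(y,z) f(z)\,dz$, where $u(y) = \ex^y \int_0^{T_R} f(Y(s))\,ds$ solves the boundary value problem $\tfrac12 u'' = -f$ on $(0,R)$ subject to $u'(0)=0$ and $u(R)=0$; this is the standard Dynkin characterisation of an expected additive functional, valid here because $\ex^y T_R < \infty$ when $R<\infty$. Solving for the kernel $G_R(\cdot,z)$ is elementary: off the source it is affine, the Neumann condition forces it to be constant on $[0,z)$, the Dirichlet condition forces it to be proportional to $R-y$ on $(z,R)$, and continuity together with the jump in the derivative produced by the delta source pins down the two constants. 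This yields a piecewise linear kernel proportional to $(R-y)\wedge(R-z)$, and matching the normalisation of the local time fixed by Tanaka's formula determines the multiplicative constant and gives the stated identity.

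The main technical points are the justification that $u$ solves the stated boundary value problem --- in particular that the reflecting endpoint contributes a Neumann rather than a Dirichlet condition, which is cleanest to verify by writing $Y(t)=\tabs{\tilde{Y}(t)}$ and reducing to the symmetric Green's function of Brownian motion on $(-R,R)$ killed on exit --- and the careful bookkeeping of the factors coming from the $\tfrac12$ in the generator and from the convention $L_0(t)=2\tilde{L}_0(t)$. Finally, since both sides are continuous in $y$ and $z$, the identity obtained for almost every $z$ upgrades to all $z$. As an independent check one can bypass the Green's function altogether: applying Tanaka's formula to $\tabs{Y(t)-z}$, stopping at $T_R$, and taking expectations yields a linear relation between $\ex^y L_z(T_R)$ and $\ex^y L_0(T_R)$ which the claimed formula is readily seen to satisfy.
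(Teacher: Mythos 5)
Your route --- the occupation time formula, identification of $z \mapsto \ex^y L_z(T_R)$ with the Green's function of $\tfrac{1}{2}\frac{d^2}{dy^2}$ on $(0,R)$ with Neumann condition at the reflecting end and Dirichlet condition at $R$, and an explicit piecewise-linear solve --- is exactly the argument the paper sketches (the paper omits the details, pointing to Kallenberg and to formula~3.2.3.2 in Borodin--Salminen). The structural points you address (why reflection gives a Neumann condition, finiteness of $\ex^y T_R$, continuity upgrading the a.e.\ identity in $z$) are all sound.

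The gap is at the one step you defer: the multiplicative constant, which is the entire content of the statement. Under Tanaka's normalisation --- which is the convention the paper's preliminaries actually fix, $L_y(t) = \tilde{L}_y(t) + \tilde{L}_{-y}(t)$, so that $L_y$ is the occupation density with respect to Lebesgue measure --- the source equation $\tfrac{1}{2}\partial_{yy} G_R(\cdot,z) = -\delta_z$ forces the derivative jump $\partial_y G_R(z^+,z) - \partial_y G_R(z^-,z) = -2$, and the piecewise-linear solve yields
\[
 G_R(y,z) \;=\; 2\bigl((R-y)\wedge(R-z)\bigr) ,
\]
i.e.\ \emph{twice} the stated formula. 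A quick consistency check: integrating over $z \in (0,R)$ must return $\ex^y T_R = R^2 - y^2$, whereas $\int_0^R (R-y)\wedge(R-z)\,dz = (R^2-y^2)/2$. Your proposed independent check has the same defect: Tanaka's formula for $|Y(t)-z|$, with $\dot{Y}(t) = Y(t) - \tfrac{1}{2}L_0(t)$ the martingale part, gives $\ex^y L_z(T_R) = (R-z) - |y-z| + \tfrac{1}{2}\ex^y L_0(T_R)$, and optional stopping applied to $\dot{Y}$ gives $\ex^y L_0(T_R) = 2(R-y)$; together these reproduce the doubled formula, so this check \emph{refutes} rather than confirms the stated identity under the normalisation you invoke. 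The formula as printed is the one for the local time normalised by the speed measure $m(dy)=2\,dy$ (the Borodin--Salminen convention of formula~3.2.3.2), which is half of the Tanaka local time. So the sentence ``matching the normalisation of the local time fixed by Tanaka's formula \dots gives the stated identity'' is precisely the step that fails: carried out honestly, your argument proves the identity with an extra factor $2$, or proves it as stated but for a differently normalised local time. (In fairness, this mismatch is inherited from the paper itself, whose preliminaries fix the Tanaka convention while the Proposition is quoted in the Borodin--Salminen one; nothing downstream breaks, since the estimates~\eqref{eq:ests} only need finiteness and an $R$-dependent bound. But a complete proof must pin the constant to a declared convention, and yours pins it to the wrong one.)
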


\subsection{Inverse local time}

If $A(t)$ is a strictly increasing function on $[0, T)$, then we denote by $A^{-1}(u)$ the inverse function of $A(t)$, defined on $[A(0), A(T^-))$. If $A(t)$ is merely non-increasing on $[0, T)$, we often consider the right-continuous generalised inverse function $A^{-1}(u)$, defined for $u \in [0, A(T^-))$ by
\formula{
 A^{-1}(u) & = \sup \{t \in [0, T) : A(t) \le u\} .
}
If $A(t)$ is a positive additive functional of the Brownian motion (or, more generally, a positive strong Markov additive functional of a Markov process), then the generalised right-continuous inverse $A^{-1}(u)$ is a family of Markov times, and if $A(t)$ is the local time at a given point, then $A^{-1}(u)$ is a subordinator; see, for example, Sections~68 and~74 in~\cite{sharpe}.

\subsection{Excursions and exit systems}
\label{sec:exc}

Let $Y_t$ be the Brownian motion in $[0, \infty)$, reflected at $0$. In this section we assume that $Y(0) = 0$. Let $L_y(t)$ be the local time of $Y(t)$ at $y$, continuous with respect to $y$. Finally, let $L_0^{-1}(u)$ denote the right-continuous generalised inverse of $L_0(t)$.

By an \emph{excursion} of $Y(t)$ (away from the origin), we mean a continuous path $\eps(t)$, defined for $t \ge 0$, such for some $\zeta \ge 0$, called the \emph{life-time} of $\eps(t)$, we have $\eps(0) = 0$, $\eps(t) > 0$ for $t \in (0, \zeta)$, and $\eps(t) = 0$ for $t \ge \zeta$. For every $u$ such that $L_0^{-1}(u^-) < L_0^{-1}(u)$ we define the corresponding excursion as
\formula{
 \eps_u(t) & = Y((\alpha_u + t) \wedge \beta_u) , & \alpha_u & = L_0^{-1}(u^-) , & \beta_u & = L_0^{-1}(u) .
}
Clearly, the lifetime of $\eps_u(t)$ is equal to $\zeta_u = \beta_u - \alpha_u$. For completeness, for the remaining values of $u$, we define $\eps_u(t)$ to be the empty excursion $\eps_u(t) = 0$, with zero life-time.

With probability $1$, the path of $Y(t)$ can be decomposed into a countable number of (non-empty) excursions $\eps_u(t)$. The excursion process $\eps_u$ is a Poisson point process with values in the class of all possible excursions, with intensity measure denoted by $\exc(d\eps)$, and called the \emph{excursion measure}. Furthermore, for every non-negative measurable functional $\Phi(u, \eps)$ of $u \in [0, \infty)$ and an excursion $\eps$, we have
\formula{
 \ex \sum_{u \in [0, \infty)} \Phi(u, \eps_u) & = \int_0^\infty \biggl(\int \Phi(u, \eps) \exc(d\eps)\biggr) du ;
}
here we assume that $\Phi(u, \eps) = 0$ if $\eps$ is the empty excursion. We refer to~\cite{im,kallenberg,ks,ry,sharpe,yy} for further discussion and details.

We record that with the above normalisation, if $R \in (0, \infty)$ and $E_R$ denotes the class of excursions $\eps(t)$ that never reach $R$, then $\exc(E_R^c) = 1 / R$; see, for example, Proposition~XII.3.6 and Exercise~XII.2.10.1 in~\cite{ry}.

If $(X(t), Y(t))$ is a shift-invariant diffusion in $\R \times [0, \infty)$ started at $(0, 0)$, we define an excursion of $(X(t), Y(t))$ (away from $\R \times \{0\}$) to be a continuous path $(\delta(t), \eps(t))$ started at $(0, 0)$, taking values in $\R \times (0, \infty)$ for $t \in (0, \zeta)$, and equal to $(x, 0)$ for $t \ge \zeta$, where $x \in \R$ is arbitrary. Suppose that $(X(t), Y(t))$ is a regular shift-invariant diffusion in $\R \times [0, \infty)$. With probability $1$, the path of $(X(t), Y(t))$ can be decomposed into a countable number of (non-empty) excursions $(\delta_u(t), \eps_u(t))$, defined by
\formula{
 \delta_u(t) & = X((\alpha_u + t) \wedge \beta_u) - X(\alpha_u) , & \eps_u(t) & = Y((\alpha_u + t) \wedge \beta_u)
}
whenever $\alpha_u < \beta_u$. Here $L_0(t)$, $\alpha_u = L_0^{-1}(u^-)$ and $\beta_u = L_0^{-1}(u)$ are defined as in the previous paragraphs, and again we define $\delta_u(t) = \eps_u(t) = 0$ to be the empty excursion whenever $\alpha_u = \beta_u$. The excursion process $(\delta_u, \eps_u)$ for $(X(t), Y(t))$ is clearly an extension of the excursion process $\eps_u$ for $Y(t)$. As before, there is a measure $\exit(d\delta, d\eps)$ such that
\formula{
 \ex \sum_{u \in [0, \infty)} \Phi(u, X(\alpha_u), \delta_u, \eps_u) & = \ex \int_0^\infty \biggl(\int \Phi(u, X(\alpha_u), \delta, \eps) \exit(d\delta, d\eps)\biggr) du
}
for every non-negative measurable functional $\Phi(u, x, \delta, \eps)$ of the excursion $(\delta, \eps)$, the local time $u \in [0, \infty)$, and the initial position $x$; we assume here that $\Phi(u, x, \delta, \eps) = 0$ whenever, $(\delta, \eps)$ is an empty excursion.

The measure $\exit(d\delta, d\eps)$ (or, more precisely, the system of similarly defined measures for excursions starting at $(x, 0)$ for an arbitrary $x \in \R$) is called the \emph{exit system} of $(X(t), Y(t))$. We refer to Theorem~74.12 in~\cite{sharpe} for a general result on exit systems, and to~\cite{maisonneuve} and Section~74 in~\cite{sharpe} for a detailed discussion and further references.

If $(X(t), Y(t))$ is a regular shift-invariant diffusion in $\R \times [0, R)$ for some $R \in (0, \infty)$, then, for notational convenience, we extend $Y(t)$ past its life-time $T_R$ to the Brownian motion in $[0, \infty)$. In this case we have the same decomposition into excursions as above, but this time we terminate at the first excursion of $Y(t)$ which reaches the level $R$. More formally, we have
\formula[eq:exit]{
 \ex \sum_{u \in [0, L_0(T_R))} \Phi(u, X(\alpha_u), \delta_u, \eps_u) & = \ex \int_0^{L_0(T_R)} \biggl(\int \ind_{E_R}(\eps) \Phi(u, X(\alpha_u), \delta, \eps) \exit(d\delta, d\eps)\biggr) du ,
}
where $E_R$ is the class of excursions $\eps(t)$ which never reach $R$.

\subsection{Additive functionals of excursions}

We consider the excursions of the Brownian motion $Y(t)$ in $[0, \infty)$. The local times of an excursion $\eps(t)$ are defined in the same way as for the usual Brownian motion. More precisely, for every $r > 0$, under the excursion measure $\exc(d\eps)$, the process $\eps(r + t)$ is the Brownian motion in $[0, \infty)$ absorbed at $0$, with the initial value given by the appropriate (non-probabilistic) entrance law $\exc(\eps(r) \in dy)$. Therefore, the local times of $\eps(r + t)$ are well-defined for every $r > 0$ outside of an event of zero excursion measure. The local times $L_{\eps,y}(t)$ of $\eps(t)$ are obtained by passing to the limit as $r \to 0^+$.

For the excursions $\eps_u(t)$ of $Y(t)$ and $y > 0$, up to an event of zero probability, we have
\formula{
 L_{\eps_u,y}(t) & = L_y((\alpha_u + t) \wedge \beta_u) - L_y(\alpha_u) .
}
By this we mean that with probability $1$, the above equality holds for every $u$, $t$ and $y$. Therefore, by Fubini's theorem, for every locally finite measure $a(dy)$ on $[0, \infty)$,
\formula{
 \int_{[0, \infty)} L_{\eps_u,y}(t) a(dy) & = \int_{[0, \infty)} L_y((\alpha_u + t) \wedge \beta_u) a(dy) - \int_{[0, \infty)} L_y(\alpha_u) a(dy) .
}
The same formula holds for every $u < L_0(T_R)$ if $a(dy)$ is a locally finite measure on $[0, R)$ for some $R \in (0, \infty)$.

In a similar way, the Itô integral over the excursion $\eps$ is defined in terms of the usual Itô integral for the Brownian motion in $[0, \infty)$. More precisely, for any predictable process $A(t)$ (with respect to the natural filtration of the excursion $\eps(t)$) such that
\formula{
 \exc\biggl(\int_0^\zeta (A(t))^2 dt = \infty\biggr) & = 0 ,
}
we have (with the limit in the sense of convergence in probability)
\formula{
 \int_0^\zeta A(t) d\eps(t) & = \lim_{r \to 0^+} \int_r^\zeta A(t) d\eps(t) ,
}
and the integral in the right-hand side is the usual Itô integral with respect to the Brownian motion $\eps(r + t)$ in $[0, \infty)$, absorbed at $0$, with the initial distribution given by the appropriate entrance law.

As before, for the excursions $\eps_u(t)$ of $Y(t)$ we have, outside of an event of probability zero,
\formula{
 \int_0^{\zeta_u} A(\alpha_u + t) d\eps_u(t) & = \int_{\alpha_u}^{\beta_u} A(t) dY(t) = \int_{\alpha_u}^{\beta_u} A(t) d\dot{Y}(t) ,
}
where $\dot{Y}(t) = Y(t) - \tfrac{1}{2} L_0(t)$ is the martingale part in the canonincal decomposition of the semi-martingale $Y(t)$. The same equality holds for every $u < L_0(T_R)$ if we only assume that
\formula{
 \exc\biggl(E_R , \, \int_0^\zeta (A(t))^2 dt = \infty\biggr) & = 0 ,
}
where $E_R$ is the class of excursions $\eps(t)$ which never reach $R$.

%
%

\section{One-dimensional diffusions and Krein's strings}
\label{sec:one}

The relation between one-dimensional diffusions and second-order differential operators is well understood, and we refer to~\cite{im} for a detailed discussion. Generalised diffusions and Krein's spectral theory of strings are thoroughly discussed in~\cite{dm,knight,kw}. Here we only briefly sketch the relevant part of the theory.

Let $R \in (0, \infty]$, and let $Y(t)$ be a diffusion in $[0, R)$ such that every two states of $[0, R)$ communicate with each other. We assume that $Y(t)$ is not killed inside $[0, R)$. More precisely, we assume that the life-time of $Y(t)$, denoted by $T_R$, is either infinite, or we have $Y(T_R^-) = R$. Such a diffusion is completely described by the corresponding \emph{speed measure} $a(dy)$ and \emph{scale function} $b(y)$. By considering the process $b(Y(t))$ rather than $Y(t)$, we may restrict our attention to diffusions $Y(t)$ with scale function $b(y) = y$. In this case we say that $Y(t)$ is a diffusion in \emph{Brownian scale}; and the procedure described above will be called \emph{change of scale}. We refer to Proposition~VII.3.4 in~\cite{ry} for a detailed statement; see also Chapter~3 in~\cite{im}.

If $Y(t)$ is a diffusion in Brownian scale, then $Y(t)$ is a local martingale up to the hitting time of $0$ or $R$, and it can be represented as the appropriately time-changed Brownian motion in $[0, R)$ (reflected at $0$). More precisely,
\formula{
 Y(t) & = \tilde{Y}(\tilde{A}^{-1}(t)) ,
}
where $\tilde{Y}(t)$ is the Brownian motion in $[0, R)$, $\tilde{A}(t)$ is an increasing continuous additive functional of $\tilde{Y}(t)$, and $\tilde{A}^{-1}(t)$ denotes the inverse of $\tilde{A}(t)$. Here we remark that if we allow $\tilde{A}(t)$ to be merely a positive (i.e.\@ non-decreasing) additive functional, and if $\tilde{A}^{-1}(t)$ denotes the right-continuous generalised inverse of $\tilde{A}(t)$, then $Y(t)$ defined as above is a \emph{generalised diffusion}, or a \emph{gap diffusion}. We refer to Section~5.3 in~\cite{im} or Section~V.1 in~\cite{ry} for further details in the diffusion case, and to~\cite{knight} and Section~3 in~\cite{kw} for a discussion of generalised diffusions.

Let $\tilde{L}_y(t)$ denote the local time of $\tilde{Y}(t)$ at $y \in [0, R)$. By Corollary~\ref{cor:paf}, the additive functional $\tilde{A}(t)$ can be represented as
\formula{
 \tilde{A}(t) & = \int_{[0, R)} \tilde{L}_y(t) \tilde{a}(dy)
}
for some locally finite non-negative measure $\tilde{a}(dy)$ on $[0, R)$. Conversely, any such measure $\tilde{a}(dy)$ gives rise to a positive continuous additive functional $\tilde{A}(t)$, and thus to a generalised diffusion $Y(t)$. The functional $\tilde{A}(t)$ is strictly increasing (and consequently $Y(t)$ is a true diffusion) if and only if $\tilde{a}(dy)$ has full support in $[0, R)$. The measure $\tilde{a}(dy)$ (together with the number $R$) is called \emph{Krein's string}.

Let $L_0(t)$ denote the following variant of the local time of $Y(t)$ at $0$:
\formula{
 L_0(t) & = \tilde{L}_0(\tilde{A}^{-1}(t)) .
}
The right-continuous generalised inverse of $L_0(t)$ is a subordinator, and
\formula{
 L_0^{-1}(t) & = \tilde{A}(\tilde{L}_0^{-1}(t)) .
}
Krein proved an analytical variant of the following result: the subordinator $L_0^{-1}(t)$ has completely monotone jumps, and conversely, every subordinator with completely monotone jumps arises in this way for a unique Krein's string $\tilde{a}(dy)$ (or, equivalently, for a unique generalised diffusion $\tilde{Y}(t)$); see~\cite{kk}. We refer to~\cite{knight} and~\cite{kw} for a detailed discussion and the above probabilistic statement.

Krein's correspondence is very inexplicit, at least in one direction: there is no known procedure to construct Krein's string $\tilde{a}(dy)$ which corresponds to a given subordinator $L_0^{-1}(t)$. The other direction is, at least theoretically, simpler: given $\tilde{a}(dy)$, the Laplace exponent $\psi(\xi)$ of $L_0^{-1}(t)$ can be found by solving the Sturm--Liouville-type ordinary differential equation:
\formula[eq:krein]{
 & \begin{cases}
 \phi''(dy) = \xi \phi(y) a(dy) , \\
 \phi(0) = 1 , \\
 \phi(R^-) = 0
 \end{cases}
}
(with the second derivative of $\phi$ understood in the sense of distributions). In this case $\psi(\xi) = -\phi'(0^+) + \tilde{a}(\{0\}) \xi$. Again we refer to~\cite{knight} and~\cite{kw} for further details; see also Chapter~15 in~\cite{ssv}, or Section~3 and Appendix~A in~\cite{km}.

In the above presentation we used the identification of positive continuous additive functionals $\tilde{A}(t)$ of the Brownian motion $\tilde{Y}(t)$ on $[0, R)$ with generalised diffusions $Y(t)$ on $[0, R)$. In the next section, we will see Krein's result from a different perspective: we will consider the Brownian motion $Y(t)$ with a positive continuous additive functional $A(t)$, and identify this pair with a \emph{symmetric} shift-invariant diffusion. Full statement of Theorem~\ref{thm:main} will require an extension of Krein's result to a more general variant of~\eqref{eq:krein}, studied by Eckhardt and Kostenko in~\cite{ek}.

%
%

\section{Shift-invariant diffusions in half-plane}
\label{sec:two}

\subsection{Reduction}
\label{sec:two:red}

We assume that $(X(t), Y(t))$ is a shift-invariant diffusion in $\R \times [0, R)$, as in Definition~\ref{def:diff}. We are interested in the trace left by $(X(t), Y(t))$ on $\R \times \{0\}$, that is, the process
\formula{
 Z(u) & = X(L_0^{-1}(u)) ,
}
where $L_0(t)$ is the local time of $Y(t)$ at $0$ and $L_0^{-1}(u)$ denotes the right-continuous generalised inverse of $L_0(t)$.

As in the previous section, by change of scale, with no loss of generality we may assume that $Y(t)$ is a diffusion in Brownian scale. That is, we assume that the scale function $b(y)$ corresponding to $Y(t)$ is given by $b(y) = y$: in the general case, we simply consider $(X(t), b(Y(t)))$ rather than $(X(t), Y(t))$. Note that these two processes leave the same trace on $\R \times \{0\}$, perhaps up to a linear change of time caused by various possible normalisations of the local time of $Y(t)$ at $0$.

Again as in the previous section, $Y(t)$ is a time-changed Brownian motion in $[0, R)$ (reflected at $0$): we have
\formula{
 Y(t) & = \tilde{Y}(\tilde{A}^{-1}(t))
}
for some Brownian motion $\tilde{Y}(t)$ in $[0, R)$ and an increasing continuous additive functional $\tilde{A}(t)$ of $\tilde{Y}(t)$.

Let $\tilde{L}_y(t)$ denote, as usual, the local time of $\tilde{Y}(t)$ at~$y$. A variant of the local time of $Y(t)$ at $0$ is given by $L_0(t) = \tilde{L}_0(\tilde{A}^{-1}(t))$, and the process $L_0^{-1}(u) = \tilde{A}(\tilde{L}_0^{-1}(u))$ is a subordinator (with completely monotone jumps), Furthermore,
\formula[eq:timechange]{
 Z(u) & = X(L_0^{-1}(u)) = X(\tilde{A}(\tilde{L}_0^{-1}(u)) .
}
Recall that $\tilde{Y}(t) = Y(\tilde{A}(t))$, and define $\tilde{X}(t) = X(\tilde{A}(t))$. Then $\tilde{A}(t)$ is a continuous, increasing family of Markov times for $Y(t)$, and thus the process $(\tilde{X}(t), \tilde{Y}(t))$ is a shift-invariant diffusion; see Section III.21 in~\cite{rw} for further details. By considering this process, rather than $(X(t), Y(t))$, with no loss of generality we may assume that $\tilde{A}(t) = t$ and $Y(t)$ is the Brownian motion in $[0, R)$. Indeed: by~\eqref{eq:timechange}, the traces $Z(u)$ left on $\R \times \{0\}$ by $(X(t), Y(t))$ and $(\tilde{X}(t), \tilde{Y}(t))$ are identical. This procedure will be called \emph{change of time}.

By change of scale and change of time, with no loss of generality we assume that $Y(t)$ is the Brownian motion on $[0, R)$. As remarked in the introduction, $(X(t), Y(t))$ is a Markov additive process. By Proposition~\ref{prop:map}, the process $X(t) - X(0)$ is equal in law to $W(A(t)) + B(t)$, where $W(t)$ is the Brownian motion on $\R$ which is independent from $Y(t)$, $A(t)$ and $B(t)$ are continuous additive functionals of $Y(t)$, and $A(t)$ is positive. Changing the probability space if necessary, with no loss of generality we may assume that in fact $X(t) - X(0) = W(A(t)) + B(t)$.

By Corollary~\ref{cor:af}, with probability $1$,
\formula{
 B(t) & = \tau(Y(t)) - \tau(Y(0)) + \int_0^t b(Y(s)) d\dot{Y}(s)
}
for a continuous function $\tau$ on $[0, R)$ such that $\tau(0) = 0$, and a locally square integrable function $b$ on $[0, R)$. Here $\dot{Y}(s)$ is the martingale part in the canonical decomposition of the semi-martingale $Y(t)$, that is, $\dot{Y}(t) = Y(t) - \tfrac{1}{2} L_0(t)$, where $L_0(t)$ is the local time of $Y(t)$ at $0$. Observe that the process
\formula{
 X(t) - \tau(Y(t)) & = X(0) + \tau(Y(0)) + W(A(t)) + \int_0^t b(Y(s)) d\dot{Y}(s)
}
(with $t < T_R$) is a local martingale. By considering the process $(X(t) - \tau(Y(t)), Y(t))$ rather than $(X(t), Y(t))$, with no loss of generality we may assume that $\tau(y) = 0$ for every $y \in [0, R)$, and thus $X(t)$ is a local martingale. Indeed: the two processes again leave the same trace on $\R \times \{0\}$. This last simplifying procedure will be called \emph{shearing}.

Change of scale, change of time and shearing allow us to reduce any shift-invariant diffusion to a regular shift-invariant diffusion $(X(t), Y(t))$, without changing the trace left by $(X(t), Y(t))$ on $\R \times \{0\}$. For this reason, in the remaining part of the article we restrict our attention to regular shift-invariant diffusions in $\R \times [0, R)$, where $R \in (0, \infty]$.

The above discussion additionally implies that the horizontal coordinate $X(t)$ of a regular shift-invariant diffusion $(X(t), Y(t))$ on $[0, R)$ has the following representation (possibly after a change of the probability space):
\formula[eq:xt]{
 X(t) & = X(0) + W(A(t)) + B(t),
}
where $W(t)$ is the Brownian motion in $\R$ independent of $Y(t)$, $A(t)$ is a positive continuous additive functional of $Y(t)$, and $B(t)$ is a (signed) continuous additive functional of $Y(t)$ which is a local martingale. Furthermore, by Corollary~\ref{cor:paf}, with probability $1$, the functional $A(t)$ is given in terms of the local time $L_y(t)$ of $Y(t)$ at $y$ as
\formula[eq:at]{
 A(t) & = \int_{[0, R)} L_y(t) a(dy)
}
for some locally finite non-negative measure $a(dy)$ on $[0, R)$. On the other hand, as discussed in the previous paragraph, Corollary~\ref{cor:af} implies that the functional $B(t)$ is the Itô integral
\formula[eq:bt]{
 B(t) & = \int_0^t b(Y(s)) d\dot{Y}(s)
}
for a locally square-integrable function $b(y)$ on $[0, R)$. The notation introduced above: $L_y(t)$, $\dot{Y}(t)$, $A(t)$, $B(t)$, $W(t)$, $a(dy)$ and $b(y)$, is kept until the end of this section.

We remark that the three simplifying procedures described above: change of scale, change of time and shearing, are probabilistic analogues of similar reduction steps in Section~1.1 in~\cite{k:hx}: scaling, normalisation and shearing.

\subsection{Symmetric diffusions}

In this section we focus on a special case, when we assume that the regular shift-invariant diffusion $(X(t), Y(t))$ is additionally invariant under reflections with respect to the vertical axis: the law of $(-X(t), Y(t))$ (under $\pr^{(x,y)}$) is the same as the law of $(X(t), Y(t))$ (under $\pr^{(-x,y)}$).

In this case we necessarily have $b(y) = 0$ for almost all $y \in [0, R)$ in formula~\eqref{eq:xt}, and consequently $X(t) = W(A(t))$. It follows that
\formula{
 Z(u) & = X(L_0^{-1}(u)) = W(A(L_0^{-1}(u))) , && \text{with} & A(t) & = \int_{[0, R)} L_y(t) a(dy) .
}
As explained in Section~\ref{sec:one}, Krein proved that $A(L_0^{-1}(u))$ is a subordinator with completely monotone jumps, and every such subordinator can be realised as $A(L_0^{-1}(u))$ for a unique Krein's string $a(dy)$. Therefore, $Z(u)$ is a subordinate Brownian motion, corresponding to a subordinator with completely monotone jumps. By Proposition~\ref{prop:sbm}, this class of processes is precisely the class of symmetric Lévy processes with completely monotone jumps.

The above paragraph proves Theorem~\ref{thm:main} under the additional symmetry assumption. The analytical counterpart of this result is discussed in detail in~\cite{km}. In the next section, we study the general case, and our proof is based on an extension of Krein's result proved by Eckhardt and Kostenko in~\cite{ek}, or, more precisely, on its variant discussed in~\cite{k:hx}.

\subsection{General diffusions}

In the general case, our strategy is to link the regular shift-invariant diffusion $(X(t), Y(t))$ with the harmonic extension problem
\formula[eq:pde]{
 & \begin{cases}
 \op u(x, y) = 0 & \text{for $(x, y) \in \R \times (0, R)$,} \\
 u(x, 0) = f(x) & \text{for $x \in \R$,}
 \end{cases}
}
with an additional Dirichlet condition $u(x, R) = 0$ if $R \in (0, \infty)$. Here $f$ is a given function on the boundary, $u$ is assumed to be appropriately regular, and $L$ is the generator of $(X(t), Y(t))$. Formally, we have
\formula[eq:op]{
 \op & = \tfrac{1}{2} \bigl(a(dy) + (b(y))^2\bigr) \partial_{xx} + b(y) \partial_{xy} + \tfrac{1}{2} \partial_{yy} .
}
We will prove that the infinitesimal generator of $Z(t)$ is the Dirichlet-to-Neumann operator associated to the problem~\eqref{eq:pde}, that is, the operator
\formula[eq:dn]{
 \dn f(x) & = \partial_y u(x, 0) = \lim_{y \to 0^+} \frac{u(x, y) - u(x, 0)}{y} \, .
}

Identification of the generator of the boundary trace of a Markov process with the appropriate Dirichlet-to-Neumann operator is a part of folklore, and it has been proved rigorously in a number of contexts; for references and discussion, see Introduction. However, apparently none of the known results covers the present case. The reasons for this situation are two-fold. First, under rather general assumptions on the coefficients $a(dy)$ and $b(y)$ it is already problematic to give a rigorous meaning to~\eqref{eq:op}. This is carefully carried out in~\cite{k:hx} (in the setting of square integrable functions), but the details are quite technical, and we will not discuss them here. Second, it is not straightforward to identify the operator $\op$ given by~\eqref{eq:op} with the infinitesimal generator of $(X(t), Y(t))$, and the corresponding Dirichlet-to-Neumann operator $\dn$ with the infinitesimal generator of $Z(t)$.

Therefore, we will take a different approach, involving Fourier transform with respect to the variable $x$. This will lead to the one-dimensional Brownian motion $Y(t)$ with a multiplicative functional $\hat{X}(t)$. Using the Itô--Tanaka formula (Corollary~\ref{cor:it}), we will link the pair $(\hat{X}(t), Y(t))$ with an ordinary differential equation, rather than a partial differential equation~\eqref{eq:pde} associated to $(X(t), Y(t))$.

Before we proceed, let us state a variant of the main result of~\cite{ek}, taken almost verbatim from~\cite{k:hx}. We introduce two minor modifications: first, we write $a(dy) + (b(y))^2 dy$ for the measure denoted by $a(dy)$ in~\cite{k:hx}, and so we simply assume that $a(dy)$ is non-negative rather than $a(dy) \ge (b(y))^2 dy$; second, we use the more intuitive expression $-\ph'(0^+) + a(\{0\}) \xi^2$ instead of $-\ph'(0)$ used in~\cite{k:hx} (see clarifications in Section~1.4 therein). Recall that a Rogers function is essentially the characteristic exponent of a Lévy process with completely monotone jumps (restricted to $(0, \infty)$, and then extended to a holomorphic function in the right complex half-plane).

\begin{theorem}[Theorem~2.1 in~\cite{k:hx}]
\label{thm:ode}
\begin{enumerate}[label={\textnormal{(\alph*)}}]
\item\label{thm:ode:a}
Suppose that $R \in (0, \infty]$, $a(dy)$ is a locally finite non-negative measure on $[0, R)$, and $b(y)$ is a locally square-integrable function on $[0, R)$. For every $\xi > 0$ there is a unique function $\ph$ with the following properties: $\ph$ is locally absolutely continuous on $[0, R)$; $\ph'$ is equal almost everywhere in $[0, R)$ to a function with locally bounded variation on $[0, R)$, so that the distributional derivative $\ph''$ corresponds to a locally finite measure in $[0, R)$; $\ph(0) = 1$, $\ph$ is bounded on $[0, R)$, and if $R \in (0, \infty)$, then additionally $\ph(R^-) = 0$; finally, we have
\formula[eq:ode]{
 \tfrac{1}{2} \ph''(dy) & = \tfrac{1}{2} \xi^2 \ph(y) a(dy) + \tfrac{1}{2} \xi^2 (b(y))^2 dy - i \xi \ph'(y) b(y) dy
}
in $(0, R)$, in the sense of distributions. Additionally, $|\ph(y)|^2$ is decreasing and convex on $[0, R)$, $|\ph'(y)|$ is decreasing on $[0, R)$, and $|\ph(y) b(y)|^2 + |\ph'(y)|^2$ is integrable over $[0, R)$.
\item\label{thm:ode:b}
The function $\psi(\xi)$, defined on $(0, \infty)$ by the formula
\formula{
 \psi(\xi) & = -\tfrac{1}{2} \ph'(0^+) + \tfrac{1}{2} a(\{0\}) \xi^2 ,
}
extends to a Rogers function.
\item\label{thm:ode:c}
Every Rogers function $\psi$ can be represented as above in a unique way.
\end{enumerate}
\end{theorem}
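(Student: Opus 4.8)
The plan is to eliminate the first-order (drift) term from~\eqref{eq:ode} by a gauge transformation, and thereby reduce the entire statement to the spectral theory of generalised indefinite strings of Eckhardt and Kostenko~\cite{ek}, which supplies a bijection between such strings and Herglotz--Nevanlinna functions. Writing $B(y) = \int_0^y b(s)\,ds$, I would substitute $\ph(y) = e^{-i\xi B(y)} g(y)$. Since $\abs{e^{-i\xi B}} = 1$, this preserves the normalisation $\ph(0) = g(0) = 1$, the boundedness of $\ph$, and the Dirichlet condition $\ph(R^-) = 0$. A direct computation, using $\ph' = e^{-i\xi B}(g' - i\xi b g)$, shows that the terms containing $b\,g'$ cancel and that those containing $\xi^2 b^2 g$ cancel as well; what remains is
\[
 g''(dy) = \xi^2 g(y)\, a(dy) + i\xi\, g(y)\, b'(dy) ,
\]
in the sense of distributions, where $b'$ is the distributional derivative of $b$. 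Setting $z = i\xi$, this is exactly the canonical spectral equation $-g'' = z\,\omega\, g + z^2\upsilon\, g$ of a generalised indefinite string, with non-negative measure $\upsilon = a$ and real distribution $\omega = -b'$ whose normalised antiderivative $-b$ lies in $L^2_{\loc}([0,R))$ --- precisely the regularity class treated in~\cite{ek}. When $b = 0$ this collapses to Krein's equation~\eqref{eq:krein}.

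With this dictionary in hand, part~\ref{thm:ode:a} is the existence and uniqueness of the Weyl solution. For $\re z \neq 0$, i.e. for real $\xi \neq 0$ after the rotation $z = i\xi$, the limit-point/limit-circle analysis of~\cite{ek} yields a unique solution lying in the relevant Weyl space and normalised by $g(0) = 1$, together with the Dirichlet condition at $R$ when $R < \infty$; transporting it back through the gauge factor produces the function $\ph$. The qualitative assertions of~\ref{thm:ode:a} --- that $\abs{\ph(y)}^2$ is decreasing and convex, $\abs{\ph'(y)}$ is decreasing, and $\abs{\ph(y) b(y)}^2 + \abs{\ph'(y)}^2$ is integrable --- I would derive directly from~\eqref{eq:ode}: pairing the equation with $\overline{\ph}$ and separating real and imaginary parts gives Wronskian-type identities from which the monotonicity of the energy follows, while the integrability is the statement that $\ph$ has finite energy, i.e. belongs to the Weyl space.

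For part~\ref{thm:ode:b}, the point is that $\psi(\xi) = -\tfrac12 \ph'(0^+) + \tfrac12 a(\{0\})\xi^2$ is, up to the rotation $z = i\xi$, the Weyl--Titchmarsh $m$-function of the string $(\upsilon, \omega) = (a, -b')$. Indeed $\ph'(0^+) = g'(0^+) - i\xi\, b(0^+)$, and the logarithmic quasi-derivative $-g^{[1]}(0^+)/g(0^+)$ is the $m$-function of~\cite{ek} at $z = i\xi$, while the atom $a(\{0\})$ accounts for a point mass of $\upsilon$ at the left endpoint and the value $b(0^+)$ contributes the imaginary-linear (drift) term. Eckhardt and Kostenko prove that $m$ is a Herglotz--Nevanlinna function in the upper half-plane $\im z > 0$; undoing the rotation turns this into the statement that $\psi(\xi)/\xi$ is a Nevanlinna--Pick function in the right half-plane $\re\xi > 0$, which is exactly condition~(d) of Proposition~\ref{prop:rogers}, i.e. $\psi$ is a Rogers function. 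Part~\ref{thm:ode:c} is then the converse direction of the same correspondence: given a Rogers function $\psi$, condition~(d) of Proposition~\ref{prop:rogers} makes $\psi(\xi)/\xi$ Nevanlinna--Pick in the right half-plane, rotating back by $z = i\xi$ gives a Herglotz function, and the injectivity and surjectivity of the Eckhardt--Kostenko map identify it with the $m$-function of a unique string $(\upsilon, \omega)$; setting $a = \upsilon$ and recovering $b$ as minus the normalised antiderivative of $\omega$ (with its additive constant fixed by the drift part of $\psi$) yields the required $(a, b)$, uniquely.

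The main obstacle, and the part requiring genuine care, is the rigorous bookkeeping of the gauge transformation when $b$ is merely locally square-integrable, so that $b'$ is a bona fide distribution and $\ph''$ only a locally finite measure. One must check that the transformed object $g$ carries exactly the quasi-derivative ($g^{[1]} = g' + b g$ in the present normalisation) demanded by~\cite{ek}, that the boundary and endpoint data survive the substitution, and --- most delicately --- that the value $\ph'(0^+)$ (together with the atom $a(\{0\})$) is matched to the Eckhardt--Kostenko $m$-function in the correct normalisation, since the precise constant here is what makes the two linear terms $\alpha\xi^2$ and $\beta\xi$ of the Lévy--Khintchine form come out right. Once this boundary identification is pinned down, everything else is a translation between the Herglotz picture of~\cite{ek} and the Rogers-function picture of Proposition~\ref{prop:rogers}.
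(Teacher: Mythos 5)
This theorem is not proved in the present paper at all --- it is imported verbatim from \cite{k:hx} --- and your reduction is essentially the proof given there: the shearing/gauge substitution $\ph(y) = e^{-i\xi B(y)} g(y)$ (the Fourier-side version of the ``shearing'' step of \cite{k:hx}) turns \eqref{eq:ode} into the Eckhardt--Kostenko string equation $-g'' = z \omega g + z^2 \upsilon g$ with $z = i\xi$, $\upsilon = a$, $\omega = -b'$, and then parts \ref{thm:ode:a}--\ref{thm:ode:c} are precisely the Weyl-solution theory and the bijection of \cite{ek}, transported through the rotation between Herglotz--Nevanlinna functions in the upper half-plane and Rogers functions in the right half-plane (condition (d) of Proposition~\ref{prop:rogers}), with the quasi-derivative $g^{[1]} = g' - i\xi b g$ matching $\ph'(0^+)$ at the boundary. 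One slip to correct: for real $\xi > 0$ the rotation $z = i\xi$ produces a \emph{purely imaginary} $z$, so the Weyl/limit-point analysis of \cite{ek} applies because $\im z \neq 0$ (the right half-plane $\re \xi > 0$ maps to the upper half-plane $\im z > 0$), not because ``$\re z \neq 0$'' as you wrote.
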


Recall that $(X(t), Y(t))$ is a function of two independent Brownian motions, $W(t)$ (on $\R$) and $Y(t)$ (on $[0, R)$). Let $\ex_W$ denote the integration with respect to the law of $W(t)$ alone, with $X(0) = 0$; equivalently, $\ex_W$ is the conditional expectation, with the condition given by the path of $Y(t)$. Similarly, we denote by $\ex_Y^y$ the expectation for the sole process $Y(t)$, with $Y(0) = 0$; $\ex_Y^y$ will only be applied to expressions that no longer depend on $W(t)$.

\begin{proof}[Proof of Theorem~\ref{thm:main}]
Suppose that $(X(t), Y(t))$ is a regular shift-invariant diffusion in $\R \times [0, R)$, where $R \in (0, \infty]$. We use the notation $L_y(t)$, $\dot{Y}(t)$, $A(t)$, $B(t)$, $W(t)$, $a(dy)$ and $b(y)$ introduced in Section~\ref{sec:two:red}. The trace left by $(X(t), Y(t))$ on the boundary is denoted by $Z(u)$; that is, $Z(u) = X(L_0^{-1}(u))$. In particular, by~\eqref{eq:xt},
\formula[eq:xt:zt]{
\begin{aligned}
 X(t) & = X(0) + W(A(t)) + B(t) , \\
 Z(u) & = Z(0) + W(A(L_0^{-1}(u))) + B(L_0^{-1}(u)) ,
\end{aligned}
}
where $t < T_R$ and $u < L_0(T_R)$, and where $A(t)$ and $B(t)$ are given by~\eqref{eq:at} and~\eqref{eq:bt}:
\formula[eq:at:bt]{
 A(t) & = \int_{[0, R)} L_y(t) a(dy) , & B(t) & = \int_0^t b(Y(s)) d\dot{Y}(s) .
}
The corresponding quadratic variation processes are given by:
\formula{
 \qv{A}(t) & = 0 , & \qv{B}(t) & = \int_0^t (b(Y(s)))^2 ds .
}
We fix $\xi > 0$, and we define
\formula[eq:hx:hz]{
 \hat{X}(t) & = \ind_{\{t < T_R\}} \ex_W e^{i \xi X(t)} , & \qquad \hat{Z}(u) & = \ind_{\{u < L_0(T_R)\}} \ex_W e^{i \xi Z(u)} .
}
Clearly,
\formula[eq:ehz]{
 \ex_Y^0 \hat{Z}(u) & = \ex^{(0, 0)} (\ind_{\{u < L_0(T_R)\}} e^{i \xi Z(u)}) = e^{-u \Psi(\xi)} ,
}
where $\Psi(\xi)$ is the characteristic (Lévy--Khintchine) exponent of $Z(u)$. Our goal is to prove that $Z(u)$ is a Lévy process with completely monotone jumps, or, equivalently, that $\Psi(\xi)$ extends to a Rogers function. Let us therefore study $\hat{X}(t)$ and $\hat{Z}(u) = \hat{X}(L_0^{-1}(u))$ in more detail.

All the following equalities are understood to hold with probability $1$ when $t < T_R$. By~\eqref{eq:xt:zt} and our assumption that $X(0) = 0$, we have
\formula{
 \hat{X}(t) & = e^{i \xi B(t)} \ex_W e^{i \xi W(A(t))} = e^{i \xi B(t) - \xi^2 A(t) / 2} ,
}
with $A(t)$ and $B(t)$ given by~\eqref{eq:at:bt}. Note that $|\hat{X}(t)| \le 1$. By Itô's lemma,
\formula[eq:xh]{
\begin{aligned}
 \hat{X}(t) - \hat{X}(0) = i \xi \int_0^t \hat{X}(s) dB(s) & - \frac{\xi^2}{2} \int_0^t \hat{X}(s) dA(s) - \frac{\xi^2}{2} \int_0^t \hat{X}(s) d \langle B \rangle (s) \\
 = i \xi \int_0^t \hat{X}(s) b(Y(s)) dY(s) & - \frac{\xi^2}{2} \int_0^t \hat{X}(s) dA(s) - \frac{\xi^2}{2} \int_0^t \hat{X}(s) (b(Y(s))^2 ds .
\end{aligned}
}
Let $\ph(y)$ be the function described in Theorem~\ref{thm:ode}\ref{thm:ode:a}, let $\psi(\xi) = -\phi'(0) + \tfrac{1}{2} a(\{0\}) \xi^2$ as in Theorem~\ref{thm:ode}\ref{thm:ode:b}, and let $\Phi(t) = \ph(Y(t))$. By the Itô--Tanaka formula (Corollary~\ref{cor:it}), we have
\formula{
 \Phi(t) - \Phi(0) & = \int_0^t \ph'(Y(s)) dY(s) + \frac{1}{2} \int_{(0, R)} L_y(t) \ph''(dy) .
}
Since $\ph$ is a solution of~\eqref{eq:ode}, we find that
\formula{
 \Phi(t) - \Phi(0) & = \int_0^t \ph'(Y(s)) dY(s) + \frac{\xi^2}{2} \int_{(0, R)} L_y(t) \ph(y) a(dy) \\
 & \qquad + \frac{\xi^2}{2} \int_{(0, R)} L_y(t) \ph(y) (b(y))^2 dy - i \xi \int_{(0, R)} L_y(t) \ph'(y) b(y) dy
}
Let $\dot{Y}(t) = Y(t) - \tfrac{1}{2} L_0(t)$ be the martingale part in the canonical decomposition of the semi-martingale $Y(t)$. Clearly,
\formula{
 \int_0^t \ph'(Y(s)) dY(s) & = \int_0^t \ph'(Y(s)) d\dot{Y}(s) + \tfrac{1}{2} \ph'(0^+) L_0(t) .
}
Since $\tfrac{1}{2} \ph'(0^+) = \tfrac{1}{2} \xi^2 a(\{0\}) - \psi(\xi)$, we find that
\formula{
 \Phi(t) - \Phi(0) & = -\psi(\xi) L_0(t) + \int_0^t \ph'(Y(s)) d\dot{Y}(s) + \frac{\xi^2}{2} \int_{[0, R)} L_y(t) \ph(y) a(dy) \\
 & \qquad + \frac{\xi^2}{2} \int_{(0, R)} L_y(t) \ph(y) (b(y))^2 dy - i \xi \int_{(0, R)} L_y(t) \ph'(y) b(y) dy
}
By the occupation time formula (Corollary~\ref{cor:paf}) and the definition~\eqref{eq:at:bt} of $A(t)$,
\formula{
\begin{gathered}
 \int_{[0, R)} L_y(t) \ph(y) a(dy) = \int_0^t \ph(Y(s)) dA(s) , \\
 \int_{(0, R)} L_y(t) \ph(y) (b(y))^2 dy = \int_0^t \ph(Y(s)) (b(Y(s)))^2 ds , \\
 \int_{(0, R)} L_y(t) \ph'(y) b(y) dy = \int_0^t \ph'(Y(s)) b(Y(s)) ds .
\end{gathered}
}
It follows that
\formula[eq:phi]{
\begin{aligned}
 \Phi(t) - \Phi(0) & = -\psi(\xi) L_0(t) + \int_0^t \ph'(Y(s)) d\dot{Y}(s) + \frac{\xi^2}{2} \int_0^t \ph(Y(s)) dA(s) \\
 & \qquad + \frac{\xi^2}{2} \int_0^t \ph(Y(s)) (b(Y(s)))^2 ds + \frac{i \xi}{2} \int_0^t \ph'(Y(s)) b(Y(s)) ds .
\end{aligned}
}
Finally, integration by parts formula for the Itô integral implies that
\formula[eq:parts]{
 \hat{X}(t) \Phi(t) - \hat{X}(0) \Phi(0) & = \int_0^t \hat{X}(s) d\Phi(s) + \int_0^t \Phi(s) d\hat{X}(s) + \qv{\hat{X}, \Phi}(t) ,
}
where $\qv{\hat{X}, \Phi}(t)$ is the quadratic co-variation process. We now combine the above expressions in order to get a formula for $\hat{X}(t) \Phi(t)$.

The martingale parts of $\hat{X}(t)$ and $\Phi(t)$ are given by
\formula{
 & i \xi \int_0^t \hat{X}(s) b(Y(s)) d\dot{Y}(s) && \text{and} & \int_0^t \ph'(Y(s)) d\dot{Y}(s) ,
}
respectively (see~\eqref{eq:xh} and~\eqref{eq:phi}). Furthermore, $\dot{Y}(t)$ is the Brownian motion in~$\R$. Thus, the last term in the right-hand side of~\eqref{eq:parts} is equal to
\formula{
 \qv{\hat{X}, \Phi}(t) & = i \xi \int_0^t \hat{X}(s) \ph'(Y(s)) b(Y(s)) ds .
}
The other two terms are
\formula{
 \int_0^t \hat{X}(s) d\Phi(s) & = - \psi(\xi) \int_0^t \hat{X}(s) dL_0(s) + \int_0^t \hat{X}(s) \ph'(Y(s)) d\dot{Y}(s) \\
 & \qquad + \frac{\xi^2}{2} \, \int_0^t \hat{X}(s) \ph(Y(s))(s) dA(s) + \frac{\xi^2}{2} \int_0^t \hat{X}(s) \ph(Y(s)) (b(Y(s)))^2 ds \\
 & \qquad\qquad - i \xi \int_0^t \hat{X}(s) \ph'(Y(s)) b(Y(s)) ds
}
(by~\eqref{eq:phi}), and
\formula{
 \int_0^t \Phi(s) d\hat{X}(s) & = i \xi \int_0^t \hat{X}(s) \ph(Y(s)) b(Y(s)) d\dot{Y}(s) \\
 & \qquad - \frac{\xi^2}{2} \int_0^t \hat{X}(s) \ph(Y(s)) dA(s) - \frac{\xi^2}{2} \int_0^t \hat{X}(s) \ph(Y(s)) (b(Y(s))^2 ds
}
(by~\eqref{eq:xh}). By adding the sides of the above identities and using~\eqref{eq:parts}, we conclude that
\formula[eq:mart:1]{
\begin{aligned}
 \hat{X}(t) \Phi(t) - \hat{X}(0) \Phi(0) & = - \psi(\xi) \int_0^t \hat{X}(s) dL_0(s) \\
 & \hspace*{-2em} + \int_0^t \hat{X}(s) \ph'(Y(s)) d\dot{Y}(s) + i \xi \int_0^t \hat{X}(s) \ph(Y(s)) b(Y(s)) d\dot{Y}(s) 
\end{aligned}
}
when $t < T_R$.

Suppose for the moment that $R \in (0, \infty)$. By assumption, $\Phi(T_R^-) = \ph(R^-) = 0$. We claim that all three integrals in the right-hand side exist for $t = T_R$. Indeed: $|\hat{X}(s)| \le 1$ and $|\ph'(s)| \le -\ph'(0^+)$, and hence
\begin{gather}
 \notag \ex_Y^0 \int_0^{T_R} |\hat{X}(s)| dL_0(s) \le \ex_Y^0 L_0(T_R) < \infty , \\
 \label{eq:ests} \ex_Y^0 \int_0^{T_R} |\hat{X}(s) \ph'(Y(s))|^2 ds \le (\ph'(0^+))^2 \ex_Y^0 T_R < \infty , \\
 \notag \ex_Y^0 \int_0^{T_R} |\hat{X}(s) \Phi(s) b(Y(s))|^2 ds \le R \int_0^R |\ph(y) b(y)|^2 dy < \infty ;
\end{gather}
in the last estimate we used the occupation time formula (Corollary~\ref{cor:paf}), the expression for the mean value of the local time (Proposition~\ref{prop:otd}) and integrability of $|\ph(y) b(y)|^2$ (Theorem~\ref{thm:ode}\ref{thm:ode:a}). Thus, if we agree that $\hat{X}(T_R) \Phi(T_R) = 0$, then~\eqref{eq:mart:1} also holds for $t = T_R$.

Observe also that, by our convention, $X(0) = 0$, and so $\hat{X}(0) = 1$. Furthermore, $\Phi(0) = \ph(Y(0))$, so if $Y(0) = 0$, then $\Phi(0) = 1$. Combining this observation with the above remark, we find that, regardless of whether $R$ is finite or not, we can rewrite~\eqref{eq:mart:1} as
\formula[eq:mart:2]{
\begin{aligned}
 \hat{X}(t \wedge T_R) \Phi(t \wedge T_R) - 1 & = - \psi(\xi) \int_0^{t \wedge T_R} \hat{X}(s) dL_0(s) \\
 & \hspace*{-6em} + \int_0^{t \wedge T_R} \hat{X}(s) \ph'(Y(s)) d\dot{Y}(s) + i \xi \int_0^{t \wedge T_R} \hat{X}(s) \ph(Y(s)) b(Y(s)) d\dot{Y}(s) 
\end{aligned}
}
for every $t \ge 0$, provided that $Y(0) = 0$. Note that
\formula{
 \hat{X}(L_0^{-1}(u)) \Phi(L_0^{-1}(u)) & = \hat{Z}(u) \ph(0) = \hat{Z}(u)
}
if $L_0^{-1}(u) < T_R$, and
\formula{
 \hat{X}(T_R) \Phi(T_R) & = 0 = \hat{Z}(u)
}
if $L_0^{-1}(u) \ge T_R$ by our convention and the definition~\eqref{eq:hx:hz} of $Z(u)$. Therefore
\formula{
 \hat{X}(L_0^{-1}(u) \wedge T_R) \Phi(L_0^{-1}(u) \wedge T_R) & = \hat{Z}(u) .
}
Similarly,
\formula{
 \int_0^{L_0^{-1}(u) \wedge T_R} \hat{X}(s) dL_0(s) & = \int_0^{L_0^{-1}(u)} \ind_{\{s < T_R\}} \hat{X}(s) dL_0(s) = \int_0^u \hat{Z}(v) dv .
}
Thus, by setting $t = L_0^{-1}(u)$ in~\eqref{eq:mart:2}, we find that:
\formula{
 \hat{Z}(u) - 1 & = - \psi(\xi) \int_0^u \hat{Z}(v) dv \\
 & \quad + \int_0^{L_0^{-1}(u) \wedge T_R} \hat{X}(s) \ph'(Y(s)) d\dot{Y}(s) + i \xi \int_0^{L_0^{-1}(u) \wedge T_R} \hat{X}(s) \ph(Y(s)) b(Y(s)) d\dot{Y}(s)
}
for every $u \ge 0$. Taking expectations of both sides (and using estimates~\eqref{eq:ests} to find that the expectation of the two Itô integrals is zero), we end up at
\formula{
 \ex_Y^0 \hat{Z}(u) - 1 & = -\psi(\xi) \int_0^u \ex_Y^0 \hat{Z}(v) dv .
}
The solution of this integral equation is clearly given by
\formula{
 \ex_Y^0 \hat{Z}(u) & = \exp(-u \psi(\xi)) .
}
On the other hand, we have already seen in~\eqref{eq:ehz} that
\formula{
 \ex_Y^0 \hat{Z}(u) & = \exp(-u \Psi(\xi)) .
}
It follows that $\Psi(\xi) = \psi(\xi)$ for $\xi > 0$. By Theorem~\ref{thm:ode}\ref{thm:ode:b} we conclude that $\Psi(\xi)$ extends to a Rogers function, and consequently $\hat{Z}(u)$ is a Lévy process with completely monotone jumps. Conversely, by Theorem~\ref{thm:ode}\ref{thm:ode:c}, every Lévy process with completely monotone jumps can be realised as $\hat{Z}(u)$ in a unique way.
\end{proof}

We note that the random variable $L_0(T_R)$, the life-time of $Z(u)$, is exponentially distributed, with mean $R$; see formula~3.2.3.2 in~\cite{bs}. Therefore, $Z(u)$ is killed with intensity $\gamma = 1 / R$, as mentioned in the introduction.

\begin{proof}[Proof of Corollary~\ref{cor:loc}]
It suffices to observe that the process $Z(u)$ in~\eqref{eq:thm:loc} is given by the same expression as in~\eqref{eq:xt:zt}, with $Z(0) = 0$, and with $W(t)$ denoted by $X(t)$. Thus, the desired result follows from the proof of Theorem~\ref{thm:main}.
\end{proof}

\begin{proof}[Proof of Corollary~\ref{cor:exc}]
Let $Z(t)$ be a Lévy process with completely monotone jumps. We identify $Z(t)$ with the trace of a regular shift-invariant diffusion $(X(t), Y(t))$, as in Theorem~\ref{thm:main}, and we use the notation introduced in Sections~\ref{sec:two:red} and~\ref{sec:exc}. Recall that $L_0(T_R)$ is the life-time of $Z(t)$, and $\gamma = 1 / R$ is the killing rate of $Z(t)$. Furthermore, by $(\delta_u(t), \eps_u(t))$ we denote the excursions of $(X(t), Y(t))$ away from $\R \times \{0\}$:
\formula{
 \delta_u(t) & = X((\alpha_u + t) \wedge \beta_u) - X(\alpha_u), & \eps_u(t) & = Y((\alpha_u + t) \wedge \beta_u) ,
}
where $\alpha_u = L_0^{-1}(u^-)$ and $\beta_u = L_0^{-1}(u)$. The life-time of $(\delta(t), \eps(t))$ is denoted by $\zeta_u = \beta_u - \alpha_u$.

The jumps of the process $Z(t)$ are given by
\formula{
 \Delta Z(u) & = Z(u) - Z(u^-) = X(L^{-1}(u)) - X(L^{-1}(u^-)) = X(\beta_u) - X(\alpha_u) .
}
Therefore, by the Lévy--Itô theorem, the Lévy measure $\nu(dx)$ of the process $Z(u)$ is given by
\formula[eq:nu:sum]{
\begin{aligned}
 \biggl(\int_0^u (1 - e^{-\gamma v}) dv\biggr) \nu(A) & = \ex^{(0,0)} \sum_{v \in [0, u \wedge L_0(T_R))} \ind_A(\Delta Z(v)) \\
 & = \ex^{(0,0)} \sum_{v \in [0, u \wedge L_0(T_R))} \ind_A(X(\beta_v) - X(\alpha_v)) .
\end{aligned}
}

Thus, we can rewrite~\eqref{eq:nu:sum} as
\formula{
 \biggl(\int_0^u (1 - e^{-\gamma v}) dv\biggr) \nu(A) & = \ex^{(0,0)} \sum_{v \in [0, u \wedge L_0(T_R))} \ind_A(\delta_v(\zeta_v)) .
}
By the exit system formula~\eqref{eq:exit},
\formula{
 \biggl(\int_0^u (1 - e^{-\gamma v}) dv\biggr) \nu(A) & = \ex^{(0,0)} \int_0^{u \wedge L_0(T_R)} \exit\bigl(\ind_{E_R}(\eps) \ind_A(\delta(\zeta))\bigr) du \\
 & = \ex^{(0,0)} \bigl(u \wedge L_0(T_R)\bigr) \exit\bigl(\ind_{E_R}(\eps) \ind_A(\delta(\zeta))\bigr) \\
 & = \biggl(\int_0^u (1 - e^{-\gamma v}) dv\biggr) \exit(\eps \in E_R, \, \delta(\zeta) \in A) ,
}
where $\exit(d\delta, d\eps)$ is the corresponding exit system, and $E_R$ is the class of excursions $\eps(t)$ of $Y(t)$ which never reach $R$. Thus,
\formula{
 \nu(A) & = \exit(\eps \in E_R; \delta(\zeta) \in A) .
}
It remains to express the exit system $\exit$ of $(X(t), Y(t))$ in terms of the excursion measure $\exc(d\eps)$ of $Y(t)$.

By the representation~\eqref{eq:xt:zt},
\formula{
 X(t) & = W(A(t)) + B(t) ,
}
where $W(t)$ and $Y(t)$ are independent Wiener processes (on $\R$ and $[0, R)$, respectively), and by~\eqref{eq:at:bt},
\formula{
 A(t) & = \int_{[0, R)} L_y(t) a(dy) , & B(t) & = \int_0^t b(Y(s)) d\dot{Y}(s) .
}
Therefore, for $u < L_0(T_R)$,
\formula{
 \delta_u(t) & = X((\alpha_u + t) \wedge \beta_u) - X(\alpha_u) \\
 & = \bigl(W(A((\alpha_u + t) \wedge \beta_u)) - W(A(\alpha_u))\bigr) + \bigl(B((\alpha_u + t) \wedge \beta_u) - B(\alpha_u)\bigr) .
}
The intervals $(\alpha_u, \beta_u)$ are pairwise disjoint, and their distribution is independent from the Brownian motion $W(t)$. It follows that to each excursion $\eps_u(t)$ of the process $Y(t)$ we can associate the stopped Brownian motion
\formula{
 W_u(t) & = W((\alpha_u + t) \wedge \beta_u) - W(\alpha_u) ,
}
and, conditionally on the path of $Y(t)$, $W_u(t)$ are independent stopped Brownian motions in $\R$. As a consequence, we can represent the exit system $\exit(d\delta, d\eps)$ in a similar way. More precisely, suppose that $\eps(t)$ has law given by the excursion measure $\exc(d\eps)$ of $Y(t)$, $W(t)$ is the Brownian motion independent from $\eps(t)$, and
\formula{
 \delta(t) & = W(A_\eps(t \wedge \zeta)) + B_\eps(t \wedge \zeta) ,
}
where
\formula{
 A_\eps(t) & = \int_{[0, R)} L_{\eps,y}(t) a(dy) , & B_\eps(t) & = \int_0^t b(\eps(s)) d\eps(s) ,
}
and $L_{\eps,y}(t)$ is the local time of $\eps(t)$ at $y$. Then the joint law of $(\delta(t), \eps(t))$ (restricted to the event $\eps \in E$) is equal to $\exit(d\delta, d\eps)$ (restricted to the event $\eps \in E$).

We conclude that
\formula[eq:nu:exc]{
 \nu(A) & = \exit(\eps \in E, \delta(\zeta) \in A) = \eta\Bigl(E; \, \pr_W \bigl(W(A_\eps(\zeta)) + B_\eps(\zeta) \in A\bigr)\Bigr) ,
}
as desired.

We have thus proved that every Lévy measure can be represented as in~\eqref{eq:nu:exc}. Conversely, any parameters $R$, $a(dy)$ and $b(y)$ correspond to a unique Lêvy process with completely monotone jumps, and hence they determine some Lévy measure $\nu(dx)$. Finally, if we suppose that $R = \infty$ and $a(\{0\}) = 0$, then the corresponding Lévy process $Z(t)$ has infinite life-time and no Gaussian part. Consequently, the process $Z(t)$ is uniquely determined by its Lévy measure $\nu(dx)$ and the drift. The drift can be changed arbitrarily by adding a constant function to $b(y)$, and so the Lévy measure determines $a(dy)$ uniquely, and $b(y)$ up to addition by a constant.
\end{proof}

%
%

\section{Examples}
\label{sec:ex}

We conclude the article with a number of explicit pairs of Lévy processes with completely monotone jumps and the corresponding regular shift-invariant diffusions. These examples correspond to Section~4 in~\cite{k:hx}, where all technical details omitted here are given. We illustrate each example with a figure, which shows essential features of the corresponding diffusion, together with a sample path.

Assume for the moment that $a(dy)$ has a continuous density function $a(y)$, and $b(y)$ is a continuous function. Then $(X(t), Y(t))$ is a two-dimensional Itô diffusion, with diffusion coefficient matrix
\formula{
 \frac{1}{2} \begin{pmatrix} a(y) + (b(y))^2 & b(y) \\ b(y) & 1 \end{pmatrix} .
}
Let $\vec{v}_1 = (\sqrt{a(y)}, 0)$ and $\vec{v}_2 = (b(y), 1)$. Then the above matrix can be written as $\vec{v}_1 \otimes \vec{v}_1 + \vec{v}_2 \otimes \vec{v}_2$, and hence the diffusion $(X(t), Y(t))$ \emph{locally} behaves as $W_1(t) \vec{v}_1 + W_2(t) \vec{v}_2$, where $W_1(t)$ and $W_2(t)$ are two independent one-dimensional Brownian motions.

Suppose additionally that $b(y)$ is continuously differentiable, and write $B(y) = \int_0^y b(s) ds$. Consider the following curvilinear coordinates: $\tilde{x} = x + B(y)$, $\tilde{y} = y$. Since $\nabla \tilde{x} = (1, b(y))$ and $\nabla \tilde{y} = (0, 1)$, we have $\vec{v}_2 = \nabla \tilde{x}$ and $\vec{v_1} = \sqrt{a(y)} \nabla \tilde{y}$. Thus, in coordinates $\tilde{x}, \tilde{y}$, the diffusion coefficient matrix of $(X(t), Y(t))$ becomes diagonal:
\formula{
 \frac{1}{2} \begin{pmatrix} a(y) & 0 \\ 0 & 1 \end{pmatrix} .
}
On the other hand, in coordinates $(\tilde{x}, \tilde{y})$ the diffusion $(X(t), Y(t))$ is no longer a martingale: it gets a horizontal drift $\vec{w} = (b'(y), 0)$. For further details in the analytical setting, we refer to Section~1.3.1 in~\cite{k:hx}.

We can summarise the above two paragraphs as follows.
\begin{itemize}
\item The two vectors $\vec{v}_1 = (\sqrt{a(y)}, 0)$ and $\vec{v}_2 = (b(y), 1)$ indicate two independent \emph{diffusion directions} of $(X(t), Y(t))$. Length of these vectors correspond to the speed of the diffusion in the given direction.
\item If $a(y) = 0$, then $\vec{v}_1 = 0$, and the diffusion $(X(t), Y(t))$ is \emph{degenerate}. In this case it aligns to the \emph{profile lines} with constant $\tilde{x} = x + B(y)$. Indeed, these lines are integral curves of the vector field $\vec{v_1}$. The diffusion $(X(t), Y(t))$ moves between different profile lines with speed given by the \emph{apparent drift} $\vec{w} = (b'(y), 0)$ (which becomes a true drift coefficient of $(X(t), Y(t))$ in the curvilinear coordinates $\tilde{x} = x + B(y)$, $\tilde{y} = y$).
\item If $a(y) > 0$, then the motion of the diffusion $(X(t), Y(t))$ between different profile lines is a diffusion, with diffusion coefficient $a(y)$ and drift term $b'(y)$.
\item Further motion between profile lines is caused by oblique reflection at the boundary. More precisely, if the limit $b(0^+)$ is finite, then, \emph{locally} near the horizontal axis, $X(t)$ behaves as $b(0^+) \dot{Y}(t) = b(0^+) Y(t) - b(0^+) L_0(t)$. Thus, the diffusion $(X(t), Y(t))$ additionally moves between different profile lines according to the local time of $Y(t)$ at $0$, with relative speed given by $-b(0^+)$. Equivalently, $-b(0^+)$ is the drift coefficient of the trace $Z(u)$. Note that $-b(0^+)$ is also the tangent of the angle between profile lines and the horizontal axis.
\end{itemize}
In each of the figures in this section we show:
\begin{itemize}
\item a sample path of the diffusion $(X(t), Y(t))$ (black line);
\item a corresponding path of the boundary trace $Z(t)$ (blue dots);
\item diffusion directions (red two-sided arrows);
\item apparent drift (magenta arrows);
\item profile lines (grey lines).
\end{itemize}

\afterpage{
\begin{landscape}
\begin{figure}
\centering
\begin{tabular}{cc}
\includegraphics[width=11cm]{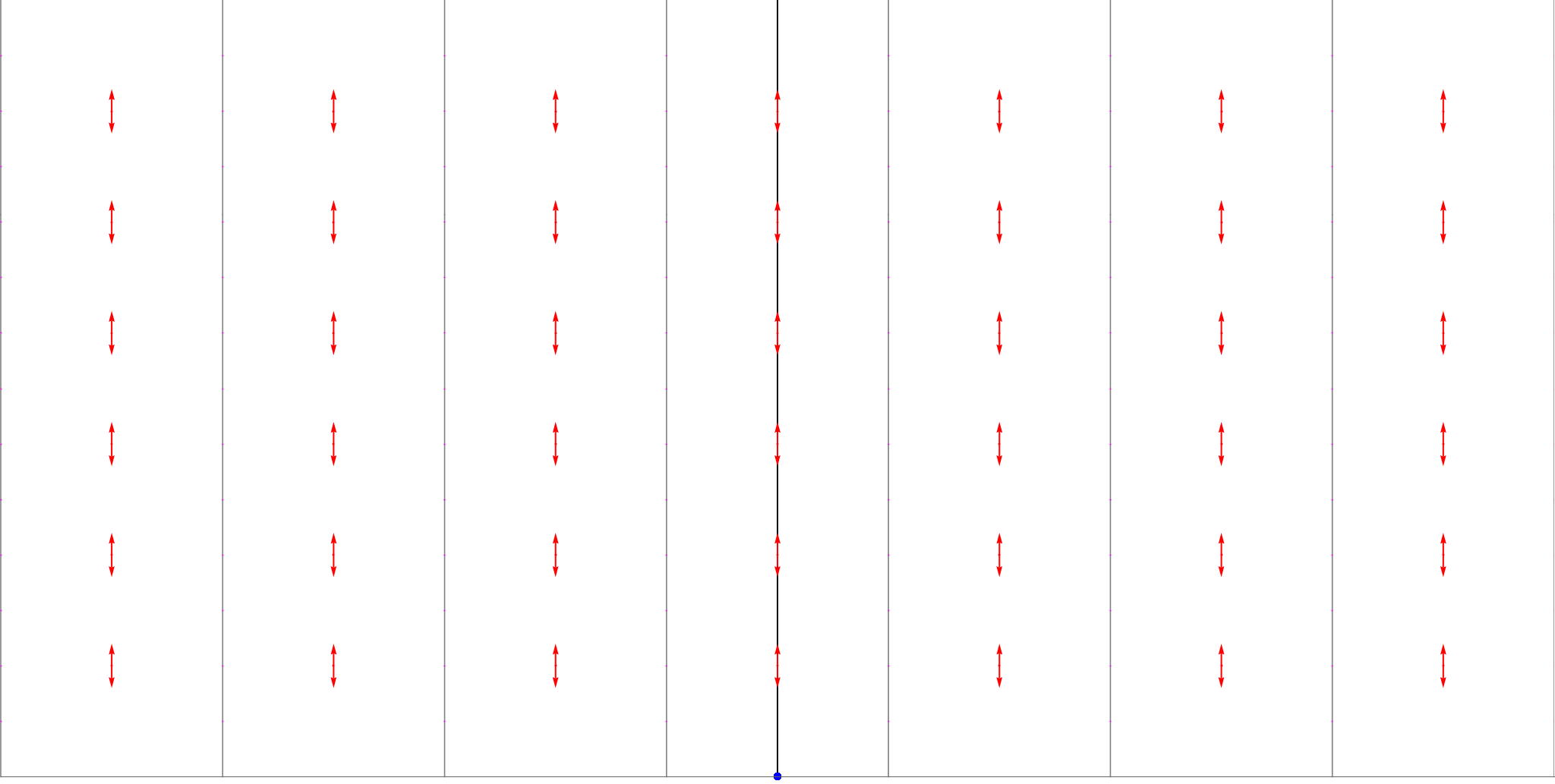}
&
\includegraphics[width=11cm]{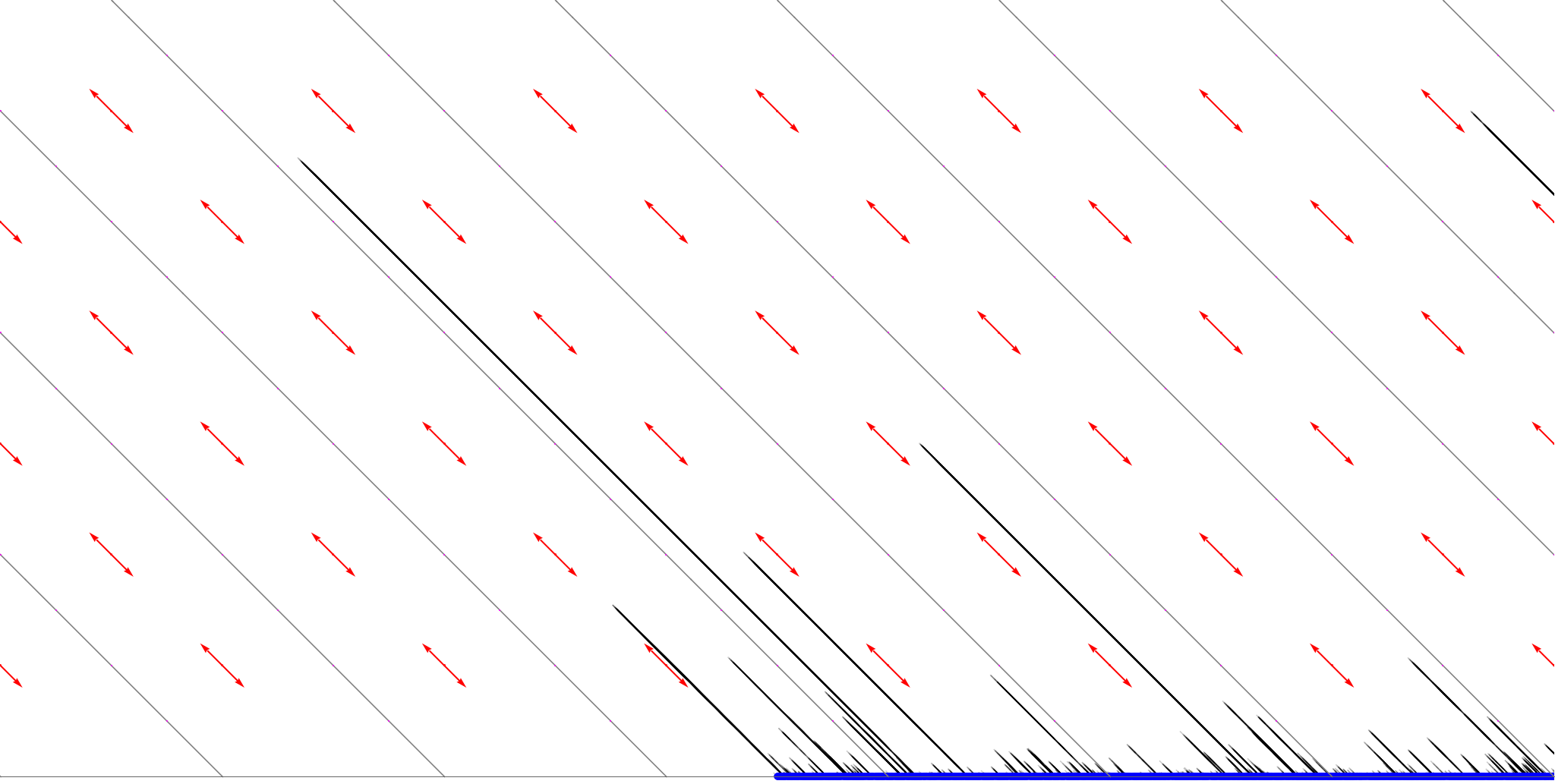}
\\
(a) & (b)
\end{tabular}
\caption{A degenerate diffusion, the trace of which is (a)~a constant process, (b)~a pure drift.}
\label{fig:trivial}
\vspace*{1cm}
\end{figure}

\begin{figure}
\centering
\begin{tabular}{cc}
\includegraphics[width=11cm]{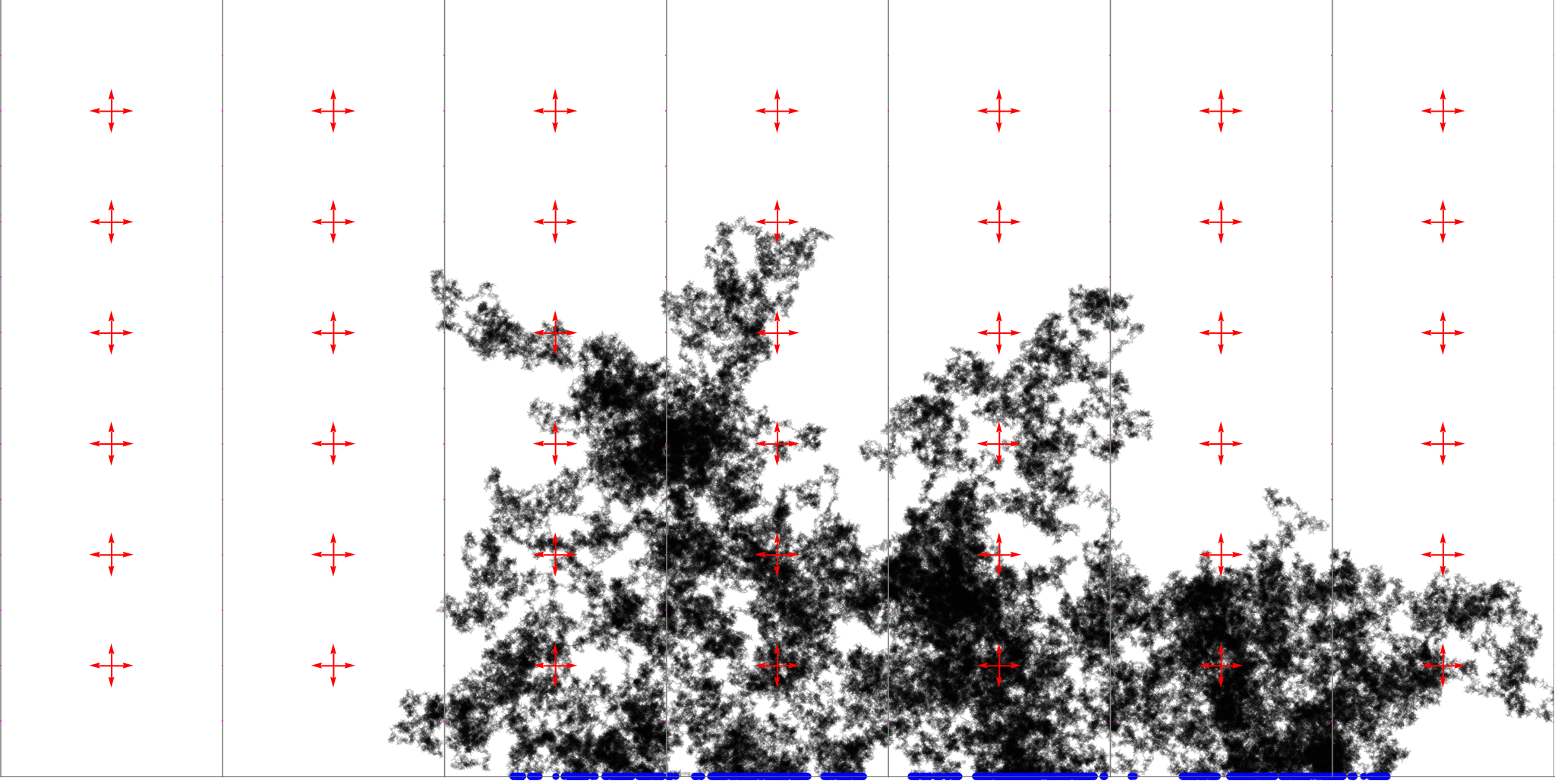}
&
\includegraphics[width=11cm]{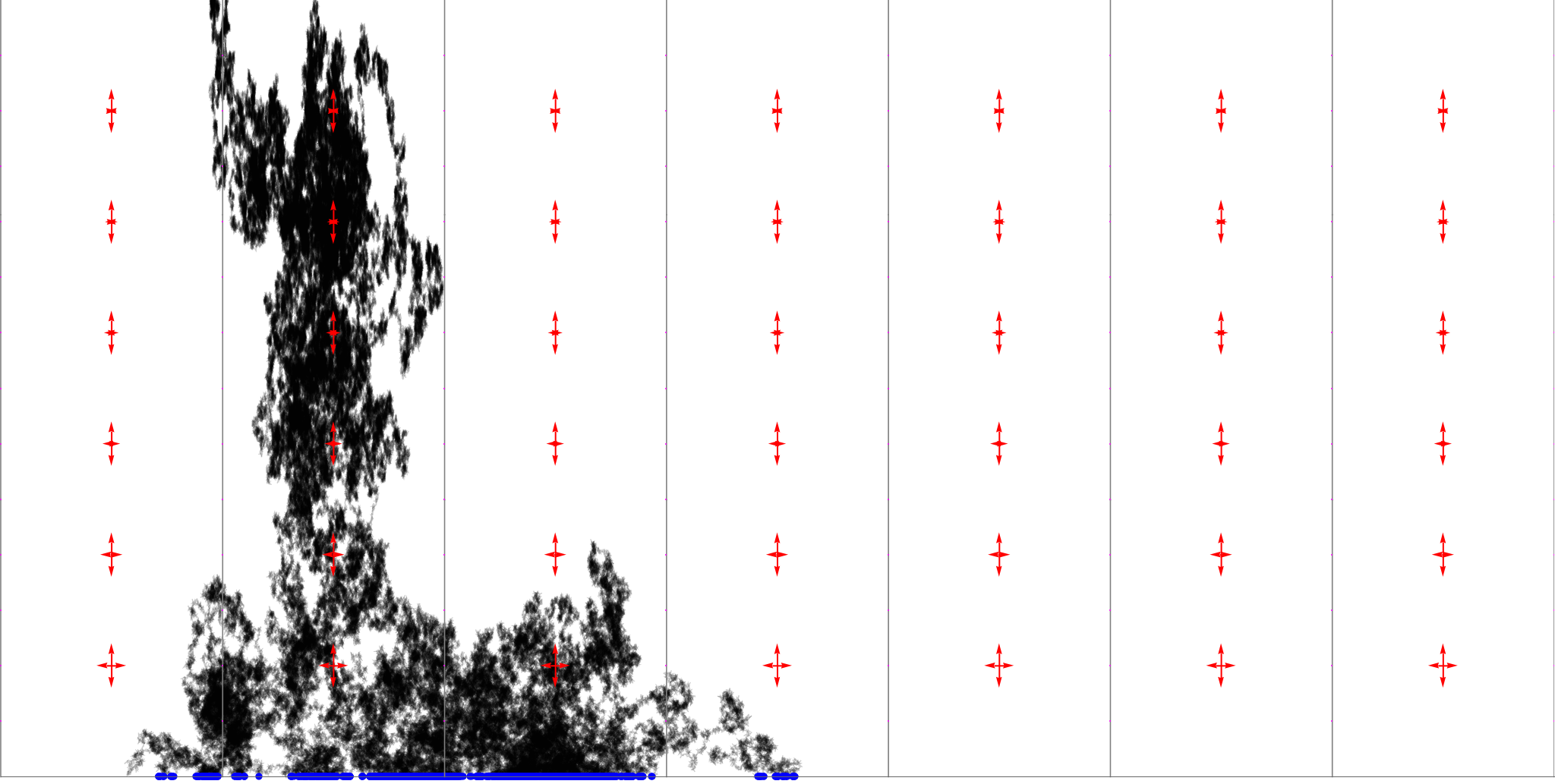}
\\
(a) & (b)
\end{tabular}
\caption{A diffusion, the trace of which is (a) the Cauchy process, (b) the relativistic Cauchy process.}
\label{fig:brown}
\end{figure}

\begin{figure}
\centering
\begin{tabular}{cc}
\includegraphics[width=11cm]{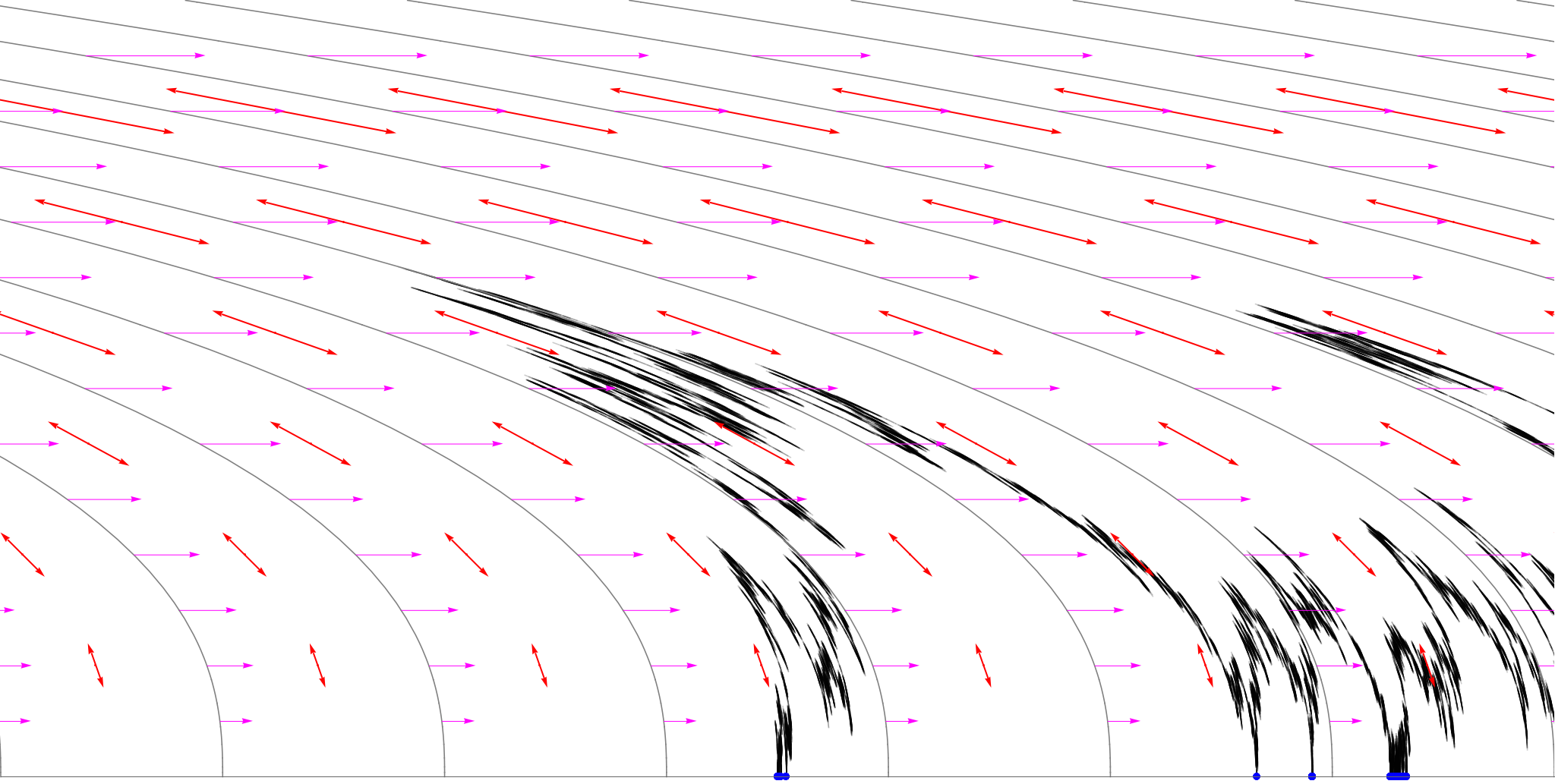}
&
\includegraphics[width=11cm]{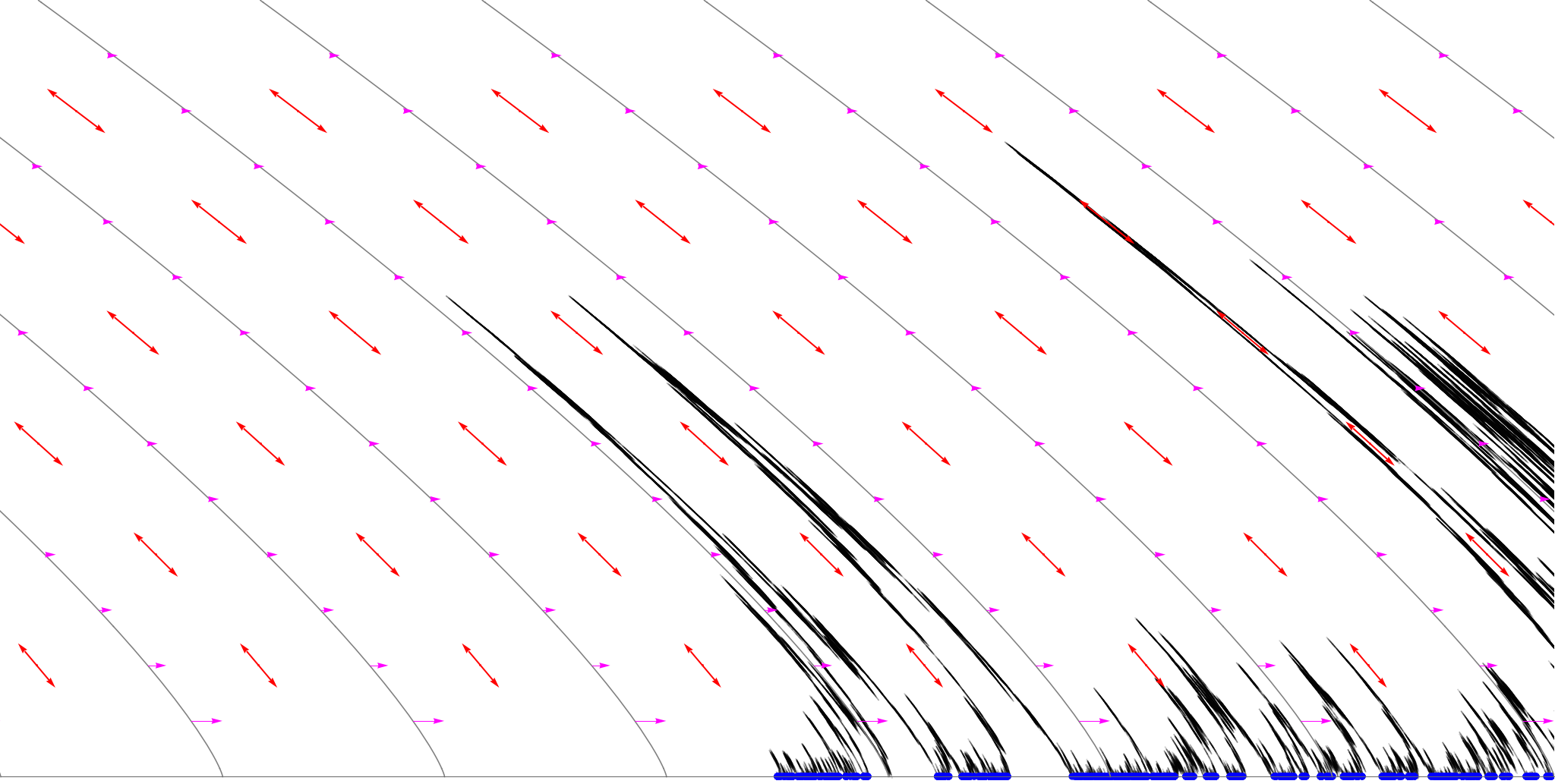}
\\
(a) & (b)
\end{tabular}
\caption{A degenerate diffusion, the trace of which is a stable subordinator with index (a)~$0.4$, (b)~$0.8$.}
\label{fig:sub}
\vspace*{1cm}
\end{figure}

\begin{figure}
\centering
\begin{tabular}{cc}
\includegraphics[width=11cm]{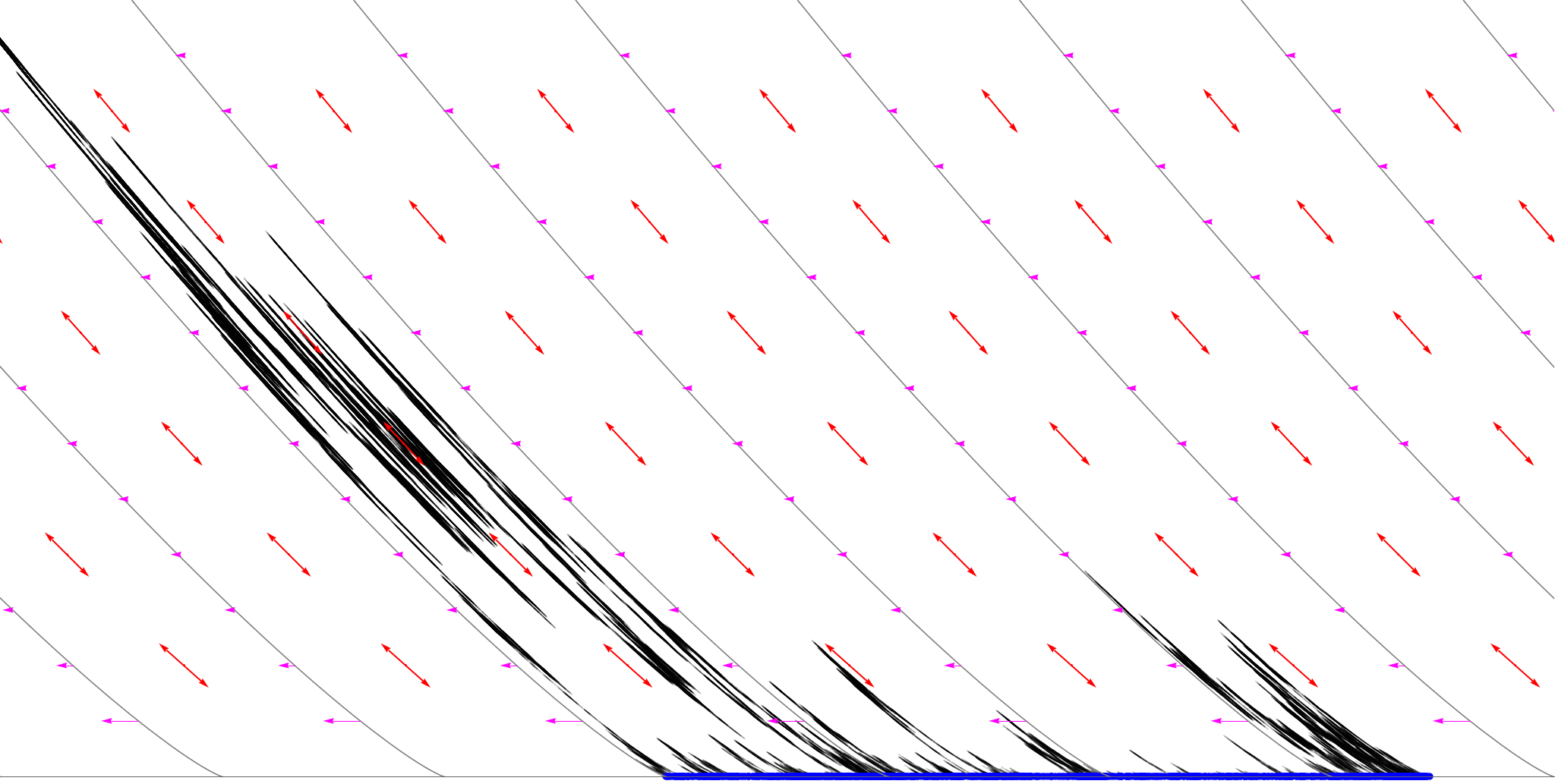}
&
\includegraphics[width=11cm]{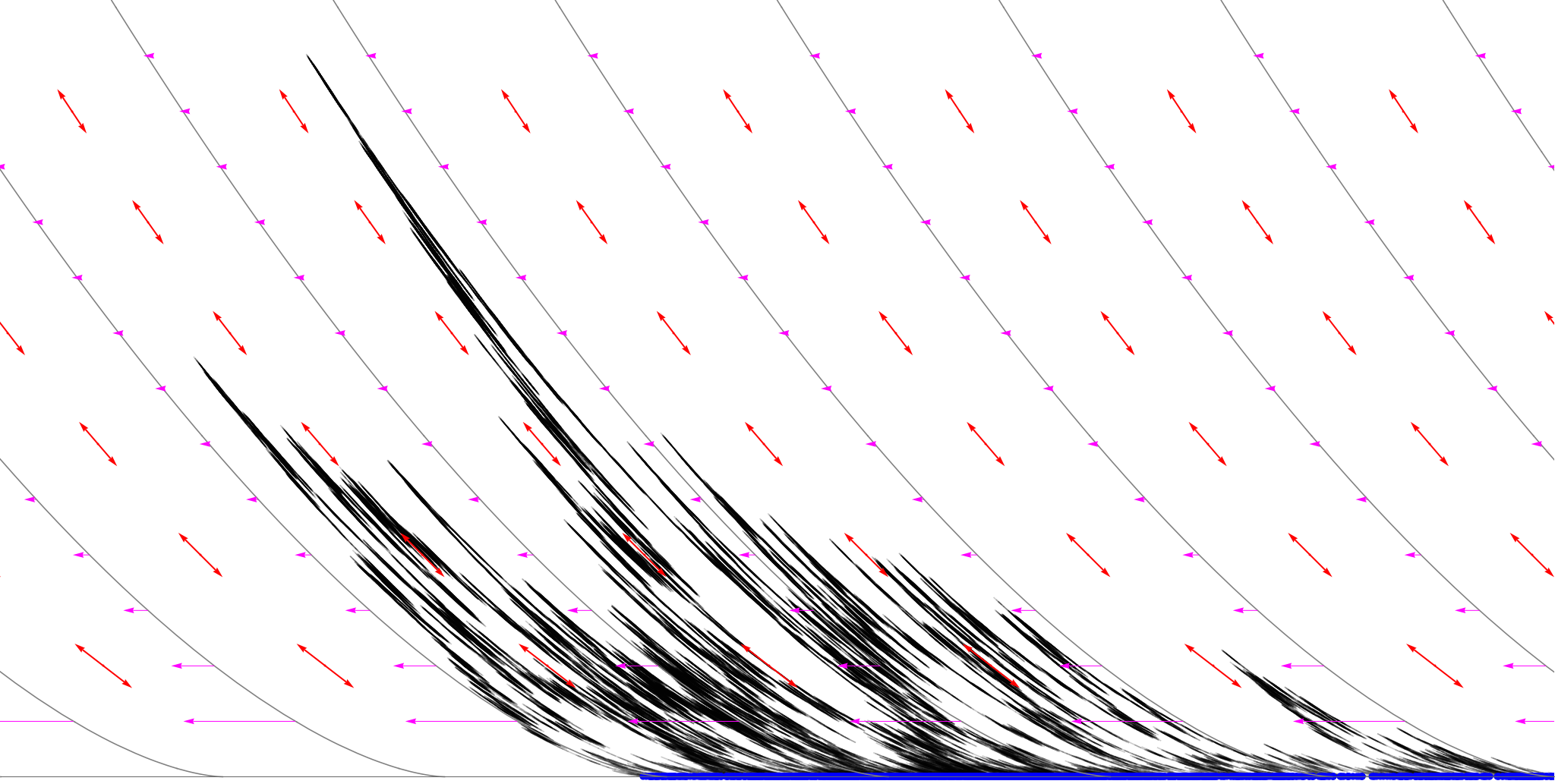}
\\
(a) & (b)
\end{tabular}
\caption{A degenerate diffusion, the trace of which is a one-sided stable process with index (a)~$1.2$, (b)~$1.6$.}
\label{fig:onesided}
\end{figure}

\begin{figure}
\centering
\begin{tabular}{cc}
\includegraphics[width=11cm]{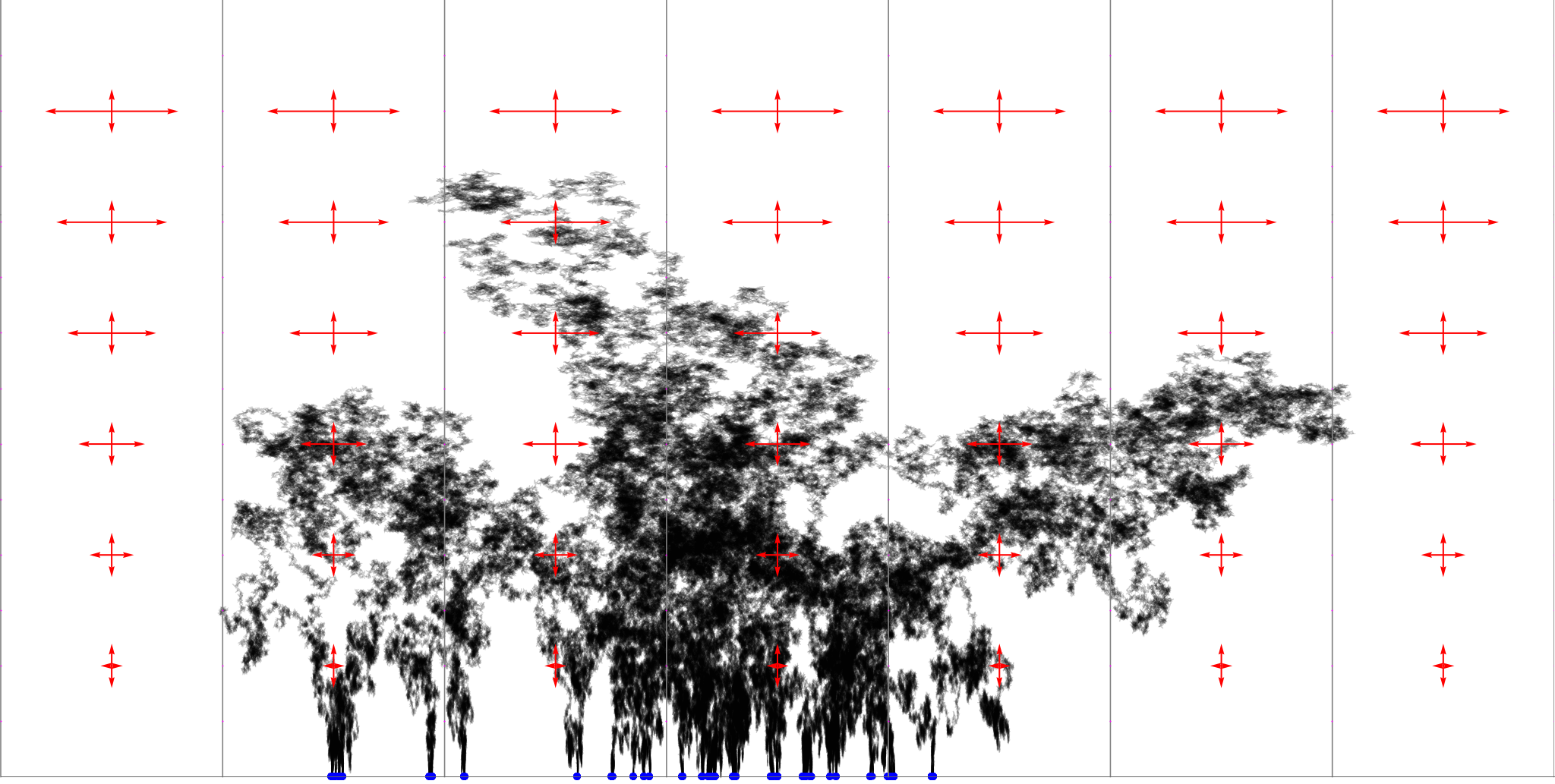}
&
\includegraphics[width=11cm]{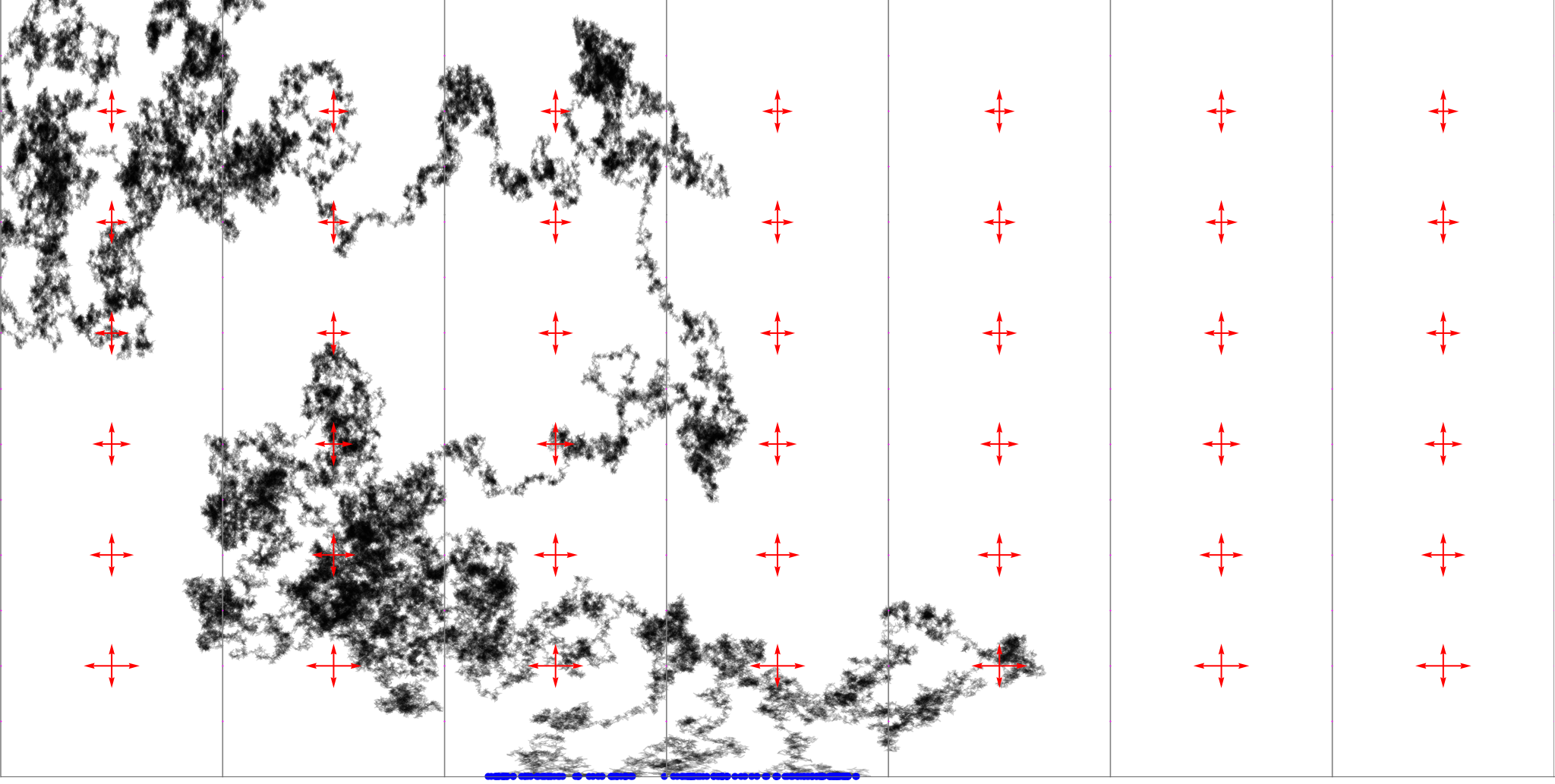}
\\
(a) & (b)
\end{tabular}
\caption{A diffusion, the trace of which is a symmetric stable process with index (a)~$0.5$, (b)~$1.5$.}
\label{fig:sym}
\vspace*{1cm}
\end{figure}

\begin{figure}
\centering
\begin{tabular}{cc}
\includegraphics[width=11cm]{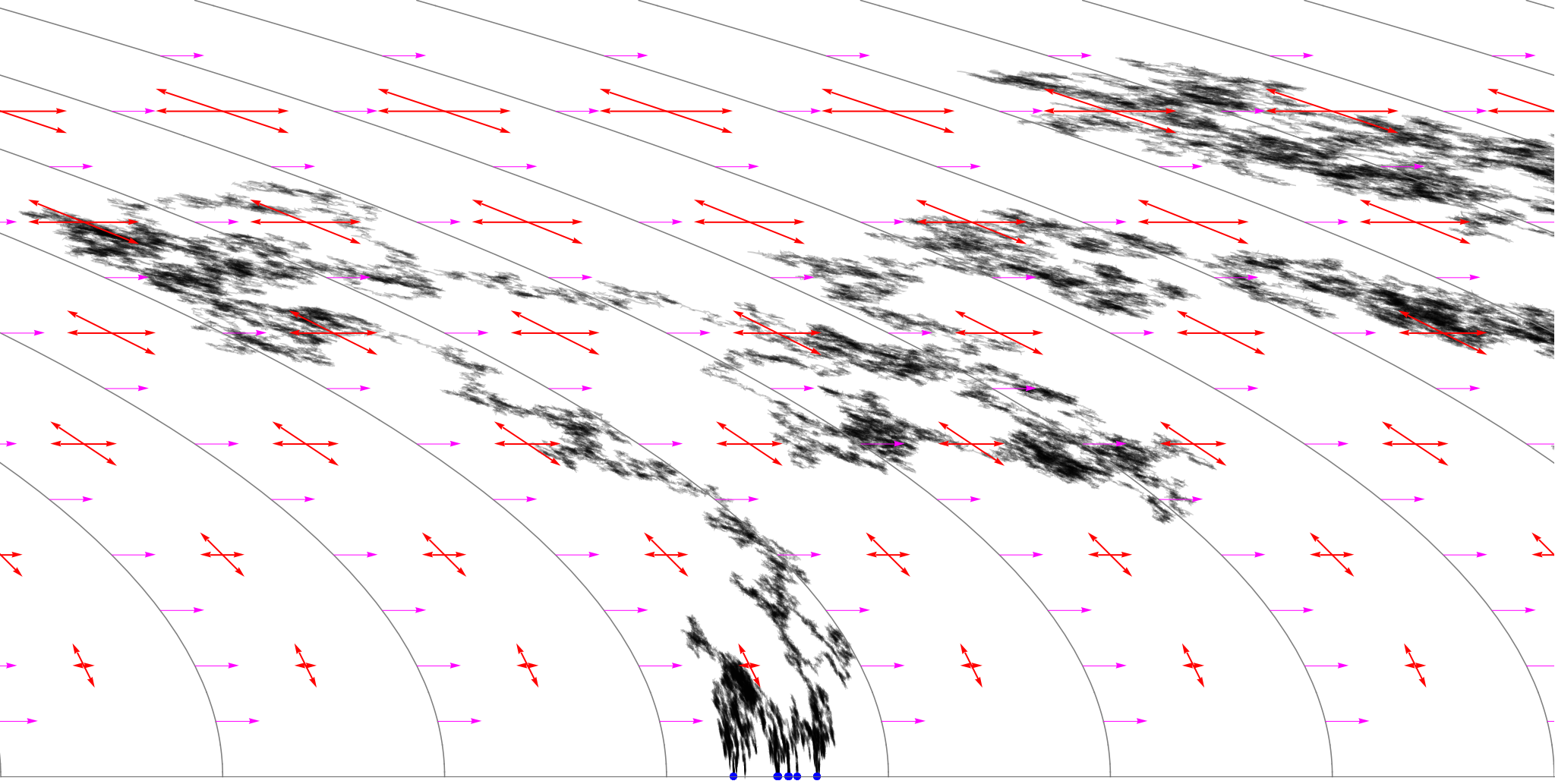}
&
\includegraphics[width=11cm]{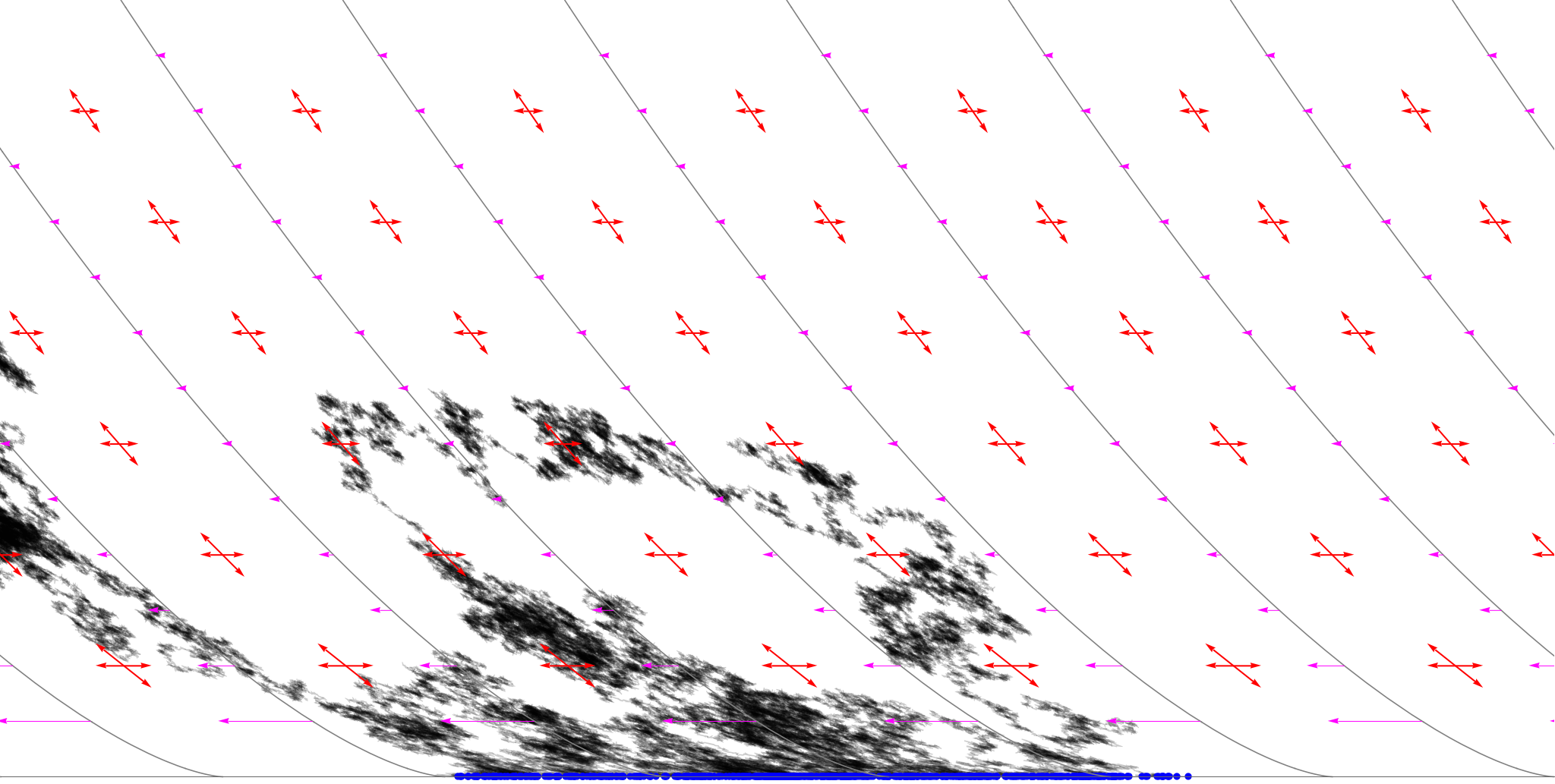}
\\
(a) & (b)
\end{tabular}
\caption{A diffusion, the trace of which is a two-sided non-symmetric stable process with index (a)~$0.5$, (b)~$1.5$.}
\label{fig:mix}
\end{figure}

\begin{figure}
\centering
\begin{tabular}{cc}
\includegraphics[width=11cm]{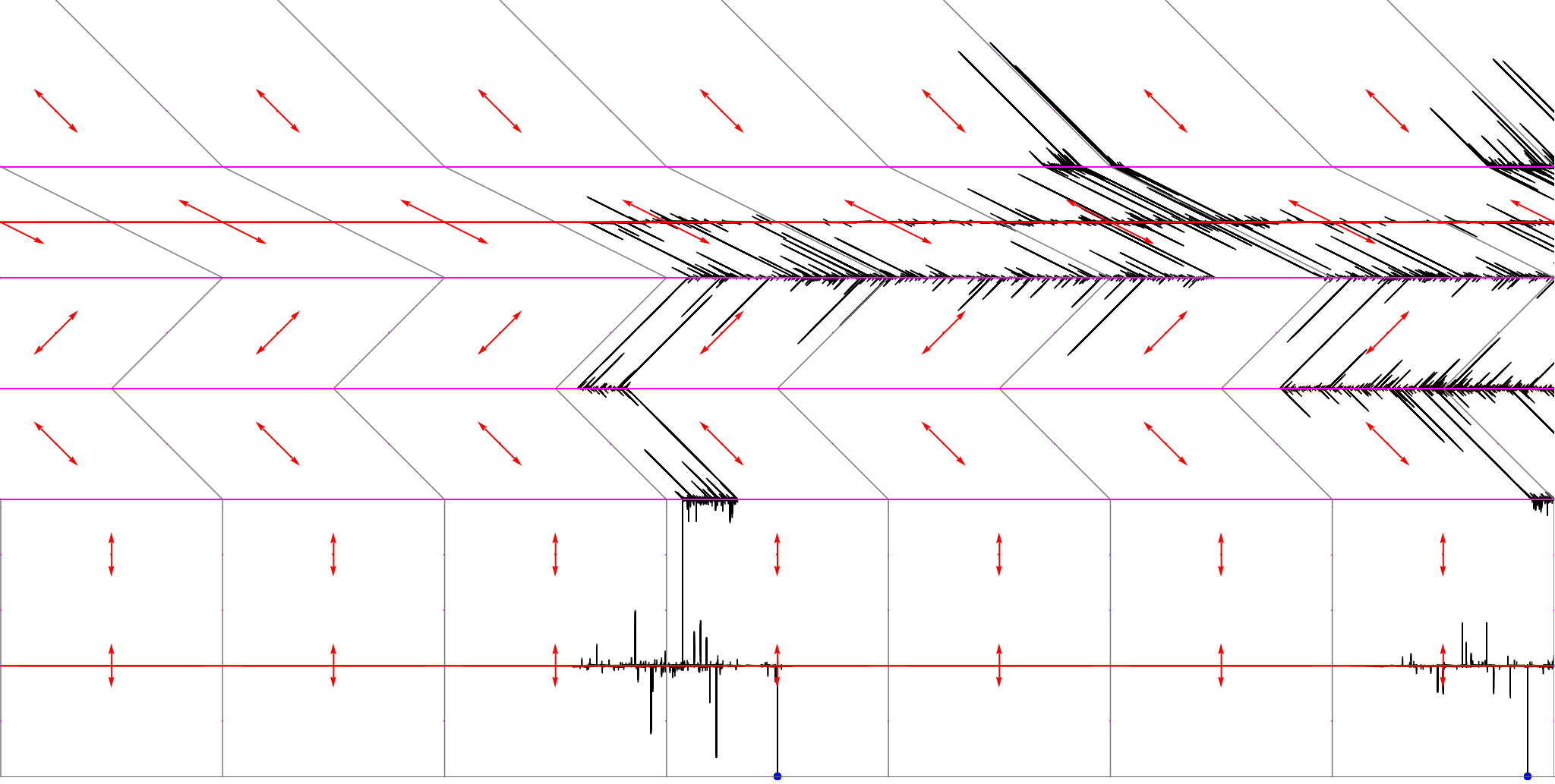}
&
\includegraphics[width=11cm]{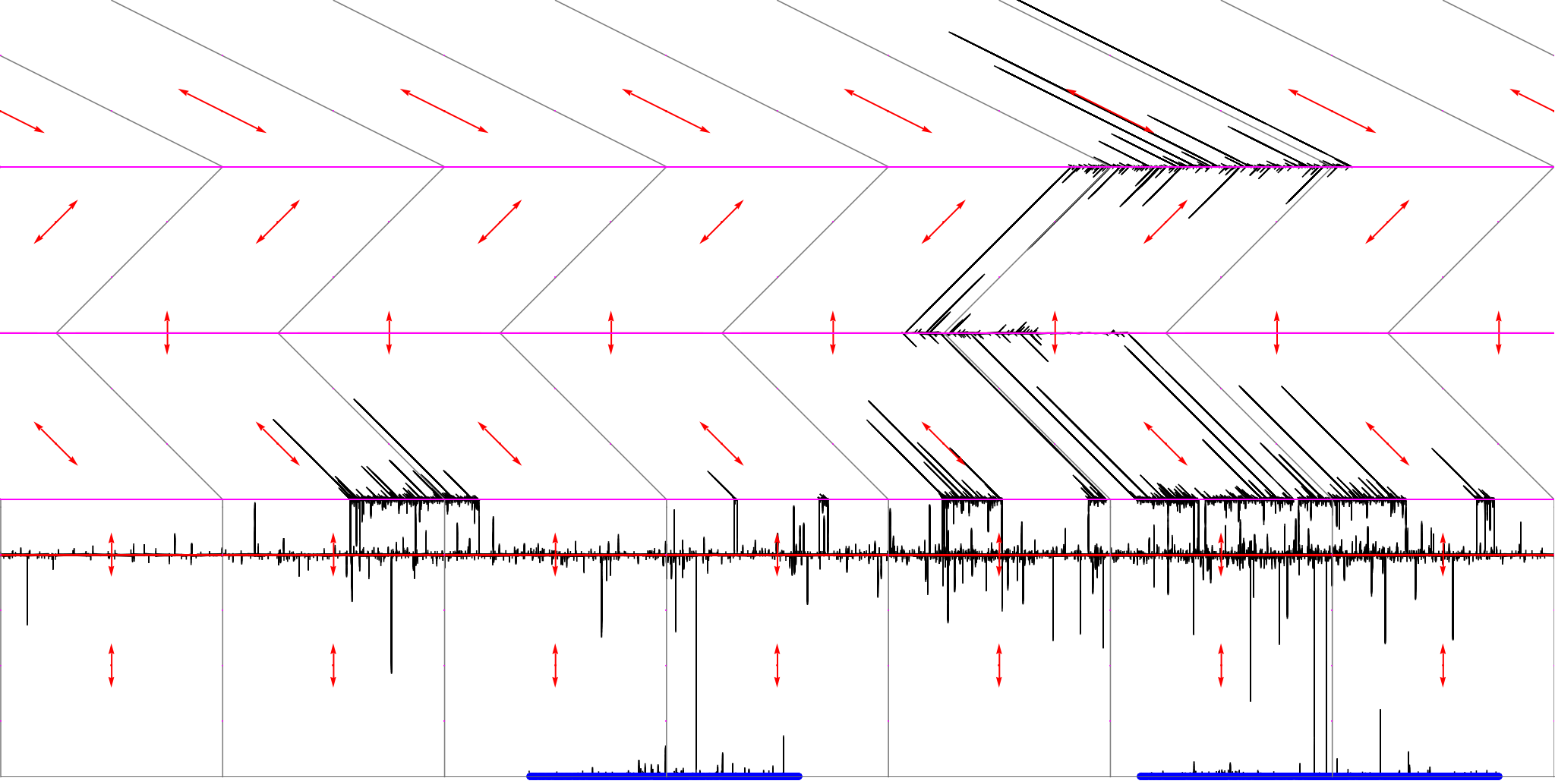}
\\
(a) & (b)
\end{tabular}
\caption{A degenerate diffusion, the trace of which is a meromorphic Lévy process (a)~without Brownian component, (b)~with Brownian component.}
\label{fig:phase}
\vspace*{1cm}
\end{figure}

\begin{figure}
\centering
\begin{tabular}{cc}
\includegraphics[width=11cm]{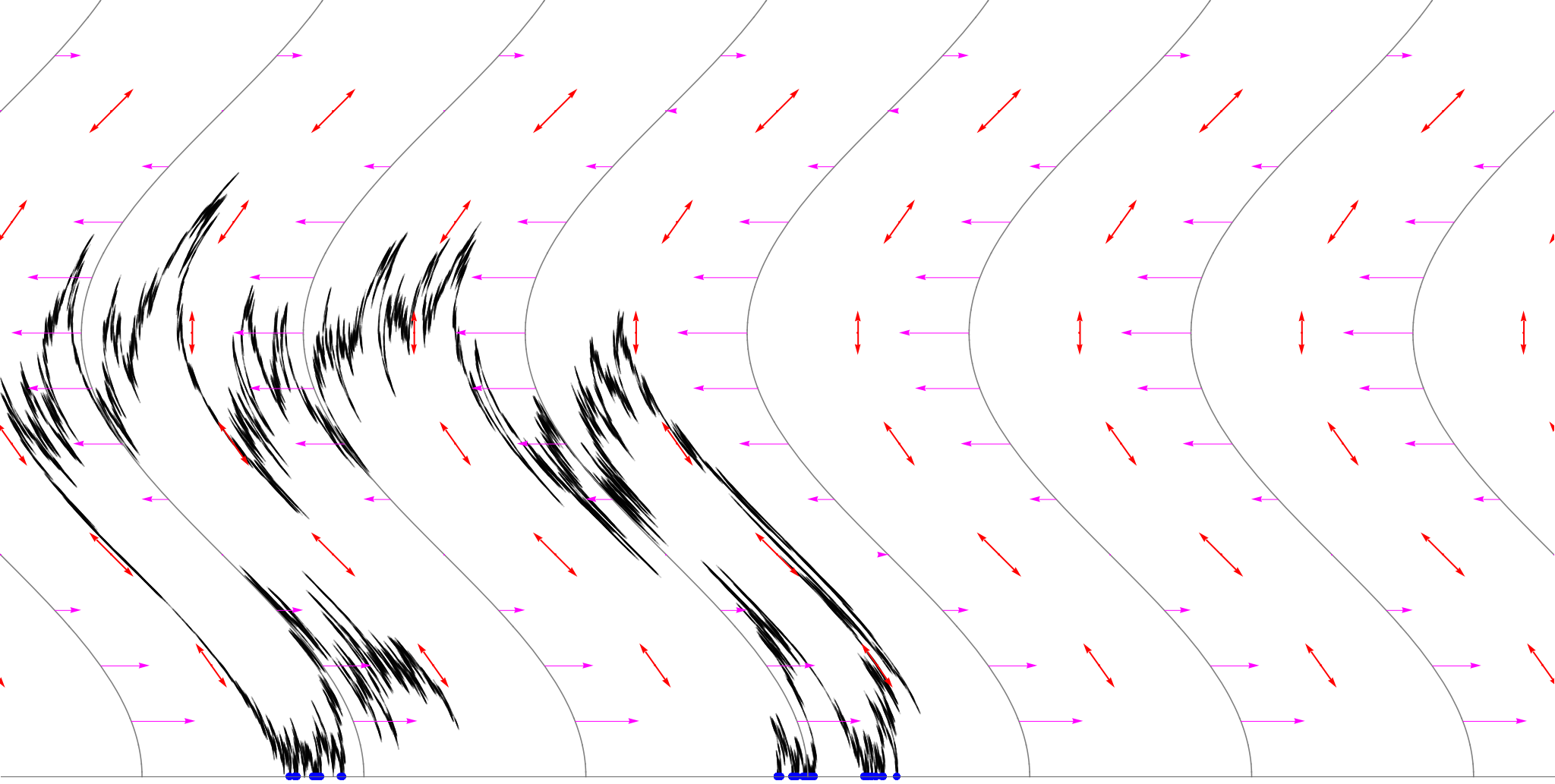}
&
\includegraphics[width=11cm]{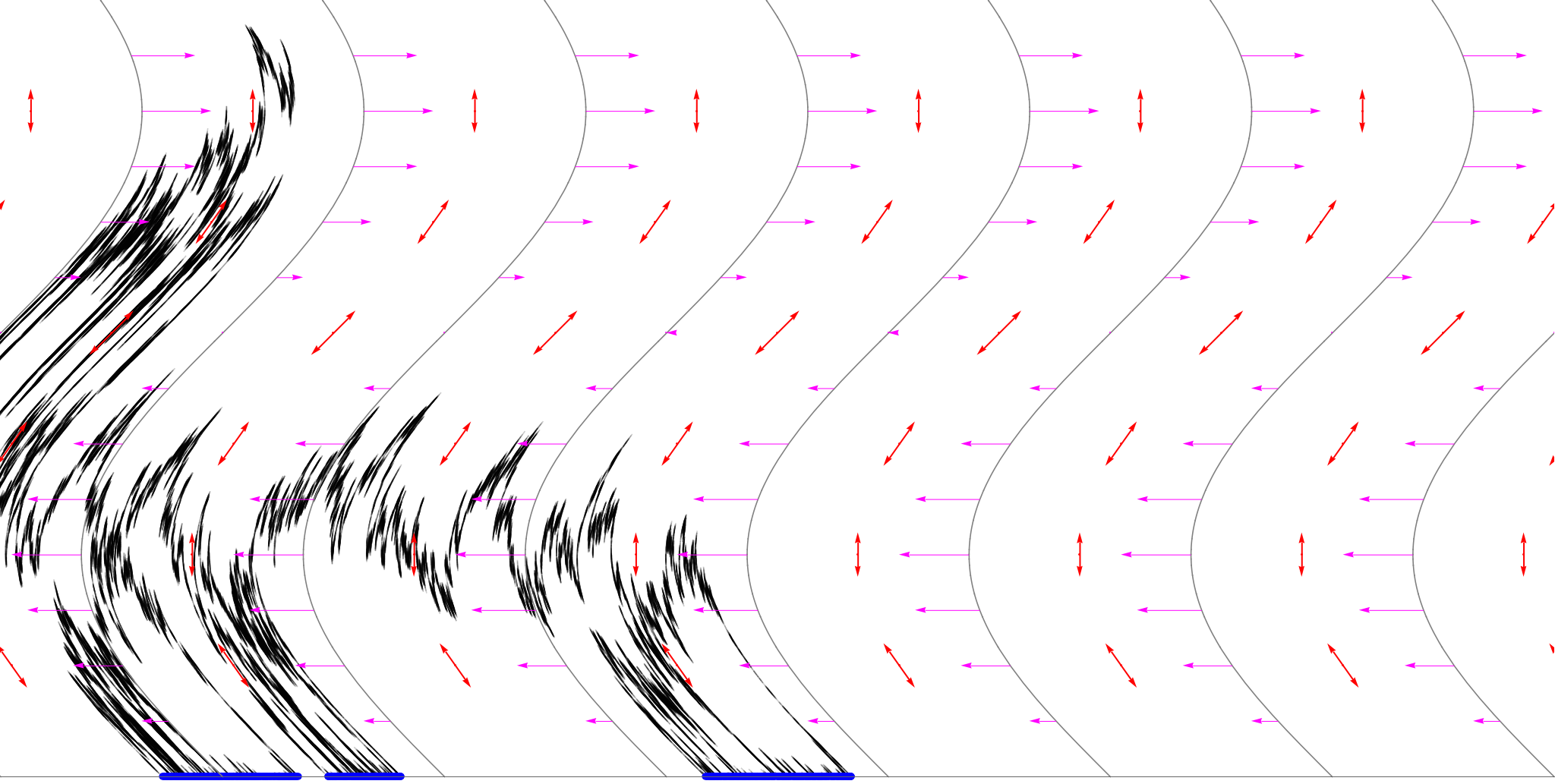}
\\
(a) & (b)
\end{tabular}
\caption{Two more degenerate diffusions, the trace of which has two-sided jumps.}
\label{fig:sin}
\end{figure}
\end{landscape}
}

\subsection{Trivial Lévy processes}

If $R = \infty$,
\formula{
 a(dy) & = 0 dy && \text{and} & b(y) & = 0 ,
}
then, of course, $X(t) = 0$ for all $t > 0$, and hence $Z(u) = 0$ for all $u > 0$. Similarly, if $R \in (0, \infty)$, and again $a(dy) = 0 dy$ and $b(y) = 0$, then $Z(u) = 0$ up to its finite life-time, which has an exponential distribution with mean $R$. In other words, $\gamma = 1 / R$ is the killing rate of $Z(u)$.

More generally, if $q \in \R$, $R = \infty$,
\formula{
 a(dy) & = 0 dy && \text{and} & b(y) & = -q ,
}
then $Z(u) = q u$ for all $u > 0$. Indeed: $X(t) = -q \dot{Y}(t) = -q Y(t) + q L_0(t)$, and hence
\formula{
 Z(u) & = X(L_0^{-1}(u)) = -q Y(L_0^{-1}(u)) + q u = q u .
}
Finally, with the above $a(dy) = 0 dy$ and $b(y) = -q$ in a finite interval $[0, R)$, $R \in (0, \infty)$, we have $Z(u) = q u$ up to its exponentially distributed life-time, with mean $R$.

These examples are completely degenerate: the diffusion $(X(t), Y(t))$ moves on a single profile line until it hits the boundary, and then it shifts to a parallel line according to the local time of $Y(t)$ at $0$, with velocity $q$; see Figure~\ref{fig:trivial}.

\subsection{Cauchy process and quasi-relativistic process}

If $R = \infty$,
\formula{
 a(dy) & = dy && \text{and} & b(y) & = 0 ,
}
then $(X(t), Y(t))$ is the usual two-dimensional Brownian motion in half-plane, and consequently $Z(u)$ is the Cauchy process; see Figure~\ref{fig:brown}(a). This has been first observed by Spitzer in~\cite{spitzer}, and it corresponds to the classical identification of the Dirichlet-to-Neumann map in a half-space with the square root of the Laplace operator.

If $m > 0$, $R = \infty$,
\formula{
 a(dy) & = \frac{1}{1 + 2 m y} \, dy && \text{and} & b(y) & = 0 ,
}
then $(X(t), Y(t))$ behaves in a similar way as the two-dimensional Brownian motion, but $X(t)$ runs at a lower pace: the larger the value of $Y(t)$, the slower the motion of $X(t)$; see Figure~\ref{fig:brown}(b). The corresponding trace process $Z(u)$ is sometimes called a (quasi-)relativistic Cauchy process, because its generator is the one-dimensional quasi-relativistic Hamiltonian of a free particle, $m - (m^2 - \partial_x^2)^{1/2}$; see~\cite{cms,kkm,ls} for more information.

\subsection{Stable Lévy processes}

Representation of symmetric stable Lévy processes as boundary traces of appropriate diffusions is due to Molchanov and Ostrovski, see~\cite{mo}. If $\alpha \in (0, 2)$, $R = \infty$,
\formula{
 a(dy) & = y^{2/\alpha - 2} dy && \text{and} & b(y) & = 0 ,
}
then $Z(u)$ is a symmetric $\alpha$-stable Lévy process. Note that for $\alpha < 1$, $X(t)$ runs faster near the boundary, while if $\alpha > 1$, $X(t)$ runs faster away from the boundary; see Figure~\ref{fig:sym}. For $\alpha = 1$, $X(t)$ is the Brownian motion, and $Z(u)$ is the Cauchy process, as discussed in the previous example; see Figure~\ref{fig:brown}(a).

Extension to non-symmetric stable Lévy processes seems to be new. If $\alpha \in (0, 2)$, $p \ge 0$, $q \in \R$, $R = \infty$,
\formula{
 a(dy) & = p^2 y^{2/\alpha - 2} dy && \text{and} & b(y) & = -q y^{1/\alpha - 1} ,
}
then $Z(u)$ is an $\alpha$-stable Lévy process. More precisely, if $\alpha \ne 1$, $Z(u)$ has Lévy measure
\formula{
 \nu(dx) & = (C_+ \ind_{(0, \infty)}(x) + C_- \ind_{(-\infty, 0)}(x)) \, \frac{1}{|x|^{1 + \alpha}} \, dx ,
}
where the constants $C_+$ and $C_-$ can be determined using the formulae given in Section~4.6 in~\cite{k:hx}:
\formula{
  C_+ & = \biggl\lvert \frac{A}{2 \cos \tfrac{\alpha \pi}{2}} - \frac{B}{2 \sin \tfrac{\alpha \pi}{2}} \biggr\rvert , &  \qquad C_- & = \biggl\lvert \frac{A}{2 \cos \tfrac{\alpha \pi}{2}} + \frac{B}{2 \sin \tfrac{\alpha \pi}{2}} \biggr\rvert ,
}
with
\formula{
 A + i B & = \begin{cases}
  \displaystyle \frac{-\Gamma(-\alpha) \Gamma(\alpha + \tfrac{(1 - \alpha) (p - i q)}{2 p})}{\Gamma(\alpha) \Gamma(\tfrac{(1 - \alpha) (p - i q)}{2 p})} \, (2 \alpha p)^\alpha & \text{if $p > 0$,} \\[1.5em]
  \displaystyle \frac{-\Gamma(-\alpha)}{\Gamma(\alpha)} \, e^{-(i \pi \alpha/2) \sign (q (1 - \alpha))} |q \alpha (1 - \alpha)|^\alpha & \text{if $p = 0$.}
 \end{cases}
}
When $\alpha = 1$, the Lévy measure of $Z(u)$ is simply
\formula{
 \nu(dx) & = \frac{p}{\pi} \, \frac{1}{|x|^2} \, dx ,
}
and $q$ is the drift coefficient of $Z(u)$.

If $p = 0$, the diffusion $(X(t), Y(t))$ is degenerate, and so it aligns closely to a profile line. It only moves between different profile lines due to the apparent drift, and, if $\alpha \ge 1$, also due to oblique reflection.

If $p = 0$, $q > 0$ and $\alpha < 1$, then $Z(u)$ is the $\alpha$-stable subordinator; see Figure~\ref{fig:sub}. On the other hand, if $p = 0$, $q > 0$ and $\alpha > 1$, then $Z(u)$ is the one-sided $\alpha$-stable process, with no negative jumps; see Figure~\ref{fig:onesided}. The case $p = 0$, $q > 0$ and $\alpha = 1$ corresponds to the trivial drift process, discussed in the first example in this section; see Figure~\ref{fig:trivial}(b).

When $p > 0$, the diffusion $(X(t), Y(t))$ is non-degenerate, and the corresponding $\alpha$-stable Lévy process $Z(u)$ has two-sided jumps; see Figure~\ref{fig:mix}.

\subsection{Meromorphic Lévy processes}

If $a(dy)$ is a purely atomic measure with finitely many atoms, and $b(y)$ is a piece-wise constant function with finitely many jumps, then the equations in Theorem~\ref{thm:ode} can be solved explicitly, and the characteristic exponent $\Psi(\xi)$ turns out to be (the restriction to $\R$ of) a meromorphic function of $\xi \in \C$. Conversely, if $\Psi(\xi)$ is the characteristic exponent of a Lévy process with completely monotone jumps and $\Psi(\xi)$ extends to a meromorphic function of $\xi \in \C$, then $a(dy)$ and $b(y)$ have the structure described above. Furthermore, the corresponding atoms of $a(dy)$ and jumps of $b(y)$ are easily found by an appropriate continued fraction expansion of $\Psi(\xi)$; we omit the somewhat technical details, and refer to~\cite{kw} for a closely related calculation in the symmetric case.

The class of Lévy processes obtained in this way is known under the name \emph{meromorphic Lévy processes}; see~\cite{kkp}. Two examples are given in Figure~\ref{fig:phase}.

Note that the diffusion $(X(t), Y(t))$ is degenerate, and the profile lines are piece-wise linear. The process $(X(t), Y(t))$ moves between different profile lines only at the jump points of $b(y)$ and the atoms of $a(dy)$.

\subsection{Two more examples}

We conclude the list of examples with two more degenerate diffusions, which correspond to Lévy processes with two-sided jumps. Suppose that $R = \infty$,
\formula{
 a(dy) & = 0 dy && \text{and} & b(y) & = -\sin y .
}
Then the diffusion $(X(t), Y(t))$ aligns to profile lines, where $x - \cos y$ is constant, and moves between them at a rate given by the apparent drift $(\cos y, 0)$. Note that the apparent drift takes both positive and negative values, so that the trace process $Z(u)$ has two-sided jumps. It can be easily verified that $Z(u)$ has finite variation, the Lévy measure of $Z(u)$ is finite on $(-\infty, 0)$ and infinite on $(0, \infty)$, and that $Z(u)$ has zero drift (the profile lines are orthogonal to the boundary); see Figure~\ref{fig:phase}(a). We omit the details.

In a similar way, consider now $R = \infty$,
\formula{
 a(dy) & = 0 dy && \text{and} & b(y) & = -\cos y .
}
Again, the diffusion $(X(t), Y(t))$ aligns to profile lines with $x + \sin y$ constant, and it moves between them at a rate given by the apparent drift $(-\sin y, 0)$. The trace process $Z(u)$ has two-sided jumps, finite variation, the Lévy measure of $Z(u)$ is infinite on $(-\infty, 0)$ and finite on $(0, \infty)$, and since $b(0^+) = -1$, the process $Z(u)$ has a unit positive drift; see Figure~\ref{fig:phase}(b). Again we omit the details.

%
%

\bigskip
\subsection*{Acknowledgments}
I thank Sigurd Assing, Jacek Małecki and Jacek Mucha for inspiring discussions on the subject of the present article. I also thank Tadeusz Kulczycki, from whom I learned about the concept of the boundary trace of a diffusion.

%
%

%
%


\begin{thebibliography}{00}

\bibitem{applebaum}
D.~Applebaum,
\emph{L{\'e}vy Processes and Stochastic Calculus}.
Cambridge University Press, Cambridge, 2004.

\bibitem{ah}
S.~Assing, J.~Herman,
\emph{Extension Technique for Functions of Diffusion Operators: a stochastic approach}.
Preprint, 2019, arXiv:1910.12772.

\bibitem{as}
S.~Assing, W.M.~Schmidt,
\emph{Continuous Strong Markov Processes in Dimension One}.
Springer, 1998.

\bibitem{bo}
R.~Bañuelos, B.~Øksendal,
\emph{A stochastic approach to quasi-everywhere boundary convergence of harmonic functions}.
J.~Funct. Anal. 17 (1987): 13--27.

\bibitem{bcr}
I.~Benjamini, Z.-Q.~Chen, S.~Rohde,
\emph{Boundary trace of reflecting Brownian motions}.
Probab. Theory Related Fields 129(1) (2004): 1--17.

\bibitem{bertoin}
J.~Bertoin,
\emph{L\'{e}vy Processes}.
Cambridge Univ. Press, Melbourne, New York, 1996.

\bibitem{by}
Ph.~Biane, M.~Yor,
\emph{Valeurs principales associées aux temps locaux browniens} (in French).
Bull. Sci. Math. (2) 111(1) (1987): 23--101.

\bibitem{bs}
A.N.~Borodin, P.~Salminen,
\emph{Handbook of Brownian Motion -- Facts and Formulae}.
Birkhäuser, Basel, 2002.

\bibitem{cs}
L.~Caffarelli, L.~Silvestre,
\emph{An extension problem related to the fractional Laplacian}.
Comm. Partial Differential Equations 32(7) (2007): 1245--1260, DOI:10.1080/03605300600987306.

\bibitem{cms}
R.~Carmona, W.C.~Masters, B.~Simon,
\emph{Relativistic Schrödinger operators: Asymptotic behavior of the eigenfunctions}.
J.~Funct. Anal. 91(1) (1990): 117--142.

\bibitem{cfy}
Z.-Q.~Chen, M.~Fukushima, J.~Ying,
\emph{Traces of symmetric Markov processes and their characterizations}.
Ann. Probab. 34(3) (2006): 1052--1102.

\bibitem{cherny}
A.S.~Cherny,
\emph{Principal values of the integral functionals of Brownian motion: existence, continuity and an extension of Itô's formula}.
Séminaire de Probabilités (Strasbourg) 35 (2001): 348--370.

\bibitem{cinlar:1}
E.~Çinlar,
\emph{Markov additive processes, II}.
Z. Wahrscheinlichkeitstheorie verw. Gebiete 24 (1972): 84--121.

\bibitem{cinlar:2}
E.~Çinlar,
\emph{Lévy Systems of Markov Additive Processes}.
Z.~Wahrscheinlichkeitstheorie verw. Gebiete 31 (1975): 175--185.

\bibitem{di}
R.T.~Durrett, D.L.~Iglehart,
\emph{Functionals of Brownian Meander and Brownian Excursion}.
Ann. Probab. 5(1) (1977): 130--135.

\bibitem{dm}
H.~Dym, H.P.~McKean,
\emph{Gaussian Processes, Function Theory, and the Inverse Spectral Problem}.
Academic Press, New York-San Francisco-London, 1976.

\bibitem{ek}
J.~Eckhardt, A.~Kostenko,
\emph{The inverse spectral problem for indefinite strings}.
Invent. Math. 204 (2016): 939--977, DOI:10.1007/s00222-015-0629-1.

\bibitem{hsu}
P.~Hsu,
\emph{On excursions of reflecting Brownian motions}.
Trans. Amer. Math. Soc. 296(1) (1986): 239--264.

\bibitem{im}
K.~Itô, H.P.~McKean,
\emph{Diffusion Processes and their Sample Paths}.
Springer-Verlag, Berlin, 1974.

\bibitem{jk}
T.~Juszczyszyn, M.~Kwaśnicki,
\emph{Hitting times of points for symmetric Lévy processes with completely monotone jumps}.
Electron. J.~Probab. 20 (2015), no. 48: 1--24.

\bibitem{kk}
M.~Kac, M.G.~Krein,
\emph{On the spectral function of the string}.
Amer. Math. Soc. Transl. 2 103 (1974): 19--102.

\bibitem{kkm}
K.~Kaleta, M.~Kwaśnicki, J.~Małecki,
\emph{One-dimensional quasi-relativistic particle in the box}.
Rev. Math. Phys. 25(8) (2013) 1350014: 1--52.

\bibitem{kallenberg}
O.~Kallenberg,
\emph{Foundations of Modern Probability}.
Springer, New York, 2002.

\bibitem{ks}
I.~Karatzas, S.~Shreve,
\emph{Brownian Motion and Stochastic Calculus}.
Springer, 1998.

\bibitem{ksv}
P.~Kim, R.~Song, Z.~Vondraček,
\emph{On harmonic functions for trace processes}.
Math. Nachr. 284(14-15) (2011): 1889--1902.

\bibitem{knight}
F.B.~Knight,
\emph{Characterization of the Lévy Measures of Inverse Local Times of Gap Diffusion}.
In: E.~Çinlar, K.L.~Chung, R.K.~Getoor (eds.), \emph{Seminar on Stochastic Processes, 1981. Progress in Probability and Statistics, vol 1}.
Birkhäuser, Boston, 1981.

\bibitem{kolsrud}
T.~Kolsrud,
\emph{Traces of harmonic functions, capacities, and traces of symmetric Markov processes}.
J.~Theor. Probab. 2(4) (1989): 399--418.

\bibitem{kw}
S.~Kotani, S.~Watanabe,
\emph{Krein's spectral theory of strings and generalized diffusion processes}.
In: \emph{Functional Analysis in Markov Processes (Katata/Kyoto, 1981)},
Lecture Notes in Math. 923,
Springer, Berlin, 1982: 235--259.

\bibitem{kl}
M.G.~Krein, H.~Langer,
\emph{On some extension problems which are closely connected with the theory of Hermitian operators in a space $\Pi_\kappa$. III. Indefinite analogues of the Hamburger and Stieltjes moment problems. Part~I}.
Beiträge Anal. 14 (1979): 25--40.

\bibitem{kkp}
A.~Kuznetsov, A.E.~Kyprianou, J.C.~Pardo,
\emph{Meromorphic Lévy processes and their fluctuation identities}.
Ann. Appl. Probab. 22(3) (2012): 1101--1135.

\bibitem{kk}
A.~Kuznetsov, M.~Kwaśnicki,
\emph{Spectral analysis of stable processes on the positive half-line}.
Electron. J.~Probab. 23 (2018), no. 10: 1--29.

\bibitem{k:hl}
M.~Kwaśnicki,
\emph{Spectral analysis of subordinate Brownian motions on the half-line}.
Studia Math. 206(3) (2011): 211--271.

\bibitem{k:cm}
M.~Kwaśnicki,
\emph{Fluctuation theory for Lévy processes with completely monotone jumps}.
Electron. J. Probab. 24 (2019), no. 40: 1--40.

\bibitem{k:bell:1}
M.~Kwaśnicki,
\emph{A new class of bell-shaped functions}.
Trans. Amer. Math. Soc., in press, arXiv:1710.11023.

\bibitem{k:hx}
M.~Kwaśnicki,
\emph{Harmonic extension technique for non-symmetric operators with completely monotone kernels}.
Preprint, 2019, arXiv:1907.11444.

\bibitem{k:bell:2}
M.~Kwaśnicki,
\emph{Characterisation of the class of bell-shaped functions}.
Preprint, 2019, arXiv:1910.07752.

\bibitem{kmr}
M.~Kwaśnicki, J.~Małecki, M.~Ryznar,
\emph{First passage times for subordinate Brownian motions}.
Stoch. Proc. Appl. 123 (2013): 1820--1850.

\bibitem{km}
M.~Kwaśnicki, J.~Mucha,
\emph{Extension technique for complete Bernstein functions of the Laplace operator}.
J.~Evol. Equ. 18(3) (2018): 1341--1379, DOI:10.1007/s00028-018-0444-4.

\bibitem{lw}
H.~Langer, H.~Winkler,
\emph{Direct and inverse spectral problems for generalized strings}.
Integral Equ. Oper. Theory 30(4) (1998): 409--431.

\bibitem{ls}
E.H.~Lieb, R.~Seiringer,
\emph{The Stability of Matter in Quantum Mechanics}.
Cambridge University Press, 2010.

\bibitem{maisonneuve}
B.~Maisonneuve,
\emph{Exit systems}.
Ann. Probab. 3(3) (1975): 399--411.

\bibitem{molchanov1}
S.A.~Molchanov,
\emph{On a problem in the theory of diffusion processes} (in Russian).
Teor. Verojatnost. i Primenen 9 (1964): 523--528.

\bibitem{molchanov2}
S.A.~Molchanov,
\emph{On harmonic functions for certain terminating Markov processes} (in Russian).
Vestnik Moskov. Univ. Ser. I Mat. Meh. 1965(3) (1965): 44--46.

\bibitem{mo}
S.A.~Molchanov, E.~Ostrovskii,
\emph{Symmetric stable processes as traces of degenerate diffusion processes}.
Theor. Prob. Appl. 14(1) (1969): 128--131, DOI:10.1137/1114012.

\bibitem{mucha}
J.~Mucha,
\emph{Spectral theory for one-dimensional (non-symmetric) stable processes killed upon hitting the origin}.
Preprint, 2019, arXiv:arXiv:1910.12821.

\bibitem{ms}
B.~Muckenhoupt, E.~M.~Stein,
\emph{Classical expansions and their relation to conjugate harmonic functions}.
Trans. Amer. Math. Soc. 118 (1965): 17--92, DOI:10.1090/S0002-9947-1965-0199636-9.

\bibitem{py}
J.~Pitman, M.~Yor,
\emph{Hitting, Occupation and Inverse Local Times of One-Dimensional Diffusions: Martingale and Excursion Approaches}.
Bernoulli 9(1) (2003): 1--24.

\bibitem{ry}
D.~Revuz, M.~Yor,
\emph{Continuous Martingales and Brownian Motion}.
Springer-Verlag, Berlin, Heidelberg, 1999.

\bibitem{rogers}
L.C.G.~Rogers,
\emph{Wiener--Hopf factorization of diffusions and Lévy processes}.
Proc. London Math. Soc. 47(3) (1983): 177--191.

\bibitem{rogers:walsch}
L.C.G.~Rogers, J.B.~Walsch,
\emph{Local Time and Stochastic Area Integrals}.
Ann. Probab. 19(2) (1991): 457--482.

\bibitem{rw}
L.C.G.~Rogers, D.~Williams,
\emph{Diffusions, Markov processes, and Martingales. Vol. 1: Foundations}.
Cambridge University Press, Cambridge, 2000.

\bibitem{sato}
K.~Sato,
\emph{L{\'e}vy Processes and Infinitely Divisible Distributions}.
Cambridge University Press, Cambridge, 1999.

\bibitem{sharpe}
M.~Sharpe,
\emph{General Theory of Markov Processes}.
Academic Press, San Diego, 1988.

\bibitem{ssv}
R.~Schilling, R.~Song, Z.~Vondraček,
\emph{Bernstein Functions: Theory and Applications}.
Studies in Math. 37, De Gruyter, Berlin, 2012.

\bibitem{spitzer}
F.~Spitzer,
\emph{Some theorems concerning 2-dimensional Brownian motion}.
Trans. Amer. Math. Soc. 87 (1958): 187--197.

\bibitem{tanaka}
H.~Tanaka,
\emph{Note on continuous additive functionals of the 1-dimensional Brownian path}.
Z.~Wahrscheinlichkeitstheorie verw. Gebiete 1 (1963): 251--257.

\bibitem{walsch}
J.B.~Walsch,
\emph{Stochastic Integration with Respect to Local Time}.
In: E.~Çinlar, K.L.~Chung, R.K.~Getoor (eds.), \emph{Seminar on Stochastic Processes, 1982. Progress in Probability and Statistics, vol 5}.
Birkhäuser, Boston, 1983.

\bibitem{yy}
J.-Y.~Yen, M.~Yor,
\emph{Local Times and Excursion Theory for Brownian Motion}.
Springer, Cham, 2013.

\end{thebibliography}
\end{document}